\def\scaledpicture#1by#2(#3scaled#4){{
\dimen0=#1  \dimen1=#2
\divide\dimen0 by 1000 \multiply\dimen0 by #4
\divide\dimen1 by 1000 \multiply\dimen1 by #4
\picture \dimen0 by \dimen1 (#3 scaled #4)}}
\def\dfigure#1by#2(#3scaled#4offset#5:#6)
\def\qmod#1#2{{\hbox{}^{\displaystyle{#1}}}\!\big/\!\hbox{}_{
\displaystyle{#2}}}
\def\resto#1#2{{
#1\hskip 0.4ex\vline_{\hskip 0.2ex\raisebox{-0,2ex}
{{${\scriptstyle #2}$}}}}}
\def\C{{\mathbb C}}
\def\E{{\mathbb E}}
\def\F{{\mathbb F}}
\def\L{{\mathbb L}}
\def\N{{\mathbb N}}
\def\P{{\mathbb P}}
\def\R{{\mathbb R}}
\def\Z{{\mathbb Z}}
\def\union{\mathop{\bigcup}}
\def\qed {\hfill\vrule height6pt width6pt depth0pt \smallskip}
\def\map{\longrightarrow}
\def\textmap#1{\mathop{\vbox{\ialign{
                                  ##\crcr
      ${\scriptstyle\hfil\;\;#1\;\;\hfil}$\crcr
      \noalign{\kern 1pt\nointerlineskip}
      \rightarrowfill\crcr}}\;}}
\def\bigtextmap#1{\mathop{\vbox{\ialign{
                                  ##\crcr
      ${\hfil\;\;#1\;\;\hfil}$\crcr
      \noalign{\kern 1pt\nointerlineskip}
      \rightarrowfill\crcr}}\;}}
\newcommand{\cal}{\mathcal}
\def\textlmap#1{\mathop{\vbox{\ialign{
                                  ##\crcr
      ${\scriptstyle\hfil\;\;#1\;\;\hfil}$\crcr
      \noalign{\kern-1pt\nointerlineskip}
      \leftarrowfill\crcr}}\;}}
\def\cg{{\mathfrak c}}
\def\fg{{\mathfrak f}}
\def\g{{\mathfrak g}}
\def\ig{{\mathfrak i}}
\def\jg{{\mathfrak j}}
\def\wg{{\mathfrak w}}
\def\Fg{{\mathfrak F}}
\newtheorem{sz}{Satz}[section]
\newtheorem{thry}[sz]{Theorem}
\newtheorem{pr}[sz]{Proposition}
\newtheorem{re}[sz]{Remark}
\newtheorem{co}[sz]{Corollary}
\newtheorem{dt}[sz]{Definition}
\newtheorem{lm}[sz]{Lemma}
\begin{document}
 
\def\tr{\mathrm {Tr}}
\def\End{\mathrm {End}}
\def\Aut{\mathrm {Aut}}
\def\Spin{\mathrm {Spin}}
\def\U{\mathrm{U}}
\def\SU{\mathrm {SU}}
\def\SO{\mathrm {SO}}
\def\PU{\mathrm {PU}}
\def\GL{\mathrm {GL}}
\def\spin{\mathrm {spin}}
\def\u{\mathrm {u}}
\def\su{\mathrm {su}}
\def\so{\mathrm {so}}
\def\pu{\mathrm {pu}}
\def\Pic{\mathrm {Pic}}
\def\Iso{\mathrm {Iso}}
\def\NS{\mathrm{NS}}
\def\deg{\mathrm {deg}}
\def\Hom{\mathrm{Hom}}
\def\Herm{\mathrm{Herm}}
\def\Vol{{\rm Vol}}
\def\pf{{\bf Proof: }}
\def\id{ \mathrm{id}}
\def\Im{\mathrm{Im}}
\def\im{\mathrm{im}}
\def\rk{\mathrm {rk}}
\def\ad{\mathrm {ad}}
\def\spc{\mathrm{Spin}^c}
\def\niq{=\kern-.18cm /\kern.08cm}
\def\Ad{\mathrm {Ad}}
\def\RSU{\R\mathrm{SU}}
\def\ad{{\rm ad}}
\def\dva{\bar\partial_A}
\def\da{\partial_A}
\def\p{{\rm p}}
\def\sp{\Sigma^{+}}
\def\sm{\Sigma^{-}}
\def\spm{\Sigma^{\pm}}
\def\smp{\Sigma^{\mp}}
\def\oo{{\scriptstyle{\cal O}}}
\def\ooo{{\scriptscriptstyle{\cal O}}}
\def\sw{Seiberg-Witten }
\def\pa{\partial_A\bar\partial_A}
\def\Dr{{\raisebox{0.17ex}{$\not$}}{\hskip -1pt {D}}}
\def\gr{{\scriptscriptstyle|}\hskip -4pt{\g}}
\def\subsetint{{\  {\subset}\hskip -2.45mm{\raisebox{.28ex}
{$\scriptscriptstyle\subset$}}\ }}

\title[real Theta line bundles of Klein surfaces]
{Abelian Yang-Mills theory on Real tori and Theta divisors of Klein surfaces}
\author{Christian Okonek \& Andrei Teleman}
\address{Christian Okonek:  Institut f\"ur Mathematik, Universit\"at Z\"urich,
Winterthurerstrasse 190, CH-8057 Z\"urich, e-mail: okonek@math.uzh.ch}

\address{Andrei Teleman: 
CMI,   Aix-Marseille Universit\'e,  LATP, 39  Rue F. Joliot-Curie, F-13453
Marseille Cedex 13,   e-mail: teleman@cmi.univ-mrs.fr
}

 \begin{abstract} The purpose of this paper is to compute  determinant index bundles of certain families of Real Dirac type operators on Klein surfaces as elements in the corresponding Grothendieck  group of Real line bundles in the sense of Atiyah. On a Klein surface these determinant index bundles have a natural holomorphic description  as theta line bundles. In particular we compute the first Stiefel-Whitney classes of the corresponding fixed point bundles on the real part 
 of the Picard torus. The
computation of these classes is important, because they control to a large
extent the orientability of certain moduli spaces in Real gauge theory and Real algebraic geometry.
 
 \end{abstract} 
\thanks{The second author has been partially supported by the ANR project MNGNK, decision 
Nr.  ANR-10-BLAN-0118}
\maketitle

\tableofcontents

\setcounter{section}{-1}
\section{Introduction}

This paper is the first in a series in which we will develop a mathematical gauge theory in low dimensions in the presence of a Real structure in the sense of Atiyah \cite{A1}. In dimension 2 this will lead to a theory of gauged linear $\sigma$-models defined on Klein surfaces with values in symplectic quotients endowed with    Real structures. In dimension 4 this will yield  a Real version of Seiberg-Witten theory, which should have applications to the classification of Real algebraic surfaces.

It is well-known that one of the main issues  in connection  with Real structures is the orientability problem. In gauge theory orientability of moduli spaces is controlled by a numerical index and a determinant index bundle associated with a family of Dirac type operators. These determinant index bundles come with a natural Real structure in the sense of Atiyah, and the main problem is to determine their equivalence classes as elements in the Grothendieck group which classifies these Real bundles. In the present paper we will concentrate on the 2-dimensional case. In this situation the relevant determinant index bundles can be identified with certain natural theta line bundles of Klein surfaces.

Recall that a Klein surface is a pair $(C,\iota)$ consisting of a closed Riemann surface $C$ and an anti-holomorphic involution $\iota:C\to C$. The topological type of a Klein surface is determined by the triple $(g,r,a)$, where $g$ is the genus of $C$, $r$ the number of connected components  of the fixed point locus $C^\iota$, and $a$ is the orientation obstruction of the $\iota$-quotient, i.e., $a(C,\iota)=0$ when $C/\langle\iota\rangle$ is orientable  and   $a(C,\iota)=1$ when not. The Real structure $\iota$ induces a Real structure $\hat\iota:\Pic(C)\to\Pic(C)$  on the Picard group, given by $\hat\iota([{\cal L}]):=[\iota^*\bar{\cal L}]$.  The geometric theta divisor
$$\Theta:=\{[{\cal L}]\in\Pic^{g-1}(C)|\ h^0({\cal L})>0\}
$$
is $\hat\iota$-invariant and therefore defines a natural Real holomorphic line bundle ${\cal L}:={\cal O}_{\Pic^{g-1}(C)}(\Theta)$ on $\Pic^{g-1}(C)$.
There are two important families of Real holomorphic line bundles which one gets by translating $\Theta$ to $\Pic^0(C)$: One can either choose a Real theta characteristic $[\kappa]\in\Pic^{g-1}(C)$ and put 
$${\cal L}_{\kappa}:={\cal O}_{\Pic^0(C)}(\Theta-[\kappa])\ ,$$
 or, when $C^\iota\ne\emptyset$, one can choose a point $p_0\in C^\iota$ and define 
$${\cal L}_{p_0}:={\cal O}_{\Pic^0(C)} (\Theta-[{\cal O}_C((g-1)p_0)])\ .$$
 Both of these types  of theta line bundles are determinant index bundles of  families of perturbed Dirac operators associated with Spin- or Spin$^c$-structures. The underlying smooth Real line bundles $(L_\kappa,\tilde\iota_{{\cal L}_\kappa})$ and $(L_{p_0},\tilde \iota_{{\cal L}_{p_0}})$  define elements in the Grothendieck cohomology group \cite{G} $H^1_{\Z_2}(\Pic^0(C),\underline{S}^1(1))\simeq H^2_{\Z_2}(\Pic^0(C),\underline{\Z}(1))$ classifying Real line bundles on the Real torus $(\Pic^0(C),\hat\iota)$.

The cohomology group $H^1_{\Z_2}(\Pic^0(C),\underline{S}^1(1))$   comes with two natural morphisms:
$$c:H^1_{\Z_2}(\Pic^0(C),\underline{S}^1(1))\to H^2(\Pic^0(C),\Z)\ , $$ $$  w: H^1_{\Z_2}(\Pic^0(C),\underline{S}^1(1))\to H^1(\Pic^0(C)^{\hat \iota},\Z_2)\ ,
$$
defined by $c([L,\tilde\iota]):=c_1(L)$, $w([L,\tilde\iota]):= w_1(L^{\tilde\iota})$.

Whereas the first Chern classes $c_1({\cal L}_\kappa)$ and $c_1({\cal L}_{p_0})$  are well known, and can be calculated by the Atiyah-Singer index theorem for families, there is no analogous index theorem which would compute  the first Stiefel-Whitney classes of the corresponding fixed point bundles.  However, it is precisely these first Stiefel-Whitney classes $w([L_\kappa,\tilde\iota_{{\cal L}_\kappa}])$ and $w([L_{p_0},\tilde \iota_{{\cal L}_{p_0}}])$ which control the orientability of the corresponding moduli spaces.

Our strategy for computing $w([L_\kappa,\tilde\iota_{{\cal L}_\kappa}])$ and $w([L_{p_0},\tilde \iota_{{\cal L}_{p_0}}])$ is to first determine the Appell-Humbert data of ${\cal L}_\kappa$, then extract $w_1({\cal L}_\kappa^{\tilde\iota_\kappa})$ from these data and, in a third  step,  compare $(L_\kappa,\tilde\iota_{{\cal L}_\kappa})$ with $(L_{p_0},\tilde \iota_{{\cal L}_{p_0}})$. The final result is a completely explicit formula for $w_1({\cal L}_\kappa^{\tilde\iota_{{\cal L}_\kappa}})$ in terms of $w_1(\kappa^{\tilde\iota_\kappa})$, and for $w_1({\cal L}_{p_0}^{\tilde \iota_{{\cal L}_{p_0}}})$ in terms of the component of $C^\iota$ in which $p_0$ lies.

We illustrate the effectiveness  of our results by studying the simplest possible moduli spaces associated with $C$, i.e., the symmetric powers $S^d(C)$ of the curve itself. The induced Real structure on the moduli space $S^d(C)$ is the obvious one; its fixed locus $S^d(C)^{\iota}$ of $\iota$-invariant points decomposes as a disjoint union of several connected components, some of  which are orientable,  whereas others are not. The orientability of the different components is controlled by the first Stiefel-Whitney class of certain Real theta bundles, which can be computed using our explicit formulas alluded to above.  
\vspace{2mm}

Let us now briefly describe the content of the four sections of the article. 

In Section 1 we construct, using gauge theoretical techniques, two families of Dolbeault operators on a Riemann surface $C$, and we show that for a	 Klein surface $(C,\iota)$ with $C^\iota\ne\emptyset$ the corresponding determinant line bundles have natural $\hat\iota$-Real structures. The obtained $\hat\iota$-Real bundles can be identified with the underlying differentiable line bundles  of ${\cal L}_{p_0}$ and  ${\cal L}_{\kappa}$. At the end of the section we describe an important example of a Real gauged linear sigma model. The corresponding moduli spaces can be regarded as Uhlenbeck  type compactifications  of moduli spaces of sections in projective bundles over Klein surfaces. We also explain how the orientability of the real  part of the corresponding moduli spaces (which are Quot schemes defined over $\R$) is controlled by the first Stiefel-Whitney classes of certain  real  determinant line bundles on $\Pic^0(C)^{\hat\iota}$. The main problem here is to compute the relevant determinant line bundles as   Real holomorphic line bundles on $\Pic^0(C)$, in terms of the geometric theta divisor.

In section 2 we apply Grothendieck's formalism \cite{G} of equivariant sheaf cohomology to identify the set of isomorphism classes of  Real line bundles on a Real topological space  $(X,\iota)$ with the cohomology group   $H^1_{\Z_2}(X,\underline{S}^1(1))$. We obtain a fundamental short exact sequence for this cohomology group, which will allow us to compute it explicitly in several important cases, namely for a Klein surface and a Real torus. The section continues with an interesting general result  which   identifies the   isomorphism classes  of  Real line bundles on a compact Real Riemannian manifold $(X,\iota)$  with the connected components of the fixed point locus of the  induced involution on the  moduli spaces of Yang-Mills connections on $X$.  This  result has a  complex geometric version in which the Yang-Mills moduli space is replaced by the Picard group. 

The third section begins with with an important result  which describes the moduli space of Yang-Mills connections on a torus in terms of  generalized Appell-Humbert data. Generalized Appell-Humbert data are linear algebra  data which specify a unique representative in each gauge equivalence class of Yang-Mills connections. These linear algebra  data have a differential geometric interpretation  as the curvature and the holonomy along standard loops of the corresponding Yang-Mills connection. This is relevant for our purposes, because  the Stiefel-Whitney class of a real line bundle can be identified with the holonomy representation of an $\mathrm{O}(1)$-connection on it. 
The main result of the third section is a classification theorem for  Real line bundles $(L,\tilde\iota)$ on a Real torus   $(T,\iota)$, i.e., the explicit computation of $H^1_{\Z_2}(T,\underline{S}^1(1))$ in terms of characteristic classes. To the best of our knowledge this is the first explicit computation of such a Grothendieck group in a non-trivial case. A first step in the proof  is a   universal {\it difference formula} which describes the jump of the Stiefel-Whitney class of the restrictions of the fixed point line bundle $ L^{\tilde\iota}$ when one passes from one connected component  of  $T^\iota$  to another.

Section 4 concerns Real theta line bundles on complex tori. First we  specialize   our description of the abelian Yang-Mills moduli space to the case of a complex torus.  Using the Kobayashi-Hitchin correspondence and our explicit description we obtain a new proof of the classical Appell-Humbert theorem describing the Picard group of a complex torus. For our purposes the most important point is that we get a clear geometric understanding of the complex analytic data intervening in the canonical factor of automorphy which appear in the Appell-Humbert theorem. This allows us to compute the holonomy of abelian Hermite-Einstein connections along all standard loops. 
  The section continues with a subsection  
in which we compute the Appell-Humbert data which determine the relevant theta line bundles of Klein surfaces.  First we consider the Spin case, i.e., the symmetric theta line bundles ${\cal L}_\kappa$, where $\kappa$ is Real theta characteristic.  Using Riemann's singularity theorem we show that the Appell-Humbert data describing ${\cal L}_\kappa$ are given in terms of the intersection form of the curve and Mumford's theta form $q_\kappa$. Combining this with our previous results we obtain already an explicit formula for the Stiefel-Whitney class $w([{  L}_\kappa,\tilde\iota_{{\cal L}_\kappa}])$. The problem with this formula is that it involves  Mumford's theta form, which is an algebraic geometric object. Using results of Atiyah \cite{A2}, Johnson \cite{J}, Libgober \cite{L}, and a new geometric construction we  obtain a purely topological interpretation of the theta form $q_\kappa$ in terms of $w_1(\kappa^{\tilde\iota_\kappa})$\footnote{It has been brought to our attention by the referee, that this result, our Theorem \ref{main}, -- in the case of {\it effective} Real theta characteristics --  has been proved before by S. Natanzon. His proof -- using real Fuchsian groups and their liftings -- is nicely explained in his book \cite{N}. For the convenience of the reader we will summarize some of his results in our terminology after Corollary \ref{comessatti}.} In a final step we compare $w([{  L}_\kappa,\tilde\iota_{{\cal L}_\kappa}])$ and $w([L_{p_0},\tilde \iota_{{\cal L}_{p_0}}])$ to  obtain a corresponding formula in the $\spc$ case. These results determine explicitly the elements defined by ${\cal L}_\kappa$ and ${\cal L}_{p_0}$ in the Grothendieck group $H^1_{\Z_2}(\Pic^0(C),\underline{S}^1(1))$.

We  have included  an appendix in which we prove  a general $\Z_2$-localization formula which relates the Stiefel-Whitney numbers of a Real vector  bundle $(E,\tilde\iota)$ on a compact Real manifold  $(X,\iota)$ to  the Stiefel-Whitney classes of the real bundle $E^{\tilde \iota}$ over $X^\iota$ and the normal bundle of $X^\iota$ in $X$.

\section{Families of Dirac operators of Klein  surfaces }\label{famDirac}

\subsection{Families of  Dirac operators on a Riemann surface}

 \subsubsection{ Families of $\spc$-Dirac operators} Let $C$ be a compact Riemann surface of genus $g$. The spinor bundles of the {\it canonical} $\spc$-structure $\tau_{\rm can}$ on $C$ are $\Sigma^+_{\rm can}=\Lambda^{0}$, $\Sigma^-_{\rm can}=\Lambda^{0,1}$, and the canonical Dirac operator of $C$ associated with the canonical $\spc$-structure  is (up to the factor $\sqrt{2}$) just the Dolbeault operator
$$\bar\partial: A^{0} \map A^{0,1} \ .
$$
Using the canonical $H^2(C,\Z)$-torsor structure on the set of equivalence classes of $\spc$-structures on $C$ one obtains for any Hermitian line bundle $L$  on $C$ an $L$-twisted $\spc$-structure $\tau_L$ on $C$ whose spinor bundles are $\Sigma^+_L:=\Lambda^0(L)$, $\Sigma^-_L:=\Lambda^{0,1}(L)$. For the construction of a Dirac operator associated with $\tau_L$ one needs a  semi-connection  $\delta$ on $L$, and then the corresponding $\spc$-Dirac operator will be
$$\delta:A^0(L)\to A^{0,1}(L)\ .
$$
Varying $\delta$ in the space $\mathcal{A}^{0,1}(L)$ of semi-connections on $L$ one gets a tautological  family of elliptic operators parameterized by  $\mathcal{A}^{0,1}(L)$.

Let ${\cal G}^\C:=\mathcal{C}^\infty(C,\C^*)$ be the complex gauge group, which acts on $\mathcal{A}^{0,1}(L)$ from the right by $(\delta,g)\mapsto g^{-1}\circ\delta\circ g=\delta+g^{-1}\bar\partial g$. We are interested in descending this tautological family to the moduli space
$$\Pic^d(C)\simeq \qmod{{\cal A}^{0,1}(L)}{{\cal G}^\C}
$$
of holomorphic structures on $L$ (here $d:=\deg(L)$). The problem here is that the  group ${\cal G}^\C$ does not act freely on the  affine space ${\cal A}^{0,1}(L)$, so the trivial line bundles
$${\cal A}^{0,1}(L)\times A^0(L)\ ,\ {\cal A}^{0,1}(L)\times A^{0,1}(L)
$$
do not descend to $\Pic^d(C)$ in a natural way. In order to descend these bundles we choose a base point $p_0\in C$ and consider the reduced complex gauge group 
$${\cal G}_{p_0}^\C:=\{g\in{\cal G}^\C|\ g(p_0)=1\}\ .$$
Since this group acts freely on ${\cal A}^{0,1}(L)$ we get Fréchet vector bundles 
$$\mathscr{E}^0_{p_0}:={\cal A}^{0,1}(L)\times_{{\cal G}_{p_0}^\C} A^0(L)\ ,\ \mathscr{E}^1_{p_0}:={\cal A}^{0,1}(L)\times_{{\cal G}_{p_0}^\C} A^{0,1}(L)$$ 
over $\qmod{{\cal A}^{0,1}(L)}{{\cal G}^\C}$, and a universal family of Dolbeault operators
$$\delta_{p_0}^L:\mathscr{E}^0_{p_0}\to \mathscr{E}^1_{p_0}
$$
of index $d+1-g$ parameterized by $\Pic^d(C)$.

The determinant line bundle $\det\mathrm{ind}\ \delta_{p_0}^L$ has been extensively studied in the literature \cite{Q},   \cite{BGS1}, \cite{BGS2}, \cite{BGS3}. This line bundle has a natural holomorphic structure which can be described as follows. Consider the Poincaré line bundle 
$$\L_{p_0}:=\qmod{{\cal A}^{0,1}(L)\times L}{{\cal G}^\C_{p_0}}
$$
over $\Pic^d(C)\times C$. Then one has a canonical isomorphism
$$\det\mathrm{ind}\ \delta_{p_0}^L\simeq \det(R^0q_*(\L_{p_0}))\otimes\left[\det(R^1q_*(\L_{p_0})\right)]^\vee\ ,$$
where $q$ stands for the canonical projection $\Pic^d(C)\times C\to  \Pic^d(C)$ (see \cite{KM}, \cite{BGS1}, \cite{BGS2}, \cite{BGS3}), hence $\det\mathrm{ind}\ \delta_{p_0}^L$ has a natural holomorphic structure. Choosing a different base point $p_1$    yields a new Poincaré line bundle $\L_{p_1}$, and the two Poincaré line bundles are related by a formula of the form
$$\L_{p_1}=\L_{p_0}\otimes q^*({\cal M})\ ,
$$
where ${\cal M}$ is topologically trivial holomorphic line bundle on $\Pic^d(C)$. Using the projection formula for the functors  $R^iq_*$ one obtains  
$$\det\mathrm{ind}\ \delta_{p_1}^L\simeq \det\mathrm{ind}\ \delta_{p_0}^L\otimes {\cal M}^{\otimes(d+1-g)}\ .
$$
This shows in particular that $\det\mathrm{ind}\ \delta_{p_0}^L$ is independent of  the choice of $p_0$ when $d=g-1$; in this case $\det\mathrm{ind}\ \delta_{p_0}^L$ 
  has a canonical section whose zero locus is the geometric theta divisor  $\Theta\subset \Pic^{g-1}(C)$, hence one has
\begin{equation}\label{det-theta}\det\mathrm{ind}\ \delta_{p_0}^L\simeq{\cal O}_{\Pic^{g-1}(C)}(\Theta)\ ,
\end{equation}
independently of $p_0$.

It is easy to compare to determinant line bundles $\det\mathrm{ind}\ \delta_{p_0}^L$, $\det\mathrm{ind}\ \delta_{p_0}^{L'}$ associated with differentiable line bundles $L$, $L'$ of degrees $d$, $d'$. Let $P_0$ the underlying differentiable line bundle of ${\cal O}_C(p_0)$, and choose $L:=L'\otimes P_0^k$, where $k=d-d'$. For $L'$  we have a  Poincaré line bundle
$$\L'_{p_0}:=\qmod{{\cal A}^{0,1}(L')\times L'}{{\cal G}^\C_{p_0}} \ .
$$
We denote by $\delta_{p_0}$ the semi-connection on $P_0$ defining the holomorphic structure  of  ${\cal O}_C(p_0)$, and we define the isomorphism
$$\varphi_{p_0}:\Pic^{d'}(C)\to \Pic^{d}(C)
$$
by $\varphi_{p_0}([\delta']):=[\delta'\otimes \delta_{p_0}^{\otimes k}]$. One can easily check that
$$[\varphi_{p_0}\times\id_C]^*(\L_{p_0})\simeq \L'_{p_0}\otimes p^*({\cal O}_C(kp_0))\ ,
$$
where $p:\Pic^d(C)\times C\to C$ denotes the canonical projection. Note that 
$$p^*({\cal O}_C(kp_0))\simeq {\cal O}_{\Pic^d(C)\times C}(k\Sigma_{p_0})\ ,
$$
where $\Sigma_{p_0}:=\Pic^d(C)\times\{p_0\}$.  When $k\geq 0$  we tensor the short exact sequence
$$0\map {\cal O}_{\Pic^d(C)\times C}\map {\cal O}_{\Pic^d(C)\times C}(k\Sigma_{p_0})\map {\cal O}_{k\Sigma_{p_0}}(k\Sigma_{p_0})\map 0
$$
with $\L'_{p_0}$ and we write the corresponding long   exact sequence:
$$0\to R^0q_*(\L'_{p_0})\to R^0q_*([\varphi_{p_0}\times\id_C]^*\L_{p_0})\to\Pic^d(C)\times {\cal O}_{kp_0}\to
$$
$$\to R^1q_*(\L'_{p_0})\to R^1q_*([\varphi_{p_0}\times\id_C]^*\L_{p_0})\to 0\ .
$$
\\

For a holomorphic line bundle $\mathcal{M}$ of degree $m$ on $C$ and a fixed $s\in \Z$, we denote by $\tau_\mathcal{M}$ the {\it translation} by  $[\mathcal{M}]$   i.e., the isomorphism $\Pic^s(C)\to\Pic^{s+m}(C)$ defined by tensorizing with $\mathcal{M}$. It $\mathcal{D}$ is a divisor on $\Pic^{s+d}(C)$ we put $\mathcal{D}+[\mathcal{M}^\vee]:=\tau_\mathcal{M}^{-1}(\mathcal{D})$. Then one has 
$$\tau_\mathcal{M}^*(\mathcal{O}_{\Pic^{s+d}(C)}(\mathcal{D}))=\mathcal{O}_{\Pic^s(C)}(\mathcal{D}+[\mathcal{M}^\vee])\ ,
$$
and the isomorphism $\varphi_{p_0}:\Pic^{d'}(C)\to \Pic^{d}(C)$ introduced above is a translation $\tau_\mathcal{M}$ defined by a suitable line bundle $\mathcal{M}$.

\begin{lm} \label{ImpRed} Let $L$ be a differentiable line bundle of degree $d$ on $C$, and $p_0$ a base point. One has
$$\det\mathrm{ind}\ \delta_{p_0}^{L}\simeq {\cal O}_{\Pic^d(C)}(\Theta+[{\cal O}_C((d-g+1)p_0)])\ .$$
\end{lm}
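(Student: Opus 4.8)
The plan is to reduce to the already established case $d=g-1$, where $\det\mathrm{ind}\ \delta_{p_0}^{L'}={\cal O}_{\Pic^{g-1}(C)}(\Theta)$ by \eqref{det-theta}, and then transport this identification to $\Pic^d(C)$ along the isomorphism $\varphi_{p_0}$. Concretely, I would first treat the case $d\ge g-1$, take $d'=g-1$ and $L=L'\otimes P_0^k$ with $k=d-g+1\ge 0$, and use the short exact sequence already written down, namely $0\to \L'_{p_0}\to \L'_{p_0}(k\Sigma_{p_0})\to{\cal F}\to 0$ with ${\cal F}:=\L'_{p_0}\otimes{\cal O}_{k\Sigma_{p_0}}(k\Sigma_{p_0})$ and $\L'_{p_0}(k\Sigma_{p_0})\simeq[\varphi_{p_0}\times\id_C]^*\L_{p_0}$. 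Since the Knudsen--Mumford determinant of cohomology \cite{KM} is multiplicative on short exact sequences and compatible with the (flat) base change $\varphi_{p_0}$, this yields
$$\varphi_{p_0}^*\big(\det\mathrm{ind}\ \delta_{p_0}^{L}\big)\simeq \big(\det\mathrm{ind}\ \delta_{p_0}^{L'}\big)\otimes\det\big(R^0q_*{\cal F}\big)\otimes\big[\det\big(R^1q_*{\cal F}\big)\big]^\vee\ .$$

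The crux is to show that the ``jump term'' ${\cal F}$ contributes trivially. First, ${\cal F}$ is supported on the thickened section $k\Sigma_{p_0}$, whose fibres over $\Pic^{g-1}(C)$ are $0$-dimensional, so $R^1q_*{\cal F}=0$ and $R^0q_*{\cal F}$ is a rank-$k$ vector bundle; it therefore suffices to prove that $\det\big(R^0q_*{\cal F}\big)$ is holomorphically trivial. To compute this determinant I would filter ${\cal O}_{k\Sigma_{p_0}}$ by powers of the ideal of $\Sigma_{p_0}$: the graded pieces of ${\cal F}$, as sheaves on $\Sigma_{p_0}$, are the powers $N^{\otimes m}$ ($m=1,\dots,k$) of the normal bundle $N=N_{\Sigma_{p_0}}$, each tensored with $\resto{\L'_{p_0}}{\Sigma_{p_0}}$. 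Two triviality facts then make every graded piece trivial: (i) $N_{\Sigma_{p_0}}\simeq p^*T_{p_0}C$ is trivial, because $\Sigma_{p_0}=\Pic^{g-1}(C)\times\{p_0\}$ has normal direction the fixed line $T_{p_0}C$; and (ii) $\resto{\L'_{p_0}}{\Sigma_{p_0}}$ is canonically holomorphically trivial --- this is precisely the point of passing to the reduced gauge group ${\cal G}^\C_{p_0}$, since a $g$ with $g(p_0)=1$ acts trivially on the fibre $L'_{p_0}$. Hence $\det\big(R^0q_*{\cal F}\big)$ is a tensor product of trivial line bundles, and we conclude $\varphi_{p_0}^*\big(\det\mathrm{ind}\ \delta_{p_0}^{L}\big)\simeq {\cal O}_{\Pic^{g-1}(C)}(\Theta)$.

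It then remains to transport this along $\varphi_{p_0}$ and to read off the right-hand side of the lemma. Since $\varphi_{p_0}\colon\Pic^{g-1}(C)\to\Pic^d(C)$ is the translation $[{\cal M}]\mapsto[{\cal M}\otimes{\cal O}_C(kp_0)]$, applying $(\varphi_{p_0}^{-1})^*$ identifies $\det\mathrm{ind}\ \delta_{p_0}^{L}$ with the line bundle of the translated divisor $\varphi_{p_0}(\Theta)$, which is exactly what ${\cal O}_{\Pic^d(C)}(\Theta+[{\cal O}_C((d-g+1)p_0)])$ denotes. For the remaining range $d<g-1$ I would run the symmetric argument, writing instead $L'=L\otimes P_0^{-k}$ with $-k>0$ and comparing along the translation $\Pic^d(C)\to\Pic^{g-1}(C)$; the jump term is trivial by the identical filtration computation, and one obtains the same formula (now with a negative multiple of $p_0$). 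The main obstacle is precisely the triviality of $\det\big(R^0q_*{\cal F}\big)$: everything else is formal once multiplicativity of the determinant of cohomology is invoked, and the real content is recognizing that triviality of the normal bundle of the horizontal section together with the normalization of the Poincaré bundle along it suffice to kill the jump term.
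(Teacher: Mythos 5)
Your proposal is correct and follows essentially the same route as the paper: one tensors the short exact sequence for the thickened section $k\Sigma_{p_0}$ with $\L'_{p_0}$, takes direct images, and invokes multiplicativity and base-change functoriality of the determinant of cohomology, the paper simply writing the jump term as the trivial bundle $\Pic^d(C)\times{\cal O}_{kp_0}$ where you justify its triviality of determinant via the ideal-sheaf filtration, the triviality of the normal bundle of the horizontal section, and the normalization of the Poincar\'e bundle along $\Sigma_{p_0}$ coming from the reduced gauge group. Your explicit treatment of the range $d<g-1$ fills in a case the paper leaves implicit, but the argument is identical in substance.
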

 \begin{proof}
Using the functoriality of the functor $\det$ and the exact sequence above we get
$$ \varphi_{p_0}^*(\det\mathrm{ind}\ \delta_{p_0}^{L})\simeq \det\mathrm{ind}\ \delta_{p_0}^{L'}\ . 
$$
In  the special case $d'=g-1$ this yields
$$
\varphi_{p_0}^*(\det\mathrm{ind}\ \delta_{p_0}^{L})\simeq {\cal O}_{\Pic^{g-1}(C)}(\Theta)\ .
$$
If we denote by $\Theta+[{\cal O}_C((d-g+1)p_0)]$ the translate of the geometric theta divisor $\Theta\subset \Pic^{g-1}(C)$ by the point $[{\cal O}_C((d-g+1)p_0)]\in \Pic(C)$, the last isomorphism can be rewritten as $\det\mathrm{ind}\ \delta_{p_0}^{L}\simeq {\cal O}_{\Pic^d(C)}(\Theta+[{\cal O}_C((d-g+1)p_0)])$.
\end{proof}

In order to understand the line bundle $\det\mathrm{ind}\ \delta_{p_0}^{L}$ explicitly, we shall identify $\Pic^d(C)$ with the torus $\Pic^0(C)$ (which has an explicit description as the quotient $H^1(C,{\cal O})/H^1(C,\Z)$) using the isomorphism $\otimes {\cal O}_C(dp_0):\Pic^0(C)\to\Pic^d(C)$. We get
\begin{lm} \label{ImpRe} Via the identification $\tau_{ {\cal O}_C(dp_0)}:\Pic^0(C)\to\Pic^d(C)$ the line bundle $\det\mathrm{ind}\ \delta_{p_0}^{L}$ on $\Pic^d(C)$ corresponds to the line bundle ${\cal O}_{\Pic^0(C)}(\Theta-[{\cal O}_C((g-1)p_0)])$ on $\Pic^0(C)$.
\end{lm}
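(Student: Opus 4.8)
The plan is to reduce the statement to the preceding lemma together with a single translation computation on the Picard torus. By the previous lemma we already know that
$$\det\mathrm{ind}\ \delta_{p_0}^{L}\simeq {\cal O}_{\Pic^d(C)}(\Theta+[{\cal O}_C((d-g+1)p_0)])\ ,$$
so it suffices to understand how the right-hand side transforms under the identification $t:=\otimes {\cal O}_C(dp_0):\Pic^0(C)\to\Pic^d(C)$. The key observation is that $t$ is nothing but translation by the point $a:=[{\cal O}_C(dp_0)]\in\Pic^d(C)$, i.e. $t([\xi])=[\xi]+a$ in the additive notation on the Picard torus, so computing the line bundle on $\Pic^0(C)$ amounts to computing the pullback $t^*(\det\mathrm{ind}\ \delta_{p_0}^{L})$.

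For any translation $t_a$ on a complex torus and any divisor $D$ one has the standard identity $t_a^*({\cal O}(D))\simeq {\cal O}(D-a)$, since $t_a^{-1}(D)=D-a$. I would apply this with $D=\Theta+[{\cal O}_C((d-g+1)p_0)]$ and $a=[{\cal O}_C(dp_0)]$, and compute the translate
$$D-a=\Theta+[{\cal O}_C((d-g+1)p_0)]-[{\cal O}_C(dp_0)]=\Theta-[{\cal O}_C((g-1)p_0)]\ ,$$
where the last equality uses the additivity of the class map together with $[{\cal O}_C((d-g+1)p_0)]-[{\cal O}_C(dp_0)]=[{\cal O}_C(-(g-1)p_0)]$. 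Since $\Theta$ lives in $\Pic^{g-1}(C)$ and we subtract a degree-$(g-1)$ class, the resulting divisor sits in $\Pic^0(C)$ as required, and yields exactly ${\cal O}_{\Pic^0(C)}(\Theta-[{\cal O}_C((g-1)p_0)])$.

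There is no serious obstacle here; the entire content is the bookkeeping of degrees and the direction of the translation. The only points requiring care are to fix the convention for $t_a^*$ consistently, so that one subtracts rather than adds $a$, and to track that the translate of $\Theta\subset\Pic^{g-1}(C)$ by the degree-$(d-g+1)$ point $[{\cal O}_C((d-g+1)p_0)]$ indeed lands in $\Pic^d(C)$, so that after applying $t^*$ the divisor lands back in degree $0$. Once these conventions are pinned down, the claimed isomorphism is immediate.
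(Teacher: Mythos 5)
Your proof is correct and matches what the paper does: the paper states Lemma \ref{ImpRe} as an immediate consequence of the preceding lemma, the identification $\otimes{\cal O}_C(dp_0)$ being translation by $[{\cal O}_C(dp_0)]$, and the divisor arithmetic $[{\cal O}_C((d-g+1)p_0)]-[{\cal O}_C(dp_0)]=-[{\cal O}_C((g-1)p_0)]$ is exactly the bookkeeping you carry out. Nothing is missing.
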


In these constructions we considered families of $\spc$-Dirac operators obtained by coupling the canonical $\spc$-Dirac operator with holomorphic line bundles.  
\subsubsection{Families of Dirac operators associated with theta characteristics} A different point of view begins with the Dirac operator associated with a fixed $\Spin$-structure on $C$. A Riemann surface of genus $g$ has $2^{2g}$ equivalence classes of $\Spin$-structures; these classes correspond bijectively to isomorphism classes of theta characteristics, i.e.,  to square roots $\kappa$ of the canonical line bundle ${\cal K}_C$ \cite{A2}.

The spinor bundles corresponding to a theta characteristic $\kappa$ are 
$$S^+=\kappa\ ,\ S^-=\Lambda^{0,1}(\kappa)\simeq \kappa^\vee\ ,$$
 and the corresponding Dirac operator is (up to the factor $\sqrt{2}$) just the Dolbeault operator $\bar\partial_\kappa:A^0(\kappa)\to A^{0,1}(\kappa)$.
A second natural way to construct a family of Dirac operators is to consider  perturbations of this $\Spin$-Dirac operator by flat line bundles. 

Associated with any   form $\eta\in A^{0,1}$ we have  a perturbed   Dirac operator 
$$ \bar\partial_\kappa+\eta: A^0(\kappa)\to A^{0,1}(\kappa)\ .$$
Factorizing again by the reduced gauge group ${\cal G}^{\C}_{p_0}$, we obtain  Fréchet bundles
$$\mathscr{F}^0_{p_0}:= {A^{0,1}}\times_{{\cal G}^\C_{p_0}} A^0(\kappa)\ ,\ \mathscr{F}^1_{p_0}:= {A^{0,1}}\times_{{\cal G}^\C_{p_0}} A^{0,1}(\kappa)
$$
over the quotient 
$$\qmod{A^{0,1}}{{\cal G}^\C_{p_0}}\simeq \qmod{A^{0,1}}{\{g^{-1}\bar\partial g|\ g\in{\cal G}^\C\}}\simeq \Pic^0(C)\ ,$$
and  a family of operators 
$$\bar\partial_{\kappa,p_0}:\mathscr{F}^0_{p_0}\to \mathscr{F}^1_{p_0}\ 
$$
parameterized by  $\Pic^0(C)$.
Note that the 
family $\bar\partial_{\kappa,p_0}$ is just the pull-back of the family $\delta^L_{p_0}:\mathscr{E}^0_{p_0}\to \mathscr{E}^1_{p_0}$ via the isomorphism $[{\cal L}_0]\to [{\cal L}_0\otimes \kappa]$, where $L$ is the underlying differentiable line bundle of $\kappa$.
This proves
\begin{lm} \label{LemmaK} Let $\kappa$ be a theta characteristic on $C$, $p_0$ a base point. One has 
$$\det\mathrm{index}\ \bar\partial_{\kappa,p_0}\simeq{\cal O}_{\Pic^0(C)}(\Theta-[\kappa])\ .$$
\end{lm}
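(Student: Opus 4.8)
The plan is to derive Lemma \ref{LemmaK} by reducing it to the already-established Lemma \ref{ImpRe} via a base-change argument along the translation isomorphism induced by $\kappa$. The key observation, already recorded in the excerpt just before the statement, is that the perturbed Dirac family $\bar\partial_{\kappa,p_0}$ coincides with the pull-back of the twisted Dolbeault family $\delta^L_{p_0}$ under the translation map $t_\kappa:\Pic^0(C)\to\Pic^{\deg\kappa}(C)=\Pic^{g-1}(C)$ sending $[{\cal L}_0]\mapsto[{\cal L}_0\otimes\kappa]$, where $L$ is the underlying differentiable line bundle of $\kappa$, so that $\deg L=\deg\kappa=g-1$. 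The whole point is that a theta characteristic has degree $g-1$, which is precisely the distinguished degree in which the determinant index bundle is independent of the base point and equals ${\cal O}_{\Pic^{g-1}(C)}(\Theta)$ by \eqref{det-theta}.

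First I would spell out the identification $\bar\partial_{\kappa,p_0}\simeq t_\kappa^*(\delta^L_{p_0})$ at the level of families, checking that the affine space $A^{0,1}$ of perturbations $\eta$ (acting on $\bar\partial_\kappa$) maps equivariantly to the space ${\cal A}^{0,1}(L)$ of semi-connections by $\eta\mapsto\bar\partial_\kappa+\eta$, and that this intertwines the two reduced-gauge-group quotients compatibly with the spinor bundle data $\Sigma^\pm_L=\Lambda^{0,\bullet}(L)$. Because the determinant-index construction is functorial under such pull-backs, this immediately yields
$$\det\mathrm{index}\ \bar\partial_{\kappa,p_0}\simeq t_\kappa^*\bigl(\det\mathrm{ind}\ \delta^L_{p_0}\bigr).$$

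Next I would feed in Lemma \ref{ImpRe} (equivalently the degree-$(g-1)$ case of the Lemma preceding it): since $\deg L=g-1$, the line bundle $\det\mathrm{ind}\ \delta^L_{p_0}$ on $\Pic^{g-1}(C)$ is just ${\cal O}_{\Pic^{g-1}(C)}(\Theta)$, independently of $p_0$, by \eqref{det-theta}. Applying $t_\kappa^*$ and using that pulling back ${\cal O}(\Theta)$ under the translation by $\kappa$ produces the translate ${\cal O}_{\Pic^0(C)}(\Theta-[\kappa])$, I obtain
$$\det\mathrm{index}\ \bar\partial_{\kappa,p_0}\simeq t_\kappa^*\bigl({\cal O}_{\Pic^{g-1}(C)}(\Theta)\bigr)\simeq{\cal O}_{\Pic^0(C)}(\Theta-[\kappa]),$$
which is exactly the assertion.

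The main obstacle I anticipate is not the translation bookkeeping but the verification that the identification of families is genuinely canonical, i.e. that the Poincar\'e bundle $\L_{p_0}$ over $\Pic^{g-1}(C)\times C$ pulls back correctly under $t_\kappa\times\id_C$ to the perturbed family's tautological bundle, so that the determinant-index isomorphism is compatible with $t_\kappa^*$. One must confirm that no extra twist by a line bundle pulled back from $C$ intervenes (in contrast to the comparison in the earlier Lemma, where a factor $p^*({\cal O}_C(kp_0))$ did appear); here it should not, precisely because we are translating by $\kappa$ rather than changing the degree, and the base-point normalization via ${\cal G}^\C_{p_0}$ is preserved. Once this compatibility is checked, the holomorphic isomorphism in the statement follows formally from \eqref{det-theta} and the translation identity for ${\cal O}(\Theta)$.
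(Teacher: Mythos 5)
Your proposal is correct and follows essentially the same route as the paper: the paper's own (very brief) argument is precisely the observation that $\bar\partial_{\kappa,p_0}$ is the pull-back of $\delta^L_{p_0}$ under the translation $[{\cal L}_0]\mapsto[{\cal L}_0\otimes\kappa]$, combined with the identification $\det\mathrm{ind}\ \delta^L_{p_0}\simeq{\cal O}_{\Pic^{g-1}(C)}(\Theta)$ from \eqref{det-theta} in degree $g-1$. Your additional care about the compatibility of the Poincar\'e bundles and the absence of an extra twist is a reasonable elaboration of what the paper leaves implicit.
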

\subsection{Real determinant line bundles and theta bundles of Klein surfaces}
\label{realdet}
Let $(C,\iota)$ be a Klein surface with $C^\iota\ne\emptyset$. The Real structure $\iota$ induces a natural Real structure $\hat\iota$ on $\Pic(C)$ mapping $[{\cal L}]$ to $[\iota^*(\bar{\cal L})]$, which preserves each component 
$\Pic^d(C)$.  

The involution $\hat\iota$ can be  constructed with gauge theoretical methods in the following way. Fix a $\iota$-Real \cite{A1} line bundle $(L,\tilde\iota)$ of degree $d$ (one can take for instance the underlying differentiable line bundle of the line bundle associated with a $\iota$-invariant divisor of degree $d$).  By definition this means that $\tilde\iota$ is a differentiable fiberwise antilinear  involution of $L$ lifting $\iota$. This involution induces anti-linear involutions $\tilde\iota^*$ on the spaces of $L$-valued forms $A^{0,q}(L)$ acting by
$$\tilde\iota^*(\sigma)(x):=\tilde\iota (\sigma(\iota(x))) \ ,\ \tilde\iota^*(\alpha\otimes\sigma):=\overline{\iota^*}(\alpha)\otimes\tilde \iota^*(\sigma)\ \ \forall\alpha\in A^{0,q}(L)\ \forall \sigma\in\Gamma(L)\ .
$$
For a semi-connection $\delta\in{\cal A}^{0,1}(L)$ put
$\tilde\iota^*(\delta):=\tilde \iota^*\circ \delta\circ \tilde\iota^*$. Using the identity
$$\tilde\iota^*(\delta\cdot g)=\tilde\iota^*(\delta)\cdot\overline{\iota^*}(g)$$
we see that the map $\delta\mapsto \tilde\iota^*(\delta)$ induces an involution ${\cal A}^{0,1}(L)/{\cal G}^\C\to {\cal A}^{0,1}(L)/{\cal G}^\C$. Via the identification ${\cal A}^{0,1}(L)/{\cal G}^\C=\Pic^d(C)$ this involution coincides with $\hat \iota$, so it is independent of the choice of the $\iota$-Real structure $\tilde\iota$ on $L$. Taking $p_0\in C^\iota$ we see that the map $g\mapsto \overline{\iota^*}(g)$ leaves the subgroup ${\cal G}_{p_0}^\C\subset {\cal G}^\C$ invariant, and the product map 
$$\tilde \iota^*\times \tilde\iota:{\cal A}^{0,1}(L)\times L\longrightarrow{\cal A}^{0,1}(L)\times L$$ 
induces an anti-holomorphic $(\hat\iota\times\iota)$-Real structure on the Poincaré line bundle $\L_{p_0}$ over $\Pic^d(C)\times C$. Regarding $\hat\iota\times\iota$ as a biholomorphism $\overline{\Pic}^d(C)\times \overline{C}\to \Pic^d(C)\times C$ and using the functoriality  of   $\det(R^0(\cdot))\otimes\left[\det(R^1(\cdot)\right)]^\vee$ with respect to biholomorphic base-change, we obtain

\begin{re} \label{RealStr1} Choosing $p_0\in C^\iota$ we get a $\hat\iota$-Real structure on the determinant line bundle  $\det\mathrm{index}\ \delta^L_{p_0}$ which is anti-holomorphic with respect to its natural holomorphic structure.
\end{re} 

This Real structure can be explicitly described  fiberwise using the fiber identifications $\det\mathrm{index}\ \delta^L_{p_0}([\delta])=\wedge^{\max}H^0({\cal L}_\delta)\otimes \wedge^{\max}H^1({\cal L}_\delta)^\vee$: it is induced by the  anti-linear isomorphisms  
$$ H^0({\cal L}_\delta)\to H^0({\cal L}_{\tilde\iota^*(\delta)})\ ,\  H^1({\cal L}_\delta) \to   H^1({\cal L}_{\tilde\iota^*(\delta)}) 
$$
given by  the operators $\tilde\iota^*$ on the spaces   $A^0(L)$ and $A^{0,1}(L)$. Here we denoted by ${\cal L}_\delta$ the holomorphic line bundle defined by the semi-connection $\delta\in {\cal A}^{0,1}(L)$.

Note that  the geometric theta divisor $\Theta$ is  invariant under $\hat\iota$, so that ${\cal O}_{\Pic^{g-1}(C)}(\Theta)$ has an obvious anti-holomorphic $\hat\iota$-Real structure. The same holds for the line bundle $ {\cal O}_{\Pic^0(C)}(\Theta-[{\cal O}_C((g-1)p_0)])$ on $\Pic^0(C)$ when $p_0\in C^\iota$. It is easy to see that, in general, two anti-holomorphic Real  structures on the same holomorphic line bundle are congruent modulo $S^1$. Together with Remark \ref{RealStr1} it follows that the  Real structure constructed on  $\det\mathrm{index}\ \delta^L_{p_0}$ with the help of a Real structure $\tilde\iota$ on $L$,  is independent of $\tilde\iota$ up to a multiplicative constant in $S^1$.  Using the isomorphism (\ref{det-theta}) and Lemma \ref{ImpRe}  we obtain
\begin{re} \label{RealIso}   For $p_0\in C^\iota$ we have  isomorphisms of $\hat\iota$-Real line bundles 
$$\left\{ \tau_{ {\cal O}_C((d+1-g)p_0)}\right\}^*(\det\mathrm{ind}\ \delta_{p_0}^L)\simeq{\cal O}_{\Pic^{g-1}(C)}(\Theta)\ ,$$
$$ \left\{ \tau_{ {\cal O}_C(dp_0)}\right\}^*(\det\mathrm{ind}\ \delta_{p_0}^L)\simeq {\cal O}_{\Pic^0(C)}(\Theta-[{\cal O}_C((g-1)p_0)]).
$$
\end{re}

Fix now a $\iota$-Real  theta-characteristic $(\kappa,\tilde\iota_\kappa)$, i.e., a square root  $\kappa$ of ${\cal K}_C$ endowed with an anti-holomorphic $\iota$-Real structure  $\tilde\iota_\kappa$.  We will see (see Proposition \ref{FC}) that the set of isomorphism  classes of such pairs $(\kappa,\tilde\iota_\kappa)$ corresponds bijectively to the finite subset of $\Pic^{g-1}(C)$ consisting of $\hat\iota$-invariant square roots of $[{\cal K}_C]$.

\begin{re} \label{RealStr2} Choosing $p_0\in C^\iota$, and a $\iota$-Real  theta-characteristic $(\kappa,\tilde\iota_\kappa)$  we get a $\hat\iota$-Real structure on the determinant line bundle 
$\det\mathrm{index}\ \bar\partial_{\kappa,p_0}$ and an isomorphism 
$$\det\mathrm{index}\ \bar\partial_{\kappa,p_0}\simeq {\cal O}_{\Pic^0(C)}(\Theta-[\kappa])
$$
 of $\hat\iota$-Real line bundles on $\Pic^0(C)$. 
\end{re}

As we explained in the introduction, our first goal is to identify the underlying  differentiable line bundles of the  $\hat\iota$-Real determinant line bundles 
$$\left\{ \tau_{ {\cal O}_C(dp_0)}\right\}^*(\det\mathrm{ind}\ \delta_{p_0}^L)\simeq   {\cal O}_{\Pic^0(C)}(\Theta-[{\cal O}_C((g-1)p_0)])$$
$$ \det\mathrm{index}\ \bar\partial_{\kappa,p_0}\simeq  {\cal O}_{\Pic^0(C)}(\Theta-[\kappa])$$  on  $(\Pic^0(C),\hat\iota)$ as elements in the  cohomology group $H^1_{\Z_2}(\Pic^0(C),\underline{S}^1(1))$, and in particular to compute the Stiefel-Whitney class  of the associated fixed point   bundles over  $\Pic^0(C)^{\hat\iota}$. Note that $\Pic^0(C)^{\hat\iota}$ is a disjoint  union of a finite family of real sub-tori of $\Pic^0(C)$ parameterized by the quotient $H^1(C,\Z)^{\iota^*}/(\id+\iota^*)H^1(C,\Z)$ (see section \ref{examples}). \vspace{3mm}

\subsection{Real gauged linear sigma models}\label{RGLSM}

The goal of this section is to describe briefly an example of the Real version of the theory of gauge theoretical Gromov-Witten invariants \cite{OT2}, which can be considered as generalizations of Witten's gauged linear sigma models.

We start by fixing    topological data $(\Sigma,M,E_0)$, where $\Sigma$ is a closed, connected, oriented 2-manifold, $M$ a Hermitian line bundle on $\Sigma$, and $E_0$ a Hermitian vector bundle of rank $r_0$ on $\Sigma$. The corresponding configuration space is 
$$\mathcal{A}(M,E_0):=\mathcal{A}(M)\times A^0(\Hom(M,E_0))\ ,$$
 where $\mathcal{A}(M)$ denotes the space of Hermitian connections on $M$.  The gauge group  $\mathcal{G}:=\mathcal{C}^\infty(\Sigma,S^1)$  acts  on $\mathcal{A}(M,E_0)$ by $(A,\varphi)g=(g^*(A),\varphi\circ g)$.  We denote by $\mathcal{B}(M,E_0)$ the quotient $\mathcal{A}(M,E_0)/\mathcal{G}$. Now we choose continuous parameters $(g,A_0)$, where $g$ is Riemannian metric on $\Sigma$, and $A_0$ a Hermitian connection on $E_0$.  The $(0,1)$-component $\bar\partial_{A_0}$  of $d_{A_0}$  defines a holomorphic structure $\mathcal{E}_0$ on $E_0$. Note  that $g$ together with the fixed orientation of $\Sigma$ defines a complex structure $J$ on $\Sigma$. We denote by $C$ the corresponding Riemann surface. The vortex type equation associated with the continuous parameters $(g,A_0,t)$ is 
$$
\left\{\begin{array}{ccc}
\bar\partial_{A,A_0} \varphi&=&0\ \ \\
i\Lambda_g F_A-\frac{1}{2} \varphi^*\circ\varphi&=&-t\id_L\ . 
\end{array}\right. \eqno{(V_t)}
$$
These are $\mathcal{G}$-equivariant equations for a pair $(A,\varphi)\in \mathcal{A}(M,E_0)$.  

Denote by  $\mathcal{M}_t(M,E_0,A_0)$  the moduli space  of gauge equivalence classes of solutions, and by $\mathcal{M}_t^*(M,E_0,A_0)$ the open subspace of irreducible solutions. Let $Quot_{\mathcal{E}_0}^M$ be the Quot space of quotients of $\mathcal{E}_0$ with  kernel of topological type $M$. This Quot space has the structure of a complex projective variety.

One of the main results  from \cite{OT2} is
\begin{thry} When $t\ne -\frac{2\pi}{\Vol_g(\Sigma)}\deg(M)$,  one has $\mathcal{M}_t(M,E_0,A_0)=\mathcal{M}_t^*(M,E_0,A_0)$ and there is a natural isomorphism
$$\mathcal{M}_t(M,E_0,A_0)\textmap{I}\left\{\begin{array}{ccc}
\emptyset&\rm if& t<-\frac{2\pi}{\Vol_g(\Sigma)}\deg(M)\ \ \\
Quot^M_{\mathcal{E}_0}&\rm if& t>-\frac{2\pi}{\Vol_g(\Sigma)}\deg(M)\ .
\end{array}
\right.
$$
\end{thry}

Now fix a {\it Real structure} on the pair $(C,E_0)$ and suppose that our continuous parameters $(g,A_0)$ are also Real. More precisely, we fix a real structure $\iota$ on $C$ and a $\iota$-Real structure $\tilde\iota_0$ on $E_0$. We require that $g$ is $\iota$-invariant and  $A_0$ is a   Real Hermitian connection  (i.e., the horizontal distribution of $A_0$ is invariant under the diffeomorphism induced by $\tilde\iota_0$ on the principal $\U(r_0)$-bundle $P_{E_0}$ of $E_0$.)  Then $(C,\iota)$ is a Klein surface, and $(\mathcal{E}_0,\tilde\iota_0)$ becomes a Real holomorphic bundle on $(C,\iota)$. Therefore the Quot space $Quot_{\mathcal{E}_0}^M$ inherits the structure of a projective variety defined over $\R$.

\begin{thry} Let $(C,\iota)$ be a Klein surface endowed with an $\iota$-invariant Riemannian metric $g$, $(E_0,\tilde\iota_0)$ a Real Hermitian vector bundle of rank $r_0$, and $M$ a Hermitian line bundle on $C$. Fix  be a Real Hermitian connection  $A_0$ on $E_0$. Then $\iota$ induces an involution 
$$\hat\iota:\mathcal{B}(M,E_0)\to \mathcal{B}(M,E_0)
$$
leaving the moduli space $\mathcal{M}_t(M,E_0,A_0)$ invariant. For $t>-\frac{2\pi}{\Vol_g(\Sigma)}\deg(M)$, the isomorphism $I:\mathcal{M}_t(M,E_0,A_0)\to 
Quot_{\mathcal{E}_0}^M$  is an isomorphism of Real complex spaces. The fixed point locus $\mathcal{M}_t(M,E_0,A_0)^{\hat\iota}$ can therefore be identified with the subspace of real points of the projective variety
$Quot_{\mathcal{E}_0}^M$. 
\end{thry}
\begin{proof} The involution $\hat\iota: \mathcal{B}(M,E_0)\to \mathcal{B}(M,E_0)$ is constructed in the following way. Since line bundles on a Riemann surface are classified by their first Chern class and one obviously has
$$\iota^*(c_1(M))=-c_1(L)=c_1(\bar M))\ ,$$
there exists an isomorphism $\iota^*(M)\simeq \bar  M$, i.e.,  there exists a $\iota$-covering anti-linear bundle isomorphism $f:M\to M$. We put
$$f^*(A,\varphi):=(f^*(A),\tilde\iota_0\circ\varphi\circ f)\ ,
$$
where $f^*(A)$ is the direct image of the connection $A$ under the $\iota$-covering bundle isomorphism $P_M\to P_M$  of type $\stackrel{-}{}\ :\U(1)\to\U(1)$ defined by $f^{-1}$. Since $f\circ f\in\mathcal{G}$ and   
$$f^*(f^*(A,\varphi))=((f\circ f)^*A,\varphi\circ(f\circ f))\ ,$$
 $f^*$ induces an involution $\hat\iota:\mathcal{B}(M,E_0)\to \mathcal{B}(M,E_0)$.  This involution $\hat\iota$   depends only on $\iota$ and not on the choice of $f$, because $f$ is well defined up to right composition with a gauge transformation. Since the second  equation of $(V_t)$ is invariant  and the first equation is  equivariant with respect to $f^*:\mathcal{A}(M,E_0)\to \mathcal{A}(M,E_0)$,  $\hat\iota$ leaves the moduli space $\mathcal{M}_t(M,E_0,A_0)$ invariant as claimed.
Suppose now that $t>-\frac{2\pi}{\Vol_g(\Sigma)}\deg(M)$. The   involution $c:Quot_{\mathcal{E}_0}^M\to Quot_{\mathcal{E}_0}^M$     which defines the natural real structure of this Quot space,  is given as follows: The locally free $\mathcal{O}_C$-sheaf  (associated  with) $\mathcal{E}_0$ is naturally a $\Z_2$-sheaf (in the sense of \cite{G}) on the Klein surface $(C,\iota)$ via the formula
$$\ig_0(s):=\tilde \iota_0\circ s\circ\iota$$
 for  any open set $U\subset C$ and any holomorphic section $s\in \Gamma(U,\mathcal{E}_0)$. Then, for a coherent subsheaf  $\mathcal{K}\subset \mathcal{E}_0$ we have
$$c\left(\left[\mathcal{E}_0\to \qmod{\mathcal{E}_0}{\mathcal{K}}\right]\right)=\left[\mathcal{E}_0\to \qmod{\mathcal{E}_0}{\ig_0(\mathcal{K})}\right]\ .
$$

On the other hand, by the definition of $I$, the quotient associated with a vortex $(A,\varphi)$ is the quotient $[\mathcal{E}_0\to \mathcal{E}_0/\varphi(\mathcal{M}_A)]$, where $\mathcal{M}_A$ is the rank 1 locally free sheaf associated with  the holomorphic structure defined by $\bar\partial_A$ on $M$. It suffices to note that
$$(\tilde\iota_0\circ\varphi\circ f)(\mathcal{M}_{f^*(A)})=\ig_0(\varphi(\mathcal{M}_A))\ .
$$
\end{proof}

We will need a generalization of Lemmata  \ref{ImpRed}, \ref{ImpRe} (and of their Real analogon Remark \ref{RealIso}) for the determinant line bundle 
$$\det\mathrm{ind}\ \delta_{p_0}^{L,\mathcal{E}_0}\simeq \det(R^0q_*(\L_{p_0}\otimes p^*(\mathcal{E}_0)))\otimes\left[\det(R^1q_*(\L_{p_0}\otimes p^*(\mathcal{E}_0))\right)]^\vee   $$
on $\Pic^d(C)$, where $\L_{p_0}$ is the Poincaré line bundle associated with a differentiable line bundle $L$ of degree $d$ on $C$ and a point $p_0\in C$, and  $\mathcal{E}_0$ is a holomorphic vector bundle of rank $r_0$ and degree $e_0$ on $C$. The maps $p$, $q$ are the projections
$$p:\Pic^d(C)\times C\to C\ ,\ q:\Pic^d(C)\times C\to \Pic^d(C)\ .$$
When $r_0=1$, one can use Lemma  \ref{ImpRed}  and the functoriality of the determinant line bundle with respect to isomorphic base changes to see that there exists a canonical isomorphism
$$
\det\mathrm{ind}\ \delta_{p_0}^{L,\mathcal{E}_0}= \tau_{\mathcal{E}_0}^*(\det\mathrm{ind}\ \delta_{p_0}^{L\otimes E_0}) \simeq $$
$${\cal O}_{\Pic^d(C)}(\Theta+[{\cal O}_C((d-g+1)p_0)\otimes\mathcal{E}_0^\vee\otimes\mathcal{O}_C(e_0p_0)])\ .
$$
For the general situation we have
\begin{pr} On has   canonical isomorphisms 
$$\det\mathrm{ind}\ \delta_{p_0}^{L,\mathcal{E}_0}=\mathcal{O}_{\Pic^d(C)}(\Theta+[\mathcal{O}_C((d+1-g)p_0)])^{\otimes (r_0-1)}\otimes$$
$$ \ \ \ \ \ \ \ \ \ \mathcal{O}_{\Pic^d(C)}(\Theta+[\mathcal{O}_C((d+1-g)p_0)\otimes (\det \mathcal{E}_0)^\vee\otimes\mathcal{O}_C(e_0 p_0)]) \ .
$$
When $p_0\in C^\iota$ and $\mathcal{E}_0$ is a Real holomorphic bundle, then  this canonical isomorphism is an isomorphism of Real holomorphic line bundles.
\end{pr}
\begin{proof} The statement has  been proved for $r_0=1$. If $r_0>0$ we proceed by induction with respect to $r_0$. The bundle $\mathcal{E}_0$ fits into a short exact sequence of vector bundles
$$0\to \mathcal{K}_0\to \mathcal{E}_0\to \mathcal{F}_0\to 0\ ,
$$
with   $\mathcal{K}_0$ of rank 1. Hence  one has  
$$\det\mathrm{ind}\ \delta_{p_0}^{L,\mathcal{E}_0}=\det\mathrm{ind}\ \delta_{p_0}^{L,\mathcal{K}_0}\otimes \det\mathrm{ind}\ \delta_{p_0}^{L,\mathcal{F}_0}\ .$$
Using the holomorphic line bundles  
$$\mathcal{L}=\mathcal{O}_{\Pic^{g-1}}(\Theta),\  \mathcal{P}:=\mathcal{O}_C((g-1-d)p_0)  ,\ $$
$$\mathcal{V}:= \mathcal{K}_0\otimes\mathcal{O}_C(-k_0p_0) ,\  \mathcal{W}:=(\det\mathcal{F}_0)\otimes\mathcal{O}_C(-f_0p_0)
$$
with $k_0:=\deg(\mathcal{K}_0)$, $f_0:=\deg(\mathcal{F}_0)$, we can write
$$\det\mathrm{ind}\ \delta_{p_0}^{L,\mathcal{E}_0}= \tau_{\mathcal{P}\otimes\mathcal{V}}^*(\mathcal{L})\otimes \tau_{\mathcal{P}}^*(\mathcal{L})^{\otimes (r_0-2)}\otimes \tau_{\mathcal{P}\otimes\mathcal{W}}^*(\mathcal{L})\ .
$$
It suffices to note that  $\mathcal{V}\otimes \mathcal{W}=(\det\mathcal{E}_0)\otimes\mathcal{O}_C(-e_0p_0)$ and that
$$\tau_{\mathcal{P}\otimes\mathcal{V}}^*(\mathcal{L})\otimes \tau_{\mathcal{P}\otimes\mathcal{W}}^*(\mathcal{L})=\tau_{\mathcal{P}}^*(\tau_{\mathcal{V}}^*(\mathcal{L})\otimes \tau^*_\mathcal{W}(\mathcal{L}) )=\tau_\mathcal{P}^*(\tau_{\mathcal{V}\otimes \mathcal{W}}(\mathcal{L}))\otimes\tau_\mathcal{P}^*(\mathcal{L})\ .
$$
The last equality follows from the theorem of the square (see Theorem 2.3.3 \cite{BL}). The arguments generalize easily to the Real case.
\end{proof}
\begin{re}\label{newdet} In the special  case  $\det \mathcal{E}_0 =\mathcal{O}_C(e_0p_0)$ one obtains canonical isomorphism
$$\det\mathrm{ind}\ \delta_{p_0}^{L,\mathcal{E}_0}=\mathcal{O}_{\Pic^d(C)}(\Theta+[\mathcal{O}_C((d+1-g)p_0)])^{\otimes  r_0}\ ,
$$
$$\tau_{\mathcal{O}_C(dp_0)}^*(\det\mathrm{ind}\ \delta_{p_0}^{L,\mathcal{E}_0})=\mathcal{O}_{\Pic^0(C)}(\Theta+[\mathcal{O}_C((1-g)p_0)])^{\otimes  r_0}\ ,
$$
which are isomorphisms of Real holomorphic line bundles when $(\mathcal{E}_0,\tilde\iota_0)$ is a Real holomorphic vector bundle and $p_0\in C^\iota$.
\end{re}
\vspace{3mm}

Let   $\mathcal{Q}uot^M_{\mathcal{E}_0}(\R):=\{\mathcal{Q}uot^M_{\mathcal{E}_0}\}^c$ be the subspace of real points in the Quot space $\mathcal{Q}uot^M_{\mathcal{E}_0}$. Put $L:=M^\vee$, $d:=\deg(L)$. We will study the   orientability of the components of $ \mathcal{Q}uot^M_{\mathcal{E}_0}(\R)$ and we will show that, at least for sufficiently large $d$,  this problem can be reduced to the computation  of  the first Stiefel-Whitney class of  the  real line bundle associated with the Real holomorphic line bundle  $\det\mathrm{ind}\ \delta^{L,\mathcal{E}_0}_{p_0}$ on $\Pic^{d}(C)$,  or equivalently  its translate $\tau_{\mathcal{O}_C(d p_0)}^*(\det\mathrm{ind}\ \delta^{L,\mathcal{E}_0}_{p_0})$ on $\Pic^0(C)$. 

The Quot space $\mathcal{Q}uot^M_{\mathcal{E}_0}$ can be regarded as a projective fibration over $\Pic^{d}(C)$ via the map $\pi:\mathcal{Q}uot^M_{\mathcal{E}_0}\to\Pic^{d}(C)$ defined by 
$$\pi([q:\mathcal{E}_0\to Q]):=\ker(q)^\vee\ .$$
 Via the identification $I:\mathcal{M}_t(M,E_0,A_0)\to \mathcal{Q}uot^M_{\mathcal{E}_0}$ the map $\pi$ is simply given  by $\pi([A,\varphi]):=[\mathcal{M}_{\bar\partial_A}^\vee]$. The fiber over a point $[{\cal L}]\in\Pic^{d}(C)$ is the projective space $\P(H^0({\cal L}\otimes \mathcal{E}_0))$. Suppose now  that 
\begin{equation}\label{negative} d>\max (-\mu(\mathcal{E}_0)+2(g-1),\mu_{\max}(\mathcal{E}_0)(r_0-1)-\mu(\mathcal{E}_0) r_0 +2(g-1))\ ,\end{equation}
where 
$$\mu_{\max}(\mathcal{E}_0): =\sup\{\mu(\mathcal{F})|\ \mathcal{F}\subset \mathcal{E}_0\hbox{ a nontrivial subsheaf}\} \ .$$
Then $h^1({\cal L}\otimes \mathcal{E}_0)=0$ and  $h^0({\cal L}\otimes \mathcal{E}_0)=(e_0+r_0d)+r_0(1-g)$ for every holomorphic line bundle $\mathcal{L}$ of degree $d$ (see \cite{BDW}). 

  Fixing a Real structure on $L$ (or equivalently on $M$) we obtain  an induced Real structure on the Poincaré line bundle $\L_{p_0}$, hence  $\L_{p_0}\otimes p^*(\mathcal{E}_0)$ and $R^0q_*(\L_{p_0}\otimes p^*(\mathcal{E}_0))$ become   Real  holomorphic bundles  on   $\Pic^{d}(C)\times C$ and $\Pic^{d}(C)$ respectively.

We identify $\mathcal{Q}uot^M_{\mathcal{E}_0}$ with the projectivization   $\P(\E_0)$ of the holomorphic  bundle $\E_0$  on $\Pic^{d}(C)$ which is associated with the locally free sheaf  $R^0q_*(\L_{p_0}\otimes p^*(\mathcal{E}_0))$.  
 It is easy to see that the involution $c:{Q}uot^M_{\mathcal{E}_0}\to {Q}uot^M_{\mathcal{E}_0}$ is induced by the Real structure $\ig_0$ of $\E_0$. The fixed point bundle $\F_0:=\E_0^{\ig_0}$ is a real vector  bundle of rank $(e_0+r_0 d)+r_0(1-g)$ over $\Pic^d(C)^{\hat\iota}$, and one obtains a natural identification $\mathcal{Q}uot^M_{\mathcal{E}_0}(\R) \simeq\P_\R(\F_0)$. The relative Euler  sequence associated with the projective fiber bundle $q_0:\P_\R(\F_0)\to\Pic^{d}(C)^{\hat\iota}$ reads
$$0\to \P_\R(\F_0)\times\R\to q_0^*(\F_0)(\lambda^\vee)\to T_{q_0}\to 0 \ .
$$
Here $T_{q_0}\subset T_{\P_\R(\F_0)}$ stands for the vertical tangent  bundle of $q_0$, and $\lambda$ denotes the tautological line bundle on $\P_\R(\F_0)$. Note that all connected components  of   
$\Pic^{d}(C)^{\hat\iota}$ are tori, hence they are orientable. Therefore
\begin{equation}\label{w_1}
w_1(T_{\P_\R(\F_0)})=w_1(T_{q_0})=w_1(q_0^*(\F_0)(\lambda^\vee))=$$
$$=q_0^*(w_1(\F_0))+((e_0+r_0d)+r_0(1-g))w_1(\lambda^\vee) \ .
\end{equation}
 Note that  $w_1(\lambda^\vee) \not\in q_0^*(H^1(\Pic^{-d}(C)^{\hat\iota},\Z_2))$ for $(e_0+r_0d)+r_0(1-g)>1$.

On the other hand
$$w_1(\F_0)=w_1(\wedge^{d+1-g} (\F_0))=w_1((\det\mathrm{ind}\ \delta_{p_0}^{L,\mathcal{E}_0})^{\tilde{\hat\iota}})\ ,
$$
where $\tilde{\hat\iota}$ denotes the canonical $\hat\iota$-Real structure on the determinant line bundle $\det\mathrm{ind}\ \delta_{p_0}^{L,\mathcal{E}_0}$, given by Remark \ref{RealStr1}.   Using formula (\ref{w_1})  we obtain:
\begin{pr} \label{OrSymm} Let $(C,\iota)$ be a Klein surface, $(\mathcal{E}_0,\tilde\iota_0)$ a  Real holomorphic vector bundle of rank $r_0$ on $C$, $p_0\in C^\iota$  a fixed point of $\iota$, and  suppose $d\in\Z$ satisfies   $d>-\mu(\mathcal{E}_0)+(g-1)+\frac{1}{r_0}$  and (\ref{negative}). Regard  $\mathcal{Q}uot^M_{\mathcal{E}_0}(\R)$ as a real projective bundle over $\Pic^{d}(C)^{\hat\iota}$ via $\pi$. Let $T\subset \Pic^d(C)^{\hat\iota}$ be a connected component of $\Pic^d(C)^{\hat\iota}$ and $\{\mathcal{Q}uot^M_{\mathcal{E}_0}(\R)\}_T$ the corresponding component of $\mathcal{Q}uot^M_{\mathcal{E}_0}(\R)$. Then $\{\mathcal{Q}uot^M_{\mathcal{E}_0}(\R)\}_T$ is orientable if and only if $(e_0+r_0d)+r_0(1-g)$ is even and 
$$w_1\left(\resto{(\det\mathrm{ind}\ \delta_{p_0}^{L,\mathcal{E}_0})^{\tilde{\hat\iota}}}{T}\right)=0\ .
$$
\end{pr}
Note that the last condition depends effectively on the component $T$ (see Proposition \ref{difference}, section \ref{RYMT}). 

Suppose for simplicity that $\det(\mathcal{E}_0)\simeq \mathcal{O}_C(e_0 p_0)$ as Real holomorphic line bundles. In this case, using the Remarks \ref{RealIso}, \ref{newdet}   the computation of the Stiefel-Whitney class  $w_1\left(\resto{(\det\mathrm{ind}\ \delta_{p_0}^{L,\mathcal{E}_0})^{\tilde{\hat\iota}}}{T}\right)$ can be reduced to   computing  the Stiefel-Whitney class of the   real  bundle associated with ${\cal O}_{\Pic^0(C)}\left(\Theta-[{\cal O}_C((g-1)p_0)]\right)$ on the real torus $\tau_{ {\cal O}_C(dp_0)}^*(T)\subset\Pic^0(C)^{\hat\iota}$.

This gives a clear geometric motivation for the computation of the first Stiefel-Whitney class  $w_1\left( {\cal O}_{\Pic^0(C)}\left(\Theta-[{\cal O}_C((g-1)p_0)]\right)^{\tilde{\hat\iota}} \right)$. We will come back to this orientability problem in section \ref{results} (see Proposition \ref{Beispiel}).
\begin{re} In the special case  when $\mathcal{E}_0=\mathcal{O}_C$, the Quot space $\mathcal{Q}uot^{L^\vee}_{\mathcal{E}_0}$ can be identified with the $d$-th symmetric power $S^d(C)$  as a Real space.
\end{re}

\section{Real Hermitian  line bundles}

\subsection{Grothendieck's formalism}

Let $X$ be a paracompact space endowed with an involution $\iota$. Regard $X$ as a $\Z_2$-space, and denote by  $\underline{S}^1$ (respectively ${\underline{S}^1}(1)$) the $\Z_2$-sheaf on $X$ of $S^1$-valued smooth functions, with the  $\Z_2$-action defined by composition with $\iota$ (respectively by composition with $\iota$ and conjugation). 

We recall from \cite{G} the following classification theorem for equivariant principal bundles.
\begin{pr} Let $(X,\gamma)$, $\gamma:\Gamma\times X\to X$  be a paracompact $\Gamma$-space, where $\Gamma$ is a finite group, and let $G$ be a Lie group endowed with a group morphism $\alpha:\Gamma\to \Aut(G)$.  Then there is a canonical bijection
$$\{\hbox{Iso classes of $\alpha$-equivariant principal $G$-bundles}\}\simeq H^1_\Gamma(X;\underline{G}(\alpha))\ ,
$$
where $\underline{G}(\alpha)$ stands for the $\Gamma$-sheaf of continuous $G$-valued maps on $X$ with $\Gamma$-action defined   via $\alpha$.
\end{pr}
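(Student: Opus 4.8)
The plan is to reduce the equivariant statement to the classical non-equivariant correspondence between principal $G$-bundles and the (possibly non-abelian) first \v{C}ech cohomology with coefficients in the sheaf of $G$-valued functions, upgraded so as to record the $\Gamma$-action via an equivariant \v{C}ech bicomplex. First I would fix a $\Gamma$-invariant open cover of $X$: since $\Gamma$ is finite and $X$ is paracompact, any cover $(V_j)_j$ can be replaced by the $\Gamma$-saturated family $(\gamma(V_j))_{\gamma\in\Gamma,\,j}$, on which $\Gamma$ permutes the members and acts on the index set. On such a cover $\mathcal{U}=(U_i)_{i\in I}$ the equivariant cohomology $H^1_\Gamma(X;\underline{G}(\alpha))$ is computed by the total complex of the bicomplex combining the ordinary \v{C}ech differential (in the cover direction) with the group-cohomology differential of $\Gamma$, acting on sections through $\alpha$, i.e. simultaneously on the base via $\iota$ and on $G$ via $\alpha:\Gamma\to\Aut(G)$. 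This is the concrete model of Grothendieck's construction for $\Gamma$ finite.

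Second I would make the dictionary explicit in degree one. An $\alpha$-equivariant principal $G$-bundle $P\to X$ carries local trivializations over the $U_i$, yielding transition functions $g_{ij}:U_{ij}\to G$ with the usual cocycle identity $g_{ij}g_{jk}=g_{ik}$. The equivariant structure --- a lift of the $\Gamma$-action to $P$ covering $\alpha$ on $X$ and $\alpha$-semilinear on the fibres --- compares, for each $\gamma\in\Gamma$, the trivialization over $U_i$ with the $\gamma$-pushforward of the trivialization over $\gamma^{-1}(U_i)$, producing a second family of maps $a_i^\gamma:U_i\to G$. Chasing the definitions, the pair $(g_{ij},a_i^\gamma)$ satisfies three relations: the \v{C}ech cocycle relation for $g_{ij}$; a twisted group-cocycle relation for $a_i^\gamma$, the twist coming from $\alpha$ acting on $G$; and a mixed relation expressing that the $a_i^\gamma$ intertwine the $g_{ij}$ with their $\gamma$-translates. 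These are precisely the conditions defining a $1$-cocycle of the total complex above, so that $[(g_{ij},a_i^\gamma)]$ determines a class in $H^1_\Gamma(X;\underline{G}(\alpha))$.

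Third, I would construct the inverse. Given such a cocycle, the $g_{ij}$ clutch together a principal $G$-bundle $P$ by the standard construction, while the $a_i^\gamma$, together with the $\alpha$-action on $G$, glue to a $\Gamma$-action on $P$ covering $\alpha$; the mixed and twisted relations guarantee that this action is well defined on overlaps and that $\gamma_1\gamma_2$ acts as the composite. It then remains to check that the two constructions are mutually inverse up to isomorphism: an equivariant bundle isomorphism corresponds exactly to a change of local trivializations, that is to a $0$-cochain $(b_i)$ whose total (twisted) coboundary carries one cocycle to the other, so that isomorphism classes of equivariant bundles match cohomology classes. Independence of the chosen invariant cover follows from the usual refinement argument carried out $\Gamma$-equivariantly, and naturality is immediate from the constructions, which gives canonicity of the bijection.

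I expect the main obstacle to be the bookkeeping at the non-abelian level: $H^1_\Gamma$ is only a \emph{pointed set}, so ``cohomologous'' must be defined through the twisted coboundary action of $0$-cochains rather than as a quotient group, and one must verify that the three compatibility relations are stated in a convention consistent with the $\Gamma$-action on $\underline{G}(\alpha)$ --- the simultaneous twist on base and fibre. The existence of $\Gamma$-invariant covers and the equivariant refinement lemma are routine for finite $\Gamma$, so the genuine care lies in matching these relations with the differential of Grothendieck's equivariant complex and in confirming that every identification is independent of the intermediate choices.
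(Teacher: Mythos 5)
The paper offers no proof of this proposition at all: it is recalled verbatim from Grothendieck's Tohoku paper \cite{G} as a known classification theorem, so there is no internal argument to measure yours against. Your \v{C}ech-bicomplex proof is the standard way to establish the statement, and the outline is sound: a $\Gamma$-saturated trivializing cover exists because $\Gamma$ is finite and $X$ is paracompact; the pair $(g_{ij},a_i^\gamma)$ subject to the \v{C}ech relation, the $\alpha$-twisted group-cocycle relation, and the mixed intertwining relation is precisely a degree-one cocycle of the total complex; and clutching inverts the assignment. What the citation buys the authors is brevity; what your argument buys is an explicit cocycle dictionary, which is in fact what the rest of the paper silently uses (e.g.\ the exact sequence (\ref{E2}) and Proposition \ref{firstES} are read off from exactly this bicomplex at the $E_2$ level). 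Two places in your sketch carry more weight than you acknowledge. For non-abelian $G$, the right-hand side $H^1_\Gamma(X;\underline{G}(\alpha))$ is not a derived functor but a pointed set essentially \emph{defined} by the cocycle data you write down, so there the substantive content is only the cofinality of $\Gamma$-invariant covers under equivariant refinement; whereas in the abelian case the paper actually needs ($G=S^1$), one must additionally identify the equivariant \v{C}ech $H^1$ on such covers with Grothendieck's derived-functor $H^1_\Gamma$, which is where paracompactness genuinely enters. Neither point is a gap of substance, but they are where your ``routine'' steps conceal the real work.
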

\begin{re} When $X$ is a differentiable manifold one obtains a similar result replacing the $\Gamma$-sheaf of continuous $G$-valued maps by the $\Gamma$-sheaf of smooth $G$-valued maps on $X$. Moreover the cohomology sets associated with the two sheaves can be identified as in the non-equivariant case. 
\end{re}
This can be seen be comparing the standard spectral sequences associated with the two sheaves at the $E_2$-level.\\

For any Abelian group $A$ one has two obvious $\Z_2$-actions on $A$: the trivial action $\alpha_0$  and the inversion action $\alpha_1$. We agree to write $(0)$ and $(1)$ for  the twistings by $\alpha_0$ and $\alpha_1$, and we agree to omit $(0)$. Let $^-$ be the conjugation action of $\Z_2$ on $\C$ and $\C^*$.
\begin{re} Let $(X,\iota)$ be a space with involution. The set of  isomorphism classes of $\iota$-Real line bundles on $X$ can be identified with the set of isomorphism classes of  Hermitian $\iota$-Real line bundles on $X$. More precisely the monomorphism $\underline{S}^1(1)=\underline{S}^1(^-)\to\underline{\C}^*(^-)$  induces isomorphism in positive cohomology.
\end{re}
Indeed, it suffices to see  that $H^k_{\Z_2}(X,\underline{\R})=0$  for any $k>0$  (using again  the standard spectral sequence associated with this sheaf). \\

In particular, we obtain an identification
$$\{[L,\tilde\iota]\ \vline\ (L,\tilde\iota) \hbox{ Real line bundle over }(X,\iota)  \}=H^1_{\Z_2}(X,\underline{S}^1(1))\ .
$$

Denote by ${\cal G}(1)$  the $\Z_2$-module structure on the gauge group ${\cal G}$  defined by the involution $g\mapsto \iota^*(\bar g)$. Using the standard spectral exact sequence associated with the $\Z_2$-sheaf $\underline{S}^1(1)$     one obtains an exact sequence
\begin{equation}\label{E2}  H^1_{\Z_2}(H^0(X,\underline{S^1}(1)))\map H^1_{\Z_2}(X,\underline{S}^1(1))\map  H^0_{\Z_2}(H^1(X,\underline{S}^1(1)))=$$
$$=H^0_{\Z_2}(H^2(X,\Z)(1))\textmap{d_2}  H^2_{\Z_2}(H^0(X,\underline{S}^1(1)))\ .
\end{equation}
The $\Z_2$-module $H^0(X,\underline{S^1}(1))$ in the first and in the last term above is just the gauge group ${\cal G}$ regarded as a $\Z_2$-module via the involution $g\mapsto \iota^*(\bar g))$. We use the notation ${\cal G}$ for the $\Z_2$-module structure  defined by the involution $g\mapsto \iota^*(g)$ (not the trivial $\Z_2$-module structure!). The first cohomology group $H^1_{\Z_2}({\cal G}(1))$ of ${\cal G}(1)$ fits into the short exact sequence of $\Z_2$-modules
$$0\map {\cal G}(1)^{\Z_2}\map {\cal G}\textmap{\Sigma} {\cal G}^{\Z_2}\map H^1_{\Z_2}({\cal G}(1))\map 0\ ,
$$
where $\Sigma:{\cal G}\to {\cal G}^{\Z_2}$ is the morphism $g\mapsto g\iota^*(g)$. This proves the following 
\begin{pr}\label{firstES} One has an exact sequence

$$\hspace{-3mm}1\to{\cal G}(1)^{\Z_2}\to{\cal G}\textmap{\Sigma}{\cal G}^{\Z_2}\textmap{\lambda}H^1_{\Z_2}(X,\underline{S}^1(1))\stackrel{c_1}{\to} H^2(X,\Z)(1)^{\Z_2}\stackrel{\ooo}{\to}H^2_{\Z_2}({\cal G}(1)),
$$
where 
\begin{enumerate}
\item The morphism $\Sigma$ is given by $\Sigma(g):=g(\iota^*g)$, 
\item $\ker(c_1)=\qmod{{\cal G}^{\Z_2}}{\Sigma({\cal G})}=H^1_{\Z_2}({\cal G}(1))$, 
\item  $H^2(X,\Z)(1)^{\Z_2}=\{x\in H^2(M,\Z)|\ \iota^*(x)=-x\}$.

\end{enumerate}
\end{pr}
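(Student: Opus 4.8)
The plan is to read off the stated sequence by splicing the low-degree (five-term) exact sequence (\ref{E2}) of the Grothendieck spectral sequence $E_2^{p,q}=H^p_{\Z_2}(H^q(X,\underline{S}^1(1)))\Rightarrow H^{p+q}_{\Z_2}(X,\underline{S}^1(1))$ with the short exact sequence of $\Z_2$-modules recalled immediately before the statement. The truncated five-term sequence (\ref{E2}) reads
$$
E_2^{1,0}\to H^1_{\Z_2}(X,\underline{S}^1(1))\stackrel{c_1}{\to}E_2^{0,1}\stackrel{d_2}{\to}E_2^{2,0},
$$
and I would use two standard features of it: the edge morphism $E_2^{1,0}\to H^1_{\Z_2}(X,\underline{S}^1(1))$ is injective, and $c_1$ maps onto $\ker(d_2)\subset E_2^{0,1}$ with kernel equal to the image of that edge morphism. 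So the first task is to compute the three $E_2$-terms.

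First I would treat $E_2^{0,1}=H^0_{\Z_2}(H^1(X,\underline{S}^1(1)))$. Applying the exponential sequence of $\Z_2$-sheaves $0\to\underline{\Z}(1)\to\underline{\R}\to\underline{S}^1(1)\to0$ (in which the twist $(1)$ on $\underline{\Z}$ records that conjugation $z\mapsto\bar z$ on $S^1$ corresponds to $t\mapsto-t$ on $\underline{\R}$) and using that $\underline{\R}$ is soft, so $H^k(X,\underline{\R})=0$ for $k>0$, the connecting morphism gives an isomorphism of $\Z_2$-modules $H^1(X,\underline{S}^1(1))\simeq H^2(X,\Z)(1)$, whose module structure is $x\mapsto-\iota^*(x)$. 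Taking invariants yields $E_2^{0,1}=H^2(X,\Z)(1)^{\Z_2}=\{x\in H^2(X,\Z)\ |\ \iota^*(x)=-x\}$, which is assertion (3); here $c_1$ is the ordinary first Chern class landing in these anti-invariants. The remaining terms are immediate from $H^0(X,\underline{S}^1(1))={\cal G}(1)$: one has $E_2^{2,0}=H^2_{\Z_2}({\cal G}(1))$ and $E_2^{1,0}=H^1_{\Z_2}({\cal G}(1))$.

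Next I would make $E_2^{1,0}=H^1_{\Z_2}({\cal G}(1))$ explicit. Since $\Z_2$ is cyclic, $H^1_{\Z_2}(M)=\ker(N)/\im(\sigma-1)$ for the generator $\sigma$ and the norm $N=1+\sigma$. With $M={\cal G}(1)$ and $\sigma(g)=\iota^*(\bar g)=(\iota^*g)^{-1}$ (all groups written multiplicatively, so $\bar g=g^{-1}$ for $S^1$-valued $g$), a one-line computation gives $N(g)=g\,(\iota^*g)^{-1}$ and $(\sigma-1)(g)=(g\,\iota^*g)^{-1}$. Hence $\ker(N)=\{g\ |\ \iota^*g=g\}={\cal G}^{\Z_2}$ and $\im(\sigma-1)=\Sigma({\cal G})$, where $\Sigma(g)=g\,\iota^*(g)$ takes values in ${\cal G}^{\Z_2}$ because $\iota$ is an involution. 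This is precisely the short exact sequence $0\to{\cal G}(1)^{\Z_2}\to{\cal G}\stackrel{\Sigma}{\to}{\cal G}^{\Z_2}\to H^1_{\Z_2}({\cal G}(1))\to0$ recalled before the statement, and it establishes assertion (1) together with $H^1_{\Z_2}({\cal G}(1))={\cal G}^{\Z_2}/\Sigma({\cal G})$.

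Finally I would splice. Setting $o:=d_2$ and letting $\lambda$ be the composite ${\cal G}^{\Z_2}\twoheadrightarrow H^1_{\Z_2}({\cal G}(1))=E_2^{1,0}\hookrightarrow H^1_{\Z_2}(X,\underline{S}^1(1))$, and prepending the short exact sequence of the previous step to (\ref{E2}), produces exactly the asserted sequence. Exactness at ${\cal G}^{\Z_2}$ and at ${\cal G}$ is the content of that short exact sequence, while exactness at $H^1_{\Z_2}(X,\underline{S}^1(1))$ and at $H^2(X,\Z)(1)^{\Z_2}$ follows from (\ref{E2}). In particular $\ker(c_1)$ equals the image of the injective edge map, so $\ker(c_1)\simeq E_2^{1,0}={\cal G}^{\Z_2}/\Sigma({\cal G})=H^1_{\Z_2}({\cal G}(1))$, which is assertion (2). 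I expect the only genuinely delicate point to be the bookkeeping of the $\Z_2$-twists: tracking the sign produced by conjugation so that (3) comes out as the anti-invariants $\iota^*x=-x$ rather than the invariants, and matching the coboundary $\im(\sigma-1)$ of the twisted cyclic cohomology with the homomorphism $\Sigma$. The rest is formal diagram chasing.
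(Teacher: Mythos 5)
Your proposal is correct and follows essentially the same route as the paper: the five-term exact sequence of the equivariant (Grothendieck) spectral sequence for $\underline{S}^1(1)$, the identification $H^1(X,\underline{S}^1(1))\simeq H^2(X,\Z)(1)$, and the explicit computation of $H^1_{\Z_2}({\cal G}(1))$ as $\ker(N)/\im(\sigma-1)={\cal G}^{\Z_2}/\Sigma({\cal G})$, spliced together exactly as the paper does just before the statement. Your bookkeeping of the twist and of the norm/coboundary maps matches the paper's.
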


To compute $H^1_{\Z_2}({\cal G}(1))$ we use  the short exact sequence  of $\Z_2$-modules 
\begin{equation}\label{ses} 0\map {\cal G}_0(1)\map {\cal G}(1)\map H^1(X,\Z)(1)\map 0\ , 
\end{equation}
where ${\cal G}_0={\cal C}^\infty(X,\R)/\Z$ is the connected component of the identity in ${\cal G}$.  This connected component fits into the short exact sequence
\begin{equation}\label{ss}0\map  \Z(1)\map {\cal C}^\infty(X,\R)(1)\map {\cal G}_0(1)\map 0\ .\end{equation}
One has $H^k_{\Z_2}({\cal C}^\infty(X,\R)(1))=0$ for all $k\geq 1$.  Therefore 
$$H^{2k-1}_{\Z_2}({\cal G}_0(1))=H^{2k}_{\Z_2}(\Z(1))=0\ ,\ H^{2k}_{\Z_2}({\cal G}_0(1))=H^{2k+1}_{\Z_2}(\Z(1))=\Z_2\ ,\ \forall k\geq 1\ .$$
We get an exact sequence
$$0\to H^1_{\Z_2}({\cal G}(1))\to H^1_{\Z_2}(H^1(X,\Z)(1))\to H^2_{\Z_2}({\cal G}_0(1))\to H^2_{\Z_2}({\cal G}(1))\to $$
$$\to H^2_{\Z_2}(H^1(X,\Z)(1))\to 0 \ .
$$

When $X^\iota\ne \emptyset$, we choose $x_0\in X^\iota$ and we notice that the composition of the maps
$$S^1(1)\map {\cal G}_0(1)\map {\cal G}(1)\textmap{\mathrm{ev}_{x_0}}S^1(1)
$$
is the identity, and that the first map $S^1(1)\map {\cal G}_0(1)$ induces an isomorphism in cohomology groups of strictly positive degree. This is so since $S^1(1)$ fits into the short exact sequence $1\to\Z(1)\to\R(1)\to S^1(1)\to 1$, which can be easily compared to (\ref{ss}). Therefore the morphism $H^2_{\Z_2}({\cal G}_0(1))\to H^2_{\Z_2}({\cal G}(1))$ is injective, and we get
\begin{re} \label{rem} When $X^\iota\ne \emptyset$, the natural map  
$$H^1_{\Z_2}({\cal G}(1))\map H^1_{\Z_2}(H^1(X,\Z)(1))  $$
is an isomorphism, and one has the short exact sequence
$$
0\to  H^2_{\Z_2}({\cal G}_0(1))\simeq \Z_2\textmap{j} H^2_{\Z_2}({\cal G}(1))\textmap{q} H^2_{\Z_2}(H^1(X,\Z)(1))\map 0\ .
$$
The generator of 
$$H^2_{\Z_2}({\cal G}_0(1))=\qmod{{\cal G_0}(1)^{\Z_2}}{\{g\iota^*(\bar g)|\ g\in{\cal G}_0\}}$$
 is the class modulo $\{g\iota^*(\bar g)|\ g\in{\cal G}_0\}$ of the constant gauge transformation $-1\in S^1$.
\end{re}
 
\begin{lm} Suppose $X^\iota\ne\emptyset$. The edge morphism 
$$d_2:  H^0_{\Z_2}(H^2(X,\Z)(1))=H^2(X,\Z)(1)^{\Z_2}\map   H^2_{\Z_2}({\cal G}(1))$$
has the property $\ker(d_2)=\ker(q\circ d_2)$.
\end{lm}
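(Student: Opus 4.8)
The inclusion $\ker(d_2)\subseteq\ker(q\circ d_2)$ is automatic, so the entire content of the statement is the reverse inclusion. By Remark \ref{rem} the morphism $q$ sits in a short exact sequence in which $\ker(q)=\im(j)\simeq\Z_2$, generated by $j(\epsilon)$, where $\epsilon$ is the class modulo $\{g\iota^*(\bar g)\mid g\in{\cal G}_0\}$ of the constant gauge transformation $-1\in S^1$. Hence, for any $x\in\ker(q\circ d_2)$ one has $d_2(x)\in\{0,\,j(\epsilon)\}$, and it suffices to exclude the case $d_2(x)=j(\epsilon)$. The plan is to detect the difference between $0$ and $j(\epsilon)$ by restricting everything to a fixed point.

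Choose $x_0\in X^\iota$ (possible since $X^\iota\ne\emptyset$). As $x_0$ is $\iota$-fixed, the inclusion $i\colon\{x_0\}\hookrightarrow X$ is $\Z_2$-equivariant and therefore induces a morphism between the Grothendieck spectral sequences of \cite{G} computing $H^*_{\Z_2}(X,\underline{S}^1(1))$ and $H^*_{\Z_2}(\{x_0\},\underline{S}^1(1))$. For the one-point space one has $H^q(\{x_0\},\underline{S}^1(1))=0$ for $q\geq 1$, so its $E_2$-page is concentrated in the row $q=0$; in particular the source of the corresponding $d_2$ vanishes, and that $d_2$ is zero. Naturality of the $d_2$-differential then gives $\mathrm{ev}_{x_0}^*\circ d_2=0$, where $\mathrm{ev}_{x_0}^*\colon H^2_{\Z_2}({\cal G}(1))\to H^2_{\Z_2}(S^1(1))$ is the map induced on $E_2^{2,0}$ by evaluation $\mathrm{ev}_{x_0}\colon{\cal G}(1)\to S^1(1)$ at $x_0$ (restriction of a global section to the fibre over $x_0$). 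Consequently $\mathrm{ev}_{x_0}^*(d_2(x))=0$ for every $x$.

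It remains to check that $\mathrm{ev}_{x_0}^*$ does not annihilate $j(\epsilon)$, and here I would reuse the retraction already established before Remark \ref{rem}: the composite $S^1(1)\map{\cal G}_0(1)\map{\cal G}(1)\textmap{\mathrm{ev}_{x_0}}S^1(1)$ is the identity, and the first arrow induces an isomorphism in positive-degree cohomology. Passing to $H^2_{\Z_2}(-)$, the generator $\epsilon$ of $H^2_{\Z_2}({\cal G}_0(1))\simeq H^2_{\Z_2}(S^1(1))\simeq\Z_2$ is pulled back from the generator of $H^2_{\Z_2}(S^1(1))$, so $\mathrm{ev}_{x_0}^*(j(\epsilon))$ equals that generator again and is in particular nonzero. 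Combining this with the previous paragraph, for $x\in\ker(q\circ d_2)$ the two facts $d_2(x)\in\{0,j(\epsilon)\}$ and $\mathrm{ev}_{x_0}^*(d_2(x))=0$ force $d_2(x)=0$; this yields $\ker(q\circ d_2)\subseteq\ker(d_2)$, and hence the asserted equality.

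The step I expect to need the most care is the functoriality input in the second paragraph: one must verify that the spectral sequence of \cite{G} is natural with respect to the equivariant inclusion of a fixed point, and that the induced map on $E_2^{2,0}=H^2(\Z_2,H^0(X,\underline{S}^1(1)))$ is precisely $H^2(\Z_2,-)$ applied to the $\Z_2$-module morphism $\mathrm{ev}_{x_0}$. Once this identification is pinned down, everything else is formal, the only arithmetic being the computation $H^2_{\Z_2}(S^1(1))\simeq\Z_2$, which already underlies Remark \ref{rem}.
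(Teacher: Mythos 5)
Your proof is correct, and it shares with the paper's argument the one essential idea: after reducing to showing $d_2(c)\neq j(\epsilon)=[-1]$, one distinguishes these two classes by evaluating at a fixed point $x_0\in X^\iota$. Where the two arguments differ is in how they show that everything in the image of $d_2$ dies under $\mathrm{ev}_{x_0}^*$. The paper first gives an explicit cocycle-level description of the transgression: choosing a Hermitian line bundle $L$ with $c_1(L)=c$ and an anti-linear isometry $\sigma:L\to\iota^*(L)$, it identifies $d_2(c)$ with the class of $\phi=\sigma_{\iota(\cdot)}\circ\sigma\in{\cal G}$ and computes directly that $\phi_{x_0}=\zeta\bar\zeta=1$, whereas any representative of $[-1]$ takes the value $-1$ at $x_0$. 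You bypass this geometric identification entirely: the equivariant inclusion $\{x_0\}\hookrightarrow X$ induces a morphism of Grothendieck spectral sequences, the source of the corresponding $d_2$ vanishes for the one-point space, and naturality gives $\mathrm{ev}_{x_0}^*\circ d_2=0$ formally; the nonvanishing of $\mathrm{ev}_{x_0}^*(j(\epsilon))$ then follows from the retraction $S^1(1)\to{\cal G}(1)\textmap{\mathrm{ev}_{x_0}}S^1(1)$ already established before Remark \ref{rem}, exactly as you say. Your route buys independence from the explicit description of $d_2$ (which the paper asserts rather than fully justifies), at the price of invoking functoriality of the equivariant spectral sequence for the pair consisting of the equivariant inclusion and the evaluation morphism of sheaves --- the step you rightly flag, which is standard and unproblematic. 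Conversely, the paper's computation has the side benefit of exhibiting $d_2(c)$ concretely as the obstruction cocycle $\phi$, which is of some independent interest. Both proofs are complete and correct.
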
 
\begin{proof}
 Indeed, if $c\in H^2(X,\Z)(1)^{\Z_2}$ belongs to $\ker(q\circ d_2)$, then we get  $d_2(c)\in j(H^2_{\Z_2}({\cal G}_0(1)))$. It suffices to notice that $d_2(c)$ can never coincide with the class $[-1]$ modulo $\{g\iota^*(\bar g)|\ g\in{\cal G}_0\}$. This can be seen as follows:

The morphism $d_2: H^2(X,\Z)(1)^{\Z_2}\to H^2_{\Z_2}({\cal G}(1))$ can be geometrically interpreted as follows: consider a Hermitian line bundle $L$ on $X$ with Chern class $c_1(L)=c\in H^2(X,\Z)(1)^{\Z_2}$. Since $\iota^*(c)=-c$, it follows that $\iota^*(L)\simeq\bar L$, so there exists an anti-linear isometry $\sigma:L\to \iota^*(L)$.  We get a smoothly varying family of anti-linear isometries  $\sigma_x:L_x\to L_{\iota(x)}$. The composition $\phi_x=\sigma_{\iota_x}\circ\sigma_x:L_x\to L_x$ is $\C$-linear, so it can be regarded as an element in $S^1$, depending smoothly on $x\in X$. It is easy to see that 
$\iota^*(\phi)=\bar\phi$. The element $d_2(c)$ is just the class $[\phi]$ modulo the subgroup $\{g\iota^*(\bar g)|\ g\in{\cal G}_0\}$. We have to show that $[\phi]\ne[-1]$. Choose $x_0\in X^\iota$ and a  unitary identification $L_{x_0}\simeq \C$. The anti-linear isometry $\sigma_{x_0}$ acts as $\sigma_{x_0}(l)=\zeta\bar l$, for a constant $\zeta\in S^1$. Therefore $\phi_{x_0}=\zeta\bar \zeta=1$. If   $[\phi]=[-1]$, one would have $\phi_{x_0}=-\psi_{x_0}\bar\psi_{x_0}=-1$ for a smooth $S^1$-valued function $\psi$, which yields obviously a contradiction.
\end{proof}

Using  Proposition \ref{firstES}  and Remark  \ref{rem} we obtain:

\begin{co} \label{RealLB} Suppose $X^\iota\ne\emptyset$. There exists an exact sequence
$$0\to H^1_{\Z_2}(H^1(X,\Z)(1))\to H^1_{\Z_2}(X,\underline{S}^1(1))\textmap{c_1} H^2(X,\Z)(1)^{\Z_2}\textmap{\ooo} H^2_{\Z_2}(H^1(X,\Z)(1))\ .
$$
\end{co}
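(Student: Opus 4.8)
The plan is to obtain the asserted four-term sequence directly from the five-term exact sequence of Proposition \ref{firstES} by rewriting its two outer groups, while keeping the middle map $c_1$ unchanged. Recall that Proposition \ref{firstES} furnishes
$$
{\cal G}\textmap{\Sigma}{\cal G}^{\Z_2}\textmap{\lambda}H^1_{\Z_2}(X,\underline{S}^1(1))\textmap{c_1} H^2(X,\Z)(1)^{\Z_2}\textmap{o}H^2_{\Z_2}({\cal G}(1)),
$$
together with $\ker(c_1)={\cal G}^{\Z_2}/\Sigma({\cal G})=H^1_{\Z_2}({\cal G}(1))$. Thus $\lambda$ descends to an injection $H^1_{\Z_2}({\cal G}(1))\hookrightarrow H^1_{\Z_2}(X,\underline{S}^1(1))$ whose image is exactly $\ker(c_1)$. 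The entire task is therefore to replace the source $H^1_{\Z_2}({\cal G}(1))$ and the target $H^2_{\Z_2}({\cal G}(1))$ by the groups built from $H^1(X,\Z)(1)$.

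For the left end I would invoke Remark \ref{rem}: since $X^\iota\ne\emptyset$, the natural map is an isomorphism $H^1_{\Z_2}({\cal G}(1))\simeq H^1_{\Z_2}(H^1(X,\Z)(1))$. Transporting the injection above through this isomorphism yields $0\to H^1_{\Z_2}(H^1(X,\Z)(1))\to H^1_{\Z_2}(X,\underline{S}^1(1))$ with image $\ker(c_1)$, which is exactness at the first two terms.

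For the right end, note that the obstruction map $o$ is the spectral-sequence differential $d_2$ analysed in the Lemma above. Remark \ref{rem} provides a surjection $q:H^2_{\Z_2}({\cal G}(1))\to H^2_{\Z_2}(H^1(X,\Z)(1))$, with kernel the copy of $\Z_2$ coming from $H^2_{\Z_2}({\cal G}_0(1))$, so I would define the new obstruction morphism as $\ooo:=q\circ o$. Proposition \ref{firstES} gives $\im(c_1)=\ker(o)$, and the Lemma above gives $\ker(o)=\ker(q\circ o)=\ker(\ooo)$; hence $\im(c_1)=\ker(\ooo)$, which is exactness at $H^2(X,\Z)(1)^{\Z_2}$ and finishes the proof.

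The genuine content sits entirely in the equality $\ker(o)=\ker(q\circ o)$, i.e.\ that composing with the quotient $q$ does not enlarge the kernel of $o$ — equivalently, that $o$ never hits the nontrivial class $[-1]$ in the $\Z_2$-kernel of $q$. This is precisely what the Lemma above establishes, by a base-point computation at a fixed point $x_0\in X^\iota$. So the only real obstacle has already been removed, and the Corollary follows as a formal assembly of Proposition \ref{firstES}, Remark \ref{rem}, and that Lemma.
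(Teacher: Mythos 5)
Your proposal is correct and follows exactly the paper's route: the paper obtains the corollary by assembling Proposition \ref{firstES}, Remark \ref{rem}, and the preceding Lemma on $\ker(d_2)=\ker(q\circ d_2)$, which is precisely your assembly with $\ooo:=q\circ o$. You have also correctly identified that the only non-formal step is the Lemma's statement that $o$ never hits the class $[-1]$, which the paper establishes by the same base-point argument at $x_0\in X^\iota$.
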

\vspace{3mm}
As in the classical classification theory for vector bundles, it is important to give an explicit description of the set $H^1_{\Z_2}(X,\underline{S}^1(1))$ of isomorphism classes of $\iota$-Real line bundles on $X$ in terms of characteristic classes.  The relevant characteristic classes associated to a $\iota$-Real Hermitian line bundle $(L,\tilde\iota)$  on $(X,\iota)$ are: 
$$c_1(L)\in \ker(\oo)\subset H^2(X,\Z)(1)^{\Z_2}\ ,\ w_1(L^{\tilde\iota})\in H^1(X^\iota,\Z_2)\ .$$

Therefore it is a natural problem to determine explicitly the kernel and the image of the corresponding group morphism
$$H^1_{\Z_2}(X,\underline{S}^1(1))\textmap{cw} H^2(X,\Z)(1)^{\Z_2}\times H^1(X^\iota,\Z_2)\ .
$$
Determining these groups will give an alternative short exact sequence having  the group $H^1_{\Z_2}(X,\underline{S}^1(1))$ as central term.

\subsection{Examples}\label{examples}

In this section we will apply the general formalism developed  above in two important cases:    Klein surfaces  and a Real tori.
\vspace{3mm}

\paragraph{\bf The case of a  Klein  surface}   Let $C$ be a closed connected, oriented  differentiable 2-manifold, and $\iota:C\to C$ an orientation reversing involution with $C^\iota\ne\emptyset$.  Let $r\in\N$ be the number of components of $C^\iota$.
Let $d_2:H^2(C,\Z)\to\Z_2$,  $\deg_{\Z_2}: H^1(C^\iota,\Z_2)\to\Z_2$  be the morphisms defined by 
$$\alpha\mapsto \langle\alpha,[C]\rangle\hbox { mod 2 },\  \gamma\mapsto \langle \gamma,[C^\iota]_{\Z_2}\rangle\ ,$$
 and denote by $H^2(C,\Z)(1)^{\Z_2}\times_{\Z_2} H^1(C^\iota,\Z_2)$ the fiber product of $d_2$ and $\deg_{\Z_2}$.
\begin{thry} \label{ClassOnKlein} The characteristic map  
$$cw: H^1_{\Z_2}(C,\underline{S}^1(1))\map  H^2(C,\Z)(1)^{\Z_2}\times H^1(C^\iota,\Z_2)$$
induces an isomorphism
$$H^1_{\Z_2}(C,\underline{S}^1(1))\textmap{cw} H^2(C,\Z)(1)^{\Z_2}\times_{\Z_2} H^1(C^\iota,\Z_2) \ .
$$

\end{thry}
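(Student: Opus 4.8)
The plan is to run the fundamental exact sequence of Corollary \ref{RealLB} for $X=C$ and to feed into it two elementary geometric inputs. First I would record the two target groups. Since $\iota$ reverses orientation, $\iota^*$ acts as $-1$ on $H^2(C,\Z)\simeq\Z$, so $H^2(C,\Z)(1)^{\Z_2}=H^2(C,\Z)\simeq\Z$ and $d_2$ is reduction modulo $2$; the fixed locus $C^\iota=C_1\sqcup\dots\sqcup C_r$ is a disjoint union of $r$ circles, whence $H^1(C^\iota,\Z_2)\simeq(\Z_2)^r$ and $\deg_{\Z_2}$ is the sum of the components. For $X=C$ Corollary \ref{RealLB} becomes
$$0\map H^1_{\Z_2}(H^1(C,\Z)(1))\map H^1_{\Z_2}(C,\underline{S}^1(1))\textmap{c_1}\Z\textmap{\ooo}H^2_{\Z_2}(H^1(C,\Z)(1))\ ,$$
and the usual Tate cohomology computation for the cyclic group $\Z_2$ identifies $\ker(c_1)=H^1_{\Z_2}(H^1(C,\Z)(1))$ with $H^1(C,\Z)^{\iota^*}/(\id+\iota^*)H^1(C,\Z)$. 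Note that $cw=(c_1,w)$ is a group homomorphism, so it suffices to control $c_1$, the compatibility relation, and the restriction of $w$ to $\ker(c_1)$.

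Surjectivity of $c_1$ onto $\Z$ (equivalently $\ooo=0$) is immediate: for every $n\in\Z$ one builds an $\iota$-invariant divisor of degree $n$ out of pairs $\{p,\iota(p)\}$, adjoining one fixed point of $C^\iota\ne\emptyset$ when $n$ is odd, and the associated differentiable line bundle carries a natural Real structure as recalled in Section \ref{famDirac}. For the compatibility relation, i.e. that $\im(cw)$ lies in the fibre product, I would count zeros of a generic $\tilde\iota$-invariant section $s$ of $L$. Its zeros are transverse and $\iota$-invariant, so those lying off $C^\iota$ cancel in pairs; along each circle $C_i$ the section $s$ takes values in the fixed real subbundle $L^{\tilde\iota}|_{C_i}$, so modulo $2$ one gets
$$\langle c_1(L),[C]\rangle\equiv\#\bigl(Z(s)\cap C^\iota\bigr)\equiv\sum_{i=1}^{r}\langle w_1(L^{\tilde\iota}|_{C_i}),[C_i]\rangle=\deg_{\Z_2}\bigl(w_1(L^{\tilde\iota})\bigr)\ ,$$
which is exactly $d_2(c_1(L))=\deg_{\Z_2}(w(L,\tilde\iota))$.

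The heart of the argument is the behaviour of $w$ on $\ker(c_1)$, where the bundles are flat. Through the holonomy interpretation of $w_1$ recalled in the Introduction, the class of $\alpha\in H^1(C,\Z)^{\iota^*}$ is sent to the tuple of fixed-circle monodromies $(\langle\alpha,[C_i]\rangle\bmod 2)_{i=1}^r\in(\Z_2)^r$; by the compatibility relation applied with $c_1=0$ this tuple lies in $\ker(\deg_{\Z_2})\simeq(\Z_2)^{r-1}$. I would then invoke the classical topology of real curves (Comessatti--Weichold): the group $H^1(C,\Z)^{\iota^*}/(\id+\iota^*)H^1(C,\Z)$ has order $2^{r-1}$, and the oval-period map is injective. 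Combined with $|\ker(\deg_{\Z_2})|=2^{r-1}$, this forces $w$ to restrict to an isomorphism of $\ker(c_1)$ onto $\ker(\deg_{\Z_2})$.

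Assembling: $cw$ is injective because $c_1(L)=0$ and $w(L,\tilde\iota)=0$ put $(L,\tilde\iota)$ in $\ker(c_1)$, on which $w$ is injective. For surjectivity onto the fibre product, given $(n,\gamma)$ with $n\equiv\deg_{\Z_2}(\gamma)$ I first realize $n$ by a Real line bundle $(L_0,\tilde\iota_0)$ coming from an invariant divisor; then $\gamma-w(L_0,\tilde\iota_0)\in\ker(\deg_{\Z_2})$ by compatibility, so it is $w$ of a unique flat class in $\ker(c_1)$, and tensoring $(L_0,\tilde\iota_0)$ with the latter leaves $c_1=n$ unchanged while correcting $w$ to $\gamma$. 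The main obstacle is the third step: translating the abstract group-cohomology class in $H^1(C,\Z)^{\iota^*}/(\id+\iota^*)H^1(C,\Z)$ into concrete monodromies along the ovals $C_i$ and establishing that the resulting period map is an isomorphism onto $\ker(\deg_{\Z_2})$ — the input $2^{r-1}$ being precisely the number of connected components of the real Jacobian $\Pic^0(C)^{\hat\iota}$.
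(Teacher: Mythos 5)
Your proposal is correct, and its skeleton coincides with the paper's: both run the exact sequence of Corollary \ref{RealLB} for $X=C$, identify $H^2(C,\Z)(1)^{\Z_2}\simeq\Z$, realize arbitrary degrees by $\iota$-invariant divisors, and exploit the coincidence of cardinalities $|\ker(c_1)|=|\ker(\deg_{\Z_2})|=2^{r-1}$ (the former via a Comessatti basis). You differ in two steps, both legitimately. First, for the compatibility relation $d_2(c_1(L))=\deg_{\Z_2}(w_1(L^{\tilde\iota}))$ the paper quotes the Corollary of Appendix~B (the $\Z_2$-localization formula for Stiefel--Whitney numbers), whereas you count zeros of a generic $\tilde\iota$-invariant section, pairing off the zeros away from $C^\iota$ and identifying the mod-$2$ count on each oval with $\langle w_1(L^{\tilde\iota}|_{C_i}),[C_i]\rangle$; this is more elementary and self-contained, at the modest cost of an equivariant transversality remark (invariant sections evaluate into the real subbundle along $C^\iota$, so one must check that transversality is still generic there and that it restricts to transversality of $s|_{C_i}$ in $L^{\tilde\iota}|_{C_i}$ --- both hold because the differential is equivariant for the two anti-involutions). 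Second, the logical order of injectivity and surjectivity is reversed: the paper proves surjectivity of $cw$ geometrically and then extracts injectivity from the snake lemma plus the order count, so the injectivity of the ``oval-period map'' on $\ker(c_1)$ comes out as a consequence; you instead take that injectivity as the classical Comessatti--Weichold input and derive everything else from it. That fact is indeed classical (it is the statement that the $2^{r-1}$ components of $\Pic^0(C)^{\hat\iota}$ are separated by restriction to $H^1(C^\iota,\Z_2)$), but since the paper itself only verifies the cardinality with a Comessatti basis and gets injectivity for free, you should either cite a precise reference for the injectivity or check it directly on a Comessatti basis --- you correctly flag this as the one step still requiring work. With that input supplied, your argument is complete.
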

\begin{proof}
 
It follows from the Corollary in  Appendix B  that  for any $\iota$-Real line bundle $(L,\tilde\iota)$ one has 
$$\deg_{\Z_2}(w_1(L^{\tilde\iota}))\equiv \langle c_1(L),[C]\rangle \hbox { (mod 2) } \ .$$

 This shows that $\im(cw)\subset H^2(C,\Z) \times_{\Z_2} H^1(C^\tau,\Z_2)$, and  that we have a commutative  diagram:
$$\hspace{-3mm}\begin{array}{ccccccccc}
0&\to &H^1_{\Z_2}(H^1(C,\Z)(1))&\to& H^1_{\Z_2}(C,\underline{S}^1(1))&\textmap{c_1}& H^2(C,\Z)&\to&0\\
&&\downarrow \jg&&\downarrow cw&&\|
\\
0&\to&\ker(\deg_{\Z_2})&\to&H^2(C,\Z) \times_{\Z_2} H^1(C^\iota,\Z_2)&\textmap{\mathrm{pr}_1}& H^2(C,\Z)&\to&0
\end{array}
$$
One easily sees that $cw$ is surjective e.g. by choosing a $\iota$-anti-invariant holomorphic structure $J$ on $C$ and using Real divisors to construct real line bundles with prescribed characteristic classes.

Using a Comessatti basis for the $\Z_2$-module $H^1(C,\Z)$ one computes 
$$H^1_{\Z_2}(H^1(C,\Z)(1))\simeq \Z_2^{r-1}\ .$$
 On the other hand one obviously  has $\ker(\deg_{\Z_2})\simeq \Z_2^{r-1}$. The claim follows now from the snake lemma.
\end{proof}
\vspace{3mm} 
\paragraph{\bf The case of a  torus} 
Let now  $T=V/\Lambda$ be a torus, where $V$ is a real vector space, and $\Lambda\subset V$ is a lattice. Let $\tau:\Lambda\to\Lambda$ be a linear involution, and denote by the same symbol the induced  automorphisms of $V$ and  $T$.

In order to describe the fixed point locus $T^\tau$ we use the short exact sequence of $\Z_2$-modules
$$0\map \Lambda \map V \map T \map 0\ .
$$
The corresponding long exact sequence of cohomology groups reads:
$$0\map \Lambda^\tau\map V^\tau\map T^\tau\map H^1_{\Z_2}(\Lambda)\map 0\ .
$$
This shows that $T^\tau$ decomposes as a disjoint union
$$T^\tau=\union_{[\mu]\in H^1_{\Z_2}(\Lambda)} T_{[\mu]}\ ,
$$
where every connected component $T_{[\mu]}$ is a torus isomorphic to the quotient 
$$T_0:=\qmod{V^\tau}{\Lambda^\tau}\ .$$
Hence $H_1(T_{[\mu]},\Z)=\Lambda^\tau$ and $H^1(T_{[\mu]},\Z)=[\Lambda^\tau]^\vee$.

We are interested in  $\tau$-Real line bundles on $T$. Since $H^1(T,\Z)=\Lambda^\vee$, one gets from Corollary \ref{RealLB} the exact sequence
$$0\to H^1_{\Z_2}(\Lambda^\vee(1))\to H^1_{\Z_2}(T,\underline{S}^1(1))\textmap{c_1} H^2(T,\Z)(1)^{\Z_2}\textmap{\ooo} H^2_{\Z_2}(\Lambda^\vee(1))\ .
$$
One has natural identifications
$$H^1_{\Z_2}(\Lambda^\vee(1))=\qmod{[\Lambda^\vee]^{\tau^*}}{(\id+\tau^*)\Lambda^\vee}\ ,\  H^2(T,\Z)(1)^{\Z_2}= \wedge^2\Lambda^\vee(1)^{\Z_2}\ ,$$
$$ H^2_{\Z_2}(\Lambda^\vee(1))=H^1_{\Z_2} (\Lambda^\vee)=\qmod{[\Lambda^\vee]^{-\tau^*}}{(\id-\tau^*)\Lambda^\vee}\ .
$$
 
We will see that  on a torus  the obstruction map $\oo$ vanishes (see Proposition \ref{exRealstr}). 
For every $\tau$-Real line  bundle $(L,\tilde\tau)$ on $T$, we have an associated Stiefel-Whitney class of the   fixed point bundle $L^{\tilde\iota}$ on $T^\tau$, which is an element $w(L,\tilde\tau)\in H^1(T^\tau,\Z_2)$. Such an element can be regarded as a map
$$w(L,\tilde\tau): \qmod{\Lambda^{-\tau}}{(\id-\tau)\Lambda}\to\Hom(\Lambda^\tau,\Z_2)\ .
$$

In section \ref{RLBT} we will give an explicit description of the group $H^1_{\Z_2}(T,\underline{S}^1(1))$ of isomorphism classes of $\tau$-Real Hermitian line bundles in terms of characteristic classes. We will show that the group morphism
$$H^1_{\Z_2}(T,\underline{S}^1(1))\textmap{cw} H^2(T,\Z)(1)^{\Z_2}\times  H^1(T^\tau,\Z_2)=$$
$$=\wedge^2\Lambda^\vee(1)^{\Z_2}\times\mathrm{Map}\left(\qmod{\Lambda^{-\tau}}{(\id-\tau)\Lambda}\ ,\ \Hom(\Lambda^\tau,\Z_2)\right)
$$
is injective, and we will determine explicitly its image. In particular we will describe the set of maps 
$$
\qmod{\Lambda^{-\tau}}{(\id-\tau)\Lambda}\map \Hom(\Lambda^\tau,\Z_2) 
$$
which correspond to $\tau$-Real line bundles on $T$, i.e., which have the form $w(L,\tilde\tau)$  for a $\tau$-Real line bundle $(L,\tilde\tau)$ on $T$.
\subsection{Real line bundles and connections}
\begin{pr} \label{YMI} Let $(X,\iota)$  be a differentiable manifold endowed with an involution. Let $L$ be a Hermitian line bundle on  $X$ whose Chern class satisfies $\iota^*(c_1(L))=-c_1(L)$, and let ${\cal B}(L)$ be the moduli space of Hermitian connections on $L$. Then
\begin{enumerate}
\item $\iota$ induces a well defined involution $\hat \iota:{\cal B}(L)\to{\cal B}(L)$.
\item Suppose   $X^\iota\ne\emptyset$. The following conditions are equivalent:
\begin{enumerate}
\item ${\cal B}(L)^{\hat \iota}\ne\emptyset$.
\item $L$ admits $\iota$-Real  structures.
\end{enumerate}
\item If one of the two equivalent conditions above is satisfied, then the set of isomorphism classes of $\iota$-Real structures on $L$ can be identified with $\pi_0({\cal B}(L)^{\hat\iota})$. 
\end{enumerate}
\end{pr}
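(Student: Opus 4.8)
The plan is to realise both the involution $\hat\iota$ and the Real structures through the \emph{action of $\iota$ on connections}, exploiting that $\mathcal{A}(L)$ is an affine space over the real vector space $iA^1(X)$ on which the abelian gauge group $\mathcal{G}=\mathcal{C}^\infty(X,S^1)$ acts with stabiliser exactly the (locally) constant subgroup $S^1\subset\mathcal{G}$, and that $\mathcal{B}(L)=\mathcal{A}(L)/\mathcal{G}$. Since $\iota^*c_1(L)=-c_1(L)=c_1(\bar L)$ one has $\iota^*L\simeq\bar L$, so the set $\mathcal{S}$ of fibrewise antilinear isometries $\sigma:L\to L$ covering $\iota$ is nonempty and is a torsor under $\mathcal{G}$ via $u\cdot\sigma=u\sigma$. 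For $\sigma\in\mathcal{S}$ I define $\sigma^\bullet:\mathcal{A}(L)\to\mathcal{A}(L)$ by conjugation, $\nabla^{\sigma^\bullet(A)}=\sigma\circ\nabla^A\circ\sigma^{-1}$: because $\sigma$ is an isometry this is again a Hermitian connection, and because $\sigma$ is antilinear its curvature is $-\iota^*F_A$, so it lives on $L$. The identity $\sigma g\sigma^{-1}=\iota^*(\bar g)$ gives the equivariance $\sigma^\bullet(g\cdot A)=\iota^*(\bar g)\cdot\sigma^\bullet(A)$; as $g\mapsto\iota^*(\bar g)$ is a bijection of $\mathcal{G}$, $\sigma^\bullet$ descends to a map $\hat\iota:\mathcal{B}(L)\to\mathcal{B}(L)$. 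Two choices of $\sigma$ differ by an element of $\mathcal{G}$, hence induce the \emph{same} $\hat\iota$; and $(\sigma^\bullet)^2$ is the gauge action of $\phi_\sigma:=\sigma^2\in\mathcal{G}$, trivial on $\mathcal{B}(L)$. This proves (1).

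For (2) the implication (b)$\Rightarrow$(a) is easy: a Real structure $\tilde\iota$ lies in $\mathcal{S}$ with $\tilde\iota^2=1$, so $\tilde\iota^\bullet$ is an \emph{honest} affine involution of $\mathcal{A}(L)$ over $iA^1(X)$, its fixed locus is nonempty by averaging $\tfrac12(A+\tilde\iota^\bullet A)$, and the class of an invariant connection lies in $\mathcal{B}(L)^{\hat\iota}$. The substance is (a)$\Rightarrow$(b). Given $[A]\in\mathcal{B}(L)^{\hat\iota}$ and a fixed $\sigma$, write $\sigma^\bullet(A)=g\cdot A$. Applying $\sigma^\bullet$ once more and comparing with $(\sigma^\bullet)^2=\phi_\sigma$ shows that $\phi_\sigma$ and $g\,\iota^*(\bar g)$ act identically on $A$, hence differ by a constant $c\in S^1$: $\phi_\sigma=c\,g\,\iota^*(\bar g)$. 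I then set $\tilde\iota:=g^{-1}\sigma\in\mathcal{S}$; a direct computation gives $\tilde\iota^\bullet(A)=A$ and $\tilde\iota^2=c$. It remains to force $c=1$, and this is exactly where $X^\iota\ne\emptyset$ enters, in the same way as in the $d_2$-lemma preceding Corollary \ref{RealLB}: choosing $x_0\in X^\iota$ and a unitary identification $L_{x_0}\simeq\C$, every antilinear isometry of $L_{x_0}$ squares to the identity, so $\tilde\iota^2(x_0)=1$; since $c$ is constant this gives $c\equiv1$. Thus $\tilde\iota$ is an antilinear isometric involution covering $\iota$, i.e.\ a $\iota$-Real structure, with $A$ invariant. (For disconnected $X$ one argues componentwise, using the freedom in $g^{-1}\sigma$ up to locally constant unit scalars.)

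For (3) I would build mutually inverse maps. In one direction send $\tilde\iota$ to the component of $\mathcal{B}(L)^{\hat\iota}$ containing the connected affine subspace $\mathcal{A}(L)^{\tilde\iota}$; an isomorphic structure $\tilde\iota'=g\tilde\iota g^{-1}$ has $\mathcal{A}(L)^{\tilde\iota'}=g\cdot\mathcal{A}(L)^{\tilde\iota}$, giving the same component, so this is well defined on isomorphism classes. Conversely the construction of (a)$\Rightarrow$(b) assigns to each $[A]\in\mathcal{B}(L)^{\hat\iota}$ the Real structure $\tilde\iota_A=g^{-1}\sigma$; one checks $\tilde\iota_{h\cdot A}=h\,\tilde\iota_A\,h^{-1}$ and that $\tilde\iota_A$ is unchanged upon replacing $\sigma$ by $v\sigma$, so the isomorphism class $[\tilde\iota_A]$ is independent of all choices. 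Since the set of isomorphism classes of Real structures on $L$ is the fibre of $c_1$ over $c_1(L)$, a torsor under the \emph{discrete} group $\ker(c_1)=H^1_{\Z_2}(\mathcal{G}(1))$ of Proposition \ref{firstES}, and $\tilde\iota_A$ can be chosen continuously along any path in $\mathcal{B}(L)^{\hat\iota}$, the map $[A]\mapsto[\tilde\iota_A]$ is locally constant and factors through $\pi_0$. The two maps are visibly inverse: $A$ is $\tilde\iota_A$-invariant, and the class attached to $\tilde\iota$ is recovered from any $\tilde\iota$-invariant connection.

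I expect the main obstacle to be precisely the constant-evaluation step in (a)$\Rightarrow$(b): isolating the scalar $c$ from the relation $\phi_\sigma=c\,g\,\iota^*(\bar g)$ (which forces one to track the $S^1$-stabiliser carefully) and then killing it via a fixed point $x_0$. This is the analogue, at the level of connections, of the $d_2$-computation showing $[\phi]\ne[-1]$, and it is what makes the hypothesis $X^\iota\ne\emptyset$ indispensable; the local-constancy claim underlying (3) is then a soft consequence of continuity into the discrete torsor $H^1_{\Z_2}(\mathcal{G}(1))$.
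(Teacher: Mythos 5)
Your proposal is correct and follows essentially the same route as the paper's proof: fix an antilinear lift $\sigma$ of $\iota$ to define $\hat\iota$ on $\mathcal{B}(L)$, correct $\sigma$ by the gauge transformation $g$ to produce $\tilde\iota_A=g^{-1}\sigma$ fixing $A$, evaluate at $x_0\in X^\iota$ to force $\tilde\iota_A^2=1$, and identify $\pi_0(\mathcal{B}(L)^{\hat\iota})$ with $\mathcal{R}/\mathcal{G}$ using local constancy of $[A]\mapsto[\tilde\iota_A]$ together with connectedness of the affine space of $\tilde\iota$-invariant connections. The only cosmetic differences are that you merge the paper's two-step sign argument ($z=\bar z$, then $z\neq-1$) into the single observation that an antilinear isometry of a complex line squares to the identity, and you justify local constancy by discreteness of the target rather than by the paper's explicit continuity of $[g]$ in $A$.
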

\begin{proof} (1)  We denote by $c:S^1\to S^1$ the conjugation automorphism. Fix a   $\iota$-covering  anti-isomorphism   $f:L\to L$, and denote by the same symbol the induced $\iota$-covering  type $c$-isomorphism   $P_L\to P_L$ between associated principal bundles.  

For a connection $A$ on $P_L$ we define
$$\hat\iota([A]):=[f^*(A)]\ ,
$$
where $f^*(A)$ is the  pull-back connection in the sense of [KN]. In terms of connection forms one has
$$\theta_{f^*(A)}=-f^*(\theta_A)\ .
$$
Since $f$ is well defined up to composition which a gauge transformation, it follows that $\hat\iota$ is well-defined. Since $f\circ f$ is a gauge transformation, it follows that $\hat\iota$ is an involution as claimed. 
\\
\\
(2), (3) Let ${\cal R}$ be the space of $\iota$-Real structures on $L$. The gauge group ${\cal G}$ acts on ${\cal R}$ by conjugation, and the set of isomorphism classes of $\iota$-Real structures on $L$ is  the quotient ${\cal R}/{\cal G}$.

In order to prove (2) and (3) it suffices to construct a surjective map 
$$F:{\cal B}(L)^{\hat\iota}\to {\cal R}/{\cal G}$$
 whose fibers are the connected components ${\cal B}(L)^{\hat\iota}$. Let $A\in{\cal A}(L)$ such that $[A]\in {\cal B}^{\hat\iota}$. It follows that there exists a gauge transformation $g\in{\cal G}$ such that $A=g^*f^*(A)$, in other words, $A=\tilde\iota^*_A(A)$ where $\tilde\iota_A:=f\circ g$ is a type-$c$ $\iota$-covering isomorphism.  Note that $\tilde\iota_A$  is well defined up to multiplication with constant elements $\zeta\in S^1$. The composition $\tilde\iota_A\circ \tilde\iota_A$ is an $A$-parallel gauge transformation, so it is a constant gauge transformation $z\id_L$. Let $x\in X^\iota$ and $v\in L_x$. One has
$$(\tilde \iota_A\circ\tilde \iota_A\circ \tilde\iota_A)(v)=\tilde \iota_A((\tilde \iota_A\circ \tilde\iota_A)(v))=\tilde\iota_A(zv)=\bar z \tilde\iota_A(v)=(\tilde \iota_A\circ \tilde \iota_A)( \tilde\iota_A(v))=z \tilde\iota_A(v)\ ,
$$
hence $z\in \R\cap S^1=\{\pm 1\}$. But  $L_x$  is a  complex line, so it does not admit any anti-linear automorphism with square $-\id_{L_x}$. This shows that $z=1$, so $\tilde\iota_A$ is an involution. Since it is also $\iota$-covering and anti-linear, we get  $\tilde\iota_A\in{\cal R}$.  Replacing $A$ by a gauge equivalent connection  produces an $\iota$-Real structure  which is conjugate to $\tilde\iota_A$, so that we get a well defined element $F([A]):=[\tilde \iota_A]$. 

To see that the map $F$ is surjective  note that a $\iota$-Real structure $\tilde\iota$ defines an involution on the affine space ${\cal A}(L)$. But any  involution on an affine space   has fixed points. For an $\tilde \iota$-invariant connection $A$ one gets obviously $F([A])=[\tilde\iota]$.

It is easy to see that $F$ is continuous with respect to the quotient topologies. Indeed, the $\iota$-real structure $\tilde\iota$ associated with $A$ is $\tilde\iota=f\circ g$, where
$$g^{-1}dg=A-f^*(A)\ .
$$
This shows that the class $[g]\in{\cal G}/S^1$ depends continuously on $A\in{\cal A}(L)$. On the other hand, the equivalence class $[f\circ g]\in{\cal R}/{\cal G}$ depends only on $[g]\in{\cal G}/S^1$, so it depends continuously on $A$ as claimed. 
 
 Since the quotient topology on ${\cal R}/{\cal G}$ is discrete,   $F$ is constant on the connected components of ${\cal B}(L)^{\hat\iota}$.
 
 It remains to prove that the fibers of $F$ are  connected. Let $A$, $B\in{\cal A}(L)$  such that $F([A])=F([B])$. It follows that there exists $g$, $h$, $k\in{\cal G}$ such that
 $$A=(f\circ g)^*(A)\ ,\ B=(f\circ h)^*(B)\ ,\ f\circ g=k\circ(f\circ h)\circ k^{-1}\ ,
 $$
which implies $ k^*(A)=(f\circ h)^*\circ k^*(A)$. Therefore the connections $k^*(A)\sim A$ and $B$ are both $\tilde \iota$-invariant, where $\tilde \iota$ is the $\iota$-Real structure $f\circ h$. But it is easy to see that the space of $\tilde \iota$-invariant connections in ${\cal A}(L)$ is an affine subspace with model linear space $iA^1(X,\R)^{-\iota^*}$, hence this space is connected as claimed.
 \end{proof}
Suppose now that $X$ is a closed manifold endowed with a $\iota$-invariant Riemannian metric $g$.  A  statement similar to the one  above holds when one replaces the infinite dimensional space ${\cal B}(L)$ with the  moduli space ${\cal T}(L)$ of Yang-Mills connections on $L$, which is isomorphic to a $b_1(X)$-dimensional torus. Note that the inclusion map $${\cal T}(L)\hookrightarrow {\cal B}(L)$$  is a homotopy equivalence.  

 \begin{co}  \label{YMT} Let $(X,\iota)$  be a closed Riemannian manifold endowed with an isometric involution.  Let $L$ be a Hermitian line bundle on  $X$ whose Chern class satisfies $\iota^*(c_1(L))=-c_1(L)$, and let ${\cal T}(L)$ be the moduli space of Yang-Mills connections on $L$. Then
\begin{enumerate}
\item $\iota$ induces a well defined involution $\hat \iota:{\cal T}(L)\to{\cal T}(L)$.
\item Suppose   $X^\iota\ne\emptyset$. The following conditions are equivalent:
\begin{enumerate}
\item ${\cal T}(L)^{\hat \iota}\ne\emptyset$.
\item $L$ admits $\iota$-Real  structures.
\end{enumerate}
\item If one of the two equivalent conditions above is satisfied, then the set of isomorphism classes of $\iota$-Real structures on $L$ can be identified with $\pi_0({\cal T}(L)^{\hat\iota})$. 
\end{enumerate}
 \end{co}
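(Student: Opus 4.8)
The plan is to deduce Corollary \ref{YMT} directly from Proposition \ref{YMI} by exploiting the stated homotopy equivalence ${\cal T}(L)\hookrightarrow{\cal B}(L)$, rather than repeating the gauge-theoretic construction of the map $F$. Since the metric $g$ is $\iota$-invariant, the Hodge star commutes with $\iota^*$ up to the sign dictated by the reversal of orientation, so the pull-back operation $A\mapsto f^*(A)$ underlying $\hat\iota$ in Proposition \ref{YMI}(1) preserves the Yang-Mills (here, harmonic-curvature) condition. This gives item (1): the involution $\hat\iota$ on ${\cal B}(L)$ restricts to a well-defined involution on the subspace ${\cal T}(L)$, and one checks that the inclusion ${\cal T}(L)\hookrightarrow{\cal B}(L)$ is $\Z_2$-equivariant.

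First I would verify that the inclusion is compatible with $\hat\iota$ and then argue that it induces a homotopy equivalence on fixed-point loci. The key observation is that for an abelian (hence flat-curvature-up-to-topology) line bundle the Yang-Mills moduli space ${\cal T}(L)$ is the translate of the torus $H^1(X,\R)/H^1(X,\Z)$ of flat connections, and ${\cal B}(L)$ deformation-retracts onto ${\cal T}(L)$ by the Hodge decomposition $A^1(X,\R)=\ker(d^*)\oplus\im(d)$; both the retraction and the homotopy can be chosen $\hat\iota$-equivariantly because the Hodge decomposition is preserved by the isometric involution. An equivariant deformation retraction restricts to an ordinary deformation retraction on fixed-point sets, so the inclusion ${\cal T}(L)^{\hat\iota}\hookrightarrow{\cal B}(L)^{\hat\iota}$ is a homotopy equivalence; in particular it induces a bijection on $\pi_0$ and is nonempty iff ${\cal B}(L)^{\hat\iota}$ is nonempty.

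With this in place, items (2) and (3) follow immediately: by Proposition \ref{YMI}(2), ${\cal B}(L)^{\hat\iota}\ne\emptyset$ is equivalent to $L$ admitting $\iota$-Real structures, and since ${\cal T}(L)^{\hat\iota}\hookrightarrow{\cal B}(L)^{\hat\iota}$ is a homotopy equivalence (hence a $\pi_0$-bijection), ${\cal T}(L)^{\hat\iota}\ne\emptyset$ is equivalent to the same condition. For item (3), Proposition \ref{YMI}(3) identifies the set of isomorphism classes of $\iota$-Real structures on $L$ with $\pi_0({\cal B}(L)^{\hat\iota})$; composing with the $\pi_0$-isomorphism induced by the homotopy equivalence yields the identification with $\pi_0({\cal T}(L)^{\hat\iota})$. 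Concretely, the map $F$ of Proposition \ref{YMI} can simply be restricted to ${\cal T}(L)^{\hat\iota}$, and the restriction is still surjective with connected fibres because every $\iota$-Real structure $\tilde\iota$ admits an $\tilde\iota$-invariant Yang-Mills connection: the affine space of $\tilde\iota$-invariant connections meets the Yang-Mills torus, as the $\iota$-invariant metric guarantees that the harmonic representative of any $\tilde\iota$-invariant class is itself $\tilde\iota$-invariant.

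The main obstacle I anticipate is making the equivariance of the deformation retraction ${\cal B}(L)\to{\cal T}(L)$ fully rigorous, i.e. checking that the Hodge-theoretic retraction genuinely commutes with $\hat\iota$ given that $\iota$ reverses orientation and hence introduces a sign in its action on $\ast$ and on $2$-forms. One must confirm that this sign is exactly the one already built into the twisting $(1)$ of the relevant cohomology and into the condition $\iota^*(c_1(L))=-c_1(L)$, so that the space of Yang-Mills connections is preserved rather than sent to a different component. Once the sign bookkeeping is settled, the remainder is a formal consequence of the equivariant homotopy equivalence together with Proposition \ref{YMI}.
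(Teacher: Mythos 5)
Your proposal is correct and follows essentially the same route as the paper: the paper simply states the corollary after Proposition \ref{YMI} with the remark that the inclusion ${\cal T}(L)\hookrightarrow{\cal B}(L)$ is a homotopy equivalence, and your argument fills in exactly the intended details (equivariance of the Hodge-theoretic retraction under the isometric involution, hence a $\pi_0$-bijection on fixed-point loci, plus the existence of a $\tilde\iota$-invariant Yang--Mills representative). The sign bookkeeping you flag is resolved precisely by the hypothesis $\iota^*(c_1(L))=-c_1(L)$ together with $\theta_{f^*(A)}=-f^*(\theta_A)$, so there is no gap.
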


 It is useful to  consider the disjoint union of all Yang-Mills tori
 $${\cal T}_X:=\coprod_{  c\in H^2(X,\Z)}{\cal T}(L_c)\ ,$$
 where $L_c$ denotes a Hermitian line bundle with Chern class $c$. This union comes with a well defined involution $\hat \iota$ defined  as the composition of the usual pull-back of connections:
 $${\cal A}(L_c)\ni A\mapsto  \iota^*(A)\in {\cal A}(\iota^*( L_c)) 
 $$
with the canonical  identification ${\cal A}(L)={\cal A}(\bar L)$ induced by  the equality between the total spaces of the principal  bundles $P_L$, $P_{\bar L}$.  Using Corollary \ref{YMT} we obtain

\begin{pr}\label{YM-Real}  Under the conditions and with the notations of Corollary  \ref{YMT} the assignment  $A\mapsto [\tilde\iota_A]$ defines a group morphism 
 $$F_X: {\cal T}_X^{\hat \iota}\map  H^1_{\Z_2} (X,\underline{S}^1(1)) $$
 from the fixed point locus  ${\cal T}_X^{\hat \iota}$ to the group of isomorphism classes of $\iota$-Real Hermitian line bundles  on $X$. This morphism induces an  isomorphism
 $$f_X:\pi_0( {\cal T}_X^{\hat \iota})\textmap{\simeq}  H^1_{\Z_2} (X,\underline{S}^1(1))\ .
 $$
 \end{pr}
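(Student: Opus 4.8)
The plan is to reduce the statement to a componentwise application of Corollary \ref{YMT}, organized by the grading of ${\cal T}_X$ over $H^2(X,\Z)$ (and assuming, as in Corollaries \ref{YMT} and \ref{RealLB}, that $X^\iota\ne\emptyset$). First I would describe the fixed locus ${\cal T}_X^{\hat\iota}$. Since the involution $\hat\iota$ sends the summand ${\cal T}(L_c)$ to ${\cal T}(L_{-\iota^*c})$, because $c_1(\overline{\iota^*L_c})=-\iota^*c_1(L_c)$, a point of ${\cal T}_X$ can be $\hat\iota$-fixed only when $\iota^*c=-c$. Hence
$$ {\cal T}_X^{\hat\iota}=\coprod_{c\in H^2(X,\Z)(1)^{\Z_2}}{\cal T}(L_c)^{\hat\iota}\ , $$
and on each such summand $\hat\iota$ is exactly the involution of Corollary \ref{YMT}. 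Passing to $\pi_0$ gives $\pi_0({\cal T}_X^{\hat\iota})=\coprod_c\pi_0({\cal T}(L_c)^{\hat\iota})$, where by Corollary \ref{YMT}(2),(3) the $c$-summand is empty unless $L_c$ admits $\iota$-Real structures, in which case it is in canonical bijection with the set of isomorphism classes of such structures via the tautological assignment $\tilde\iota\mapsto[L_c,\tilde\iota]$.

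Next I would verify that $F_X$ is a group morphism. The tensor product of line bundles carrying Yang--Mills connections makes ${\cal T}_X$ an abelian group graded by $H^2(X,\Z)$ (curvatures add, harmonic plus harmonic is harmonic), and $\hat\iota$ is a group automorphism since both the pull-back of connections and the identification ${\cal A}(L)={\cal A}(\bar L)$ are multiplicative; therefore ${\cal T}_X^{\hat\iota}$ is a subgroup. For $[A]\in{\cal T}(L_c)^{\hat\iota}$ and $[B]\in{\cal T}(L_{c'})^{\hat\iota}$ one builds $\tilde\iota_A=f_c\circ g_A$, $\tilde\iota_B=f_{c'}\circ g_B$ as in the proof of Proposition \ref{YMI}, where $g^{-1}dg=A-f^*(A)$. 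Choosing the auxiliary anti-isomorphisms compatibly, $f_{c+c'}=f_c\otimes f_{c'}$, the defining equation is additive under tensor products, so $\tilde\iota_{A\otimes B}=\tilde\iota_A\otimes\tilde\iota_B$. Since the group law on $H^1_{\Z_2}(X,\underline{S}^1(1))$ is the tensor product of Real line bundles, this gives $F_X([A]\otimes[B])=F_X([A])\cdot F_X([B])$, so $F_X$, and hence the induced $f_X$ on $\pi_0$, is a morphism.

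To prove that $f_X$ is an isomorphism I would match the two gradings. By Corollary \ref{RealLB} the map $c_1$ fibers $H^1_{\Z_2}(X,\underline{S}^1(1))$ over $\ker(\oo)\subset H^2(X,\Z)(1)^{\Z_2}$, the fiber over $c$ being exactly the set of isomorphism classes of Real structures on $L_c$, which is nonempty precisely when $c\in\ker(\oo)$. By exactness in Corollary \ref{RealLB}, $c\in\ker(\oo)$ iff $c$ is realized by some Real line bundle, iff $L_c$ admits a Real structure, iff (Corollary \ref{YMT}(2)) ${\cal T}(L_c)^{\hat\iota}\ne\emptyset$; so the two disjoint-union decompositions are indexed by the same set. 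Over each common index $c$, Corollary \ref{YMT}(3) identifies $\pi_0({\cal T}(L_c)^{\hat\iota})$ with the fiber $c_1^{-1}(c)$ through the very same tautological construction, and $F_X$ visibly respects $c_1$. Assembling these componentwise bijections shows that $f_X$ is bijective, hence, with the previous paragraph, a group isomorphism.

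The hardest part will be the additivity in the second paragraph, i.e.\ showing that $A\mapsto\tilde\iota_A$ is a morphism: the rest is bookkeeping assembling Corollaries \ref{YMT} and \ref{RealLB} across the $H^2(X,\Z)$-grading. Here one must make coherent choices of the anti-isomorphisms $f_c$ over all $c$ and check that the resulting $\iota$-Real structures multiply correctly \emph{up to conjugation} — which is precisely the equivalence built into $H^1_{\Z_2}(X,\underline{S}^1(1))$. The continuity established in Proposition \ref{YMI} guarantees that the construction descends to $\pi_0$, so only the induced map $f_X$ needs to be examined; and one should confirm that the bijection $\pi_0({\cal T}(L_c)^{\hat\iota})\simeq c_1^{-1}(c)$ is induced by the same tautological map $\tilde\iota\mapsto[L_c,\tilde\iota]$ on both sides, so that the componentwise identifications glue into a single isomorphism.
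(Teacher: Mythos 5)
Your proposal is correct and follows essentially the same route as the paper, which states this proposition as an immediate consequence of Corollary \ref{YMT} ("Using Corollary \ref{YMT} we obtain...") without writing out the details. What you supply — the decomposition of ${\cal T}_X^{\hat\iota}$ over the anti-invariant classes $c$, the compatibility of $A\mapsto\tilde\iota_A$ with tensor products, and the matching of the $c_1$-fibres via Corollaries \ref{YMT} and \ref{RealLB} — is exactly the bookkeeping the paper leaves implicit, and it is carried out correctly.
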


This result has an important complex geometric analogon:
\begin{pr} \label{FC} Let $X$ be a compact connected  complex manifold endowed with an anti-holomorphic involution $\iota:X\to X$. Suppose that $X^\iota\ne\emptyset$.  Consider the  induced anti-holomorphic involution $\hat\iota:\Pic(X)\to\Pic(X)$ defined by
$$\hat\iota([{\cal L}]):=[{\iota^*(\overline{\cal L})}]\ .$$
Let ${\cal L}$ be a holomorphic line bundle on $X$ with $[{\cal L}]\in \Pic(X)^{\hat\iota}$. Then 
\begin{enumerate}
\item There exists an anti-holomorphic $\iota$-Real structure $\tilde\iota_{\cal L}$ on ${\cal L}$, which is unique up to multiplication with constant elements $\zeta\in S^1$. 
\item  The assignment  $[{\cal L}]\mapsto [({\cal L},\tilde\iota_{\cal L})]$ defines a group morphism 
$$\Fg_X:\Pic(X)^{\hat\iota}\map H^1_{\Z_2}(X,\underline{S}^1(1))
$$
which maps  the fixed point locus  $\Pic(X)^{\hat\iota}$ to the set of isomorphism classes of $\iota$-Real Hermitian line bundles.
\item   The induced map $\fg_X:\pi_0(\Pic(X)^{\hat\iota})\to H^1_{\Z_2}(X,\underline{S}^1(1))$ defines a bijection between  $\pi_0(\Pic(X)^{\hat\iota})$ and the set of isomorphism classes of $\iota$-Real line bundles with Chern class  in the Neron-Severi group $\NS(X)$ of $X$.
\end{enumerate}
\end{pr}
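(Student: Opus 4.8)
The plan is to handle the three parts in order. Parts (1) and (2) I would do by a direct complex-geometric argument mirroring the gauge-theoretic reasoning in the proof of Proposition \ref{YMI}; part (3) I would obtain by transporting Proposition \ref{YM-Real} across the Kobayashi-Hitchin correspondence.

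For (1), note that an anti-holomorphic, fibrewise anti-linear bundle map $\psi:{\cal L}\to{\cal L}$ covering $\iota$ is exactly the same datum as a holomorphic isomorphism ${\cal L}\to\iota^*(\overline{\cal L})$, and such an isomorphism exists precisely because $\hat\iota[{\cal L}]=[\iota^*(\overline{\cal L})]=[{\cal L}]$. The composite $\psi\circ\psi$ is then a $\C$-linear holomorphic automorphism of ${\cal L}$ over $\id_X$, so, $X$ being compact and connected, it equals $c\cdot\id$ for some $c\in\C^*$. Writing $\psi^3$ in the two ways $\psi\circ(c\,\id)=\bar c\,\psi$ and $(c\,\id)\circ\psi=c\,\psi$ forces $c=\bar c$, and evaluating at a point $x_0\in X^\iota$ (where $\psi|_{{\cal L}_{x_0}}$ is anti-linear, hence its square is multiplication by $|\alpha|^2\ge 0$, nonzero since $\psi$ is an isomorphism) forces $c>0$; exactly as in the proof of Proposition \ref{YMI} this is where $X^\iota\ne\emptyset$ is used. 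Then $\tilde\iota_{\cal L}:=c^{-1/2}\psi$ is an involution, i.e. an anti-holomorphic $\iota$-Real structure. Uniqueness up to $S^1$ follows because two such structures differ by a $\C$-linear holomorphic automorphism, i.e. a constant $\mu\in\C^*$, and the involution condition gives $|\mu|^2=1$.

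For (2), the map $\Fg_X$ is well defined: the $S^1$-ambiguity in $\tilde\iota_{\cal L}$ does not change the smooth isomorphism class (rescale by a square root in $S^1$), and holomorphically isomorphic line bundles transport isomorphic Real structures. The homomorphism property follows from uniqueness: $\tilde\iota_{\cal L}\otimes\tilde\iota_{\cal M}$ is an anti-holomorphic $\iota$-Real structure on ${\cal L}\otimes{\cal M}$, hence agrees up to $S^1$ with $\tilde\iota_{{\cal L}\otimes{\cal M}}$, and since the group law on $H^1_{\Z_2}(X,\underline{S}^1(1))$ is the tensor product of Real line bundles, $\Fg_X$ is additive.

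For (3), I would first observe that $\Fg_X$ is locally constant, since by Corollary \ref{RealLB} the group $H^1_{\Z_2}(X,\underline{S}^1(1))$ is finitely generated abelian (the classifying invariants are discrete) while $\Pic(X)^{\hat\iota}$ is a manifold; this produces $\fg_X$ on $\pi_0$. To obtain the bijection I would fix a $\iota$-invariant Gauduchon metric and use the Hodge-theoretic Kobayashi-Hitchin correspondence for line bundles: sending a holomorphic line bundle to its Hermitian-Einstein connection identifies $\Pic(X)$, $\hat\iota$-equivariantly, with the subunion $\coprod_{c\in\NS(X)}{\cal T}(L_c)\subset{\cal T}_X$ of Yang-Mills tori whose Chern class lies in $\NS(X)$. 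By uniqueness of the Hermitian-Einstein metric, $\tilde\iota_{\cal L}$ preserves it and hence induces the connection-theoretic structure $\tilde\iota_A$, so $\Fg_X$ is the restriction of the morphism $F_X$ of Proposition \ref{YM-Real}. Since $f_X:\pi_0({\cal T}_X^{\hat\iota})\to H^1_{\Z_2}(X,\underline{S}^1(1))$ is an isomorphism and $c_1\circ f_X$ records the Chern class, restricting to the components lying over $\NS(X)$ gives precisely a bijection between $\pi_0(\Pic(X)^{\hat\iota})$ and the classes of $\iota$-Real line bundles with Chern class in $\NS(X)$. The main obstacle is this last step: making the equivariant Kobayashi-Hitchin identification precise and verifying that the anti-holomorphic $\tilde\iota_{\cal L}$ matches the gauge-theoretic $\tilde\iota_A$. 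A self-contained alternative would instead reduce injectivity to the torus computation of Section \ref{examples}, identifying $\pi_0(\Pic^0(X)^{\hat\iota})$ with $H^1_{\Z_2}(H^1(X,\Z)(1))$ — where the delicate point is tracking the conjugation twist in the $\Z_2$-action on $H^1(X,\Z)$ — and, for surjectivity, handling those $c\in\NS(X)$ admitting no $\hat\iota$-fixed line bundle (exactly when $L_c$ carries no $\iota$-Real structure), a case the comparison route disposes of automatically via Corollary \ref{YMT}.
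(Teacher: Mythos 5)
Your proposal is correct and follows essentially the same route as the paper: parts (1) and (2) are the complex-geometric transcription of the $\tilde\iota_A$ construction from Proposition \ref{YMI} (which is all the paper itself says for these parts), and part (3) is exactly the paper's argument of identifying $\Fg_X$ with the restriction of $F_X$ from Proposition \ref{YM-Real} via the Kobayashi--Hitchin correspondence, with the same caveat for the non-K\"ahler case. The only minor imprecision is that for a general Gauduchon metric the Hermite--Einstein moduli spaces are not literally the Yang--Mills tori ${\cal T}(L_c)$ (this identification holds in the K\"ahler case, which covers all applications in the paper); in the non-K\"ahler case one must, as the paper notes, replace ${\cal T}_X$ by ${\cal T}_X^{HE}$ and check that the analogue of Corollary \ref{YMT} still holds.
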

\begin{proof} 
(1), (2) The construction of the $\iota$-Real structure $\tilde\iota_{\cal L}$ is similar to the construction of  the $\iota$-Real structure $\tilde\iota_A$ in Proposition \ref{YMI}. \\ \\
(3)  If $X$ is Kählerian, the statement follows from Corollary \ref{YM-Real}. Indeed, on Kählerian manifolds the space of Hermite-Einstein connections coincides with the space of  integrable Yang-Mills connections. Therefore, using the Kobayashi-Hitchin correspondence for line bundles \cite{LT}, we see that   $\Pic(X)$ can be identified  with the (open and closed) subgroup of the Yang-Mills group ${\cal T}_X$ consisting  of gauge equivalence classes of Yang-Mills connections on line bundles with Chern class of type (1,1).

For the non-Kählerian case, one has  to replace the Yang-Mills group  ${\cal T}_X$ with the group ${\cal T}_X^{HE}$ of gauge-equivalence classes of Hermite-Einstein connections, and to see that the analogue of Corollary \ref{YM-Real} holds, giving a bijection 
$$\pi_0(\Pic(X)^{\hat\iota})=\pi_0(({\cal T}_X^{HE})^{\hat\iota})\to \{\gamma\in H^1_{\Z_2}(\underline{S}^1(1))|\ c_1(\gamma)\in \NS(X)\}\ .$$
\end{proof}

%
%
%
%
%
%

%

%

\section{Real line bundles on a torus}\label{RLBT}

\subsection{Abelian Yang-Mills theory on a torus}

Let $T=V/\Lambda$ be a $n$-dimensional torus, where $V$ is an $n$-dimensional real vector space of dimension $n$ and $\Lambda\subset V$ a rank $n$ lattice such that $\langle\Lambda\rangle_\R=V$.   Let $u\in \mathrm{Alt^2}(\Lambda,\Z)= H^2(T,\Z)$ be an alternating $\Z$-valued form on $\Lambda$; we will denote by the same symbol  the corresponding differential form $u\in A^2(V)$ on $V$, and by $\bar u$ the  differential form on $T$ whose pull-back via the projection $p:V\to T$ is $u$; $\bar u$ is  the harmonic representative of the 2-cohomology class    $u\in H^2(T,\Z)$ with respect to any flat metric  on $T$ induced by an inner product on $V$.

Let $L$ be a Hermitian line bundle of Chern class $u$ on $T$, and ${\cal T}(L)$ the torus of Yang-Mills connections on $L$.

Our first goals are:
\begin{enumerate}
\item describe explicitly the torus ${\cal T}(L)$ of Yang-Mills connections on $L$,
\item for every Yang-Mills class $[A]$ describe the holonomy with respect to $A$ along the loops of the form $p[v_0,v_0+\lambda]\subset T$, $v_0\in V$, $\lambda\in\Lambda$.
\end{enumerate} 

Let $A$ be any Hermitian connection on $L$.  We define a map $\alpha^A:\Lambda\to S^1$  by the condition 
$$h^A_{c_\lambda}( \zeta)=\alpha^A_\lambda \zeta\ ,
$$
where $c_\lambda$ is the loop (based in the origin $0_T\in T$) defined by $c_\lambda(t):=p(\lambda t)$, and  $h^A$ stands for the holonomy associated with the connection $A$. The loops $c_{\lambda'}*c_\lambda$ and $c_{\lambda+\lambda'}$ are homotopic. 

We will show below that  $\alpha^A$ defines a semicharacter. In order to see this we need a general version of the holonomy formula which we recall now briefly:\vspace{4mm}

Let $Y$ be a differentiable manifold,  $L$ a Hermitian line bundle on $Y$ endowed with a Hermitian connection $A$, and let $c:[0,1]\to Y$ be a loop in $Y$ with $c(0)=c(1)=y_0$. Suppose that $c$ is homotopically trivial, i.e., there exists a smooth map 
$C:Q\to Y$, where $Q:=[0,1]\times  [0,1]$,  such that
$$\resto{C}{R}\equiv y_0\ , \ f(\cdot ,1)=c\ . 
$$
Here $R:=(\{0,1\}\times I)\cup (I\times\{0\})=\partial Q\setminus \left[I\times\{1\}\right]$. Then 
\begin{pr} The holonomy with respect to the connection $A$ along a loop $c$ can be computed using the formula
\begin{equation}\label{holof} h_c^A(\zeta)=e^{\int_Q C^*(F_A)} \zeta\ ,\ \forall \zeta\in L_{y_0}\ .
\end{equation}
\end{pr}
 \vspace{4mm}

Using the holonomy formula (\ref{holof}) and supposing that $\lambda$, $\lambda'$ are linearly independent over $\R$, one obtains
\begin{equation}\label{alphaid}
(\alpha^A_{\lambda+\lambda'})^{-1}\alpha^A_{\lambda'}\alpha^A_\lambda=e^{\int_{T(\lambda,\lambda')} p^*(F_A)}\ ,
\end{equation}
where $T(\lambda,\lambda')\subset V$ is the triangle given by the convex hull of the points $0_V$, $\lambda$, $\lambda'$ oriented in the obvious way.

Suppose now that $A$ is Yang-Mills with respect to the flat metric on $T$ induced by any inner product on $V$.  The Yang-Mills condition means that     $\frac{i}{2\pi} F_A$ is the harmonic representative  of the Chern class $c_1(L)\in H^2(T,\Z)$. We denote by $u$ the corresponding element in $\mathrm{Alt}^2(\Lambda,\Z)$ and we agree to use the same symbol for the corresponding constant differential form $u\in A^2(V)$ on $V$. With this notation, the Yang-Mills condition becomes
$$p^*(\frac{i}{2\pi} F_A)= u\ ,
$$
hence the holonomy identities (\ref{alphaid}) become
$$\alpha^A_{\lambda+\lambda'}=\alpha^A_\lambda\alpha^A_{\lambda'} e ^{2\pi i \int_{T(\lambda,\lambda')} u}=\alpha^A_\lambda\alpha^A_{\lambda'} e ^{\pi i u(\lambda,\lambda')}\ .
$$
\begin{dt} A map $\alpha:\Lambda\to S^1$ is called $u$-character if the following identity  holds:
$$\alpha_{\lambda+\lambda'}=\alpha_\lambda\alpha_{\lambda'} e^{\pi i u(\lambda,\lambda')}\ \hbox{ for }\lambda,\ \lambda'\in\Lambda.
$$
\end{dt} 

Note that (if non-empty) the set $\Hom_u(\Lambda,S^1)$ of $u$-characters is a $\Hom(\Lambda,S^1)$-torsor, so it has a natural differentiable structure which makes it  (non-canonically)  diffeomorphic to  the dual torus $V^\vee/\Lambda^\vee$.

\begin{pr} $\Hom_u(\Lambda,S^1)$ is non-empty and the assignment $A\mapsto \alpha^A$ defines a canonical isomorphism  
$$h:{\cal T}(L)\to \Hom_u(\Lambda,S^1)$$
 of differentiable $\Hom(\Lambda,S^1)$-torsors.   
\end{pr}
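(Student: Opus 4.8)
The plan is to prove two things: first, that $\Hom_u(\Lambda,S^1)$ is non-empty, and second, that $A\mapsto\alpha^A$ is a well-defined isomorphism of $\Hom(\Lambda,S^1)$-torsors.

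First I would verify that the map $h$ lands in the correct target and is torsor-equivariant, since this actually delivers non-emptiness for free. The holonomy computation preceding the definition shows that for \emph{any} Yang--Mills connection $A$, the associated map $\alpha^A$ satisfies the $u$-character identity $\alpha^A_{\lambda+\lambda'}=\alpha^A_\lambda\alpha^A_{\lambda'}e^{\pi i u(\lambda,\lambda')}$; I derived this identity (\ref{alphaid}) only under the assumption that $\lambda,\lambda'$ are $\R$-linearly independent, so the first small gap to close is the degenerate case where $\lambda,\lambda'$ are dependent (here the triangle $T(\lambda,\lambda')$ is degenerate, the integral of $u$ vanishes, and one checks the identity directly from the homotopy $c_{\lambda'}*c_\lambda\simeq c_{\lambda+\lambda'}$). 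Granting this, ${\cal T}(L)$ is non-empty because $L$ carries Yang--Mills connections, so $\Hom_u(\Lambda,S^1)$ is non-empty as the image contains $\alpha^A$.

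Next I would establish torsor-equivariance. The torus ${\cal T}(L)$ is a torsor under the group $\Hom(\Lambda,S^1)=H^1(T,\R)/H^1(T,\Z)$ of flat line bundles acting by tensoring; concretely a flat connection corresponds to a character $\chi\in\Hom(\Lambda,S^1)$ via its holonomy, and tensoring $A$ by such a flat connection multiplies holonomies, giving $\alpha^{A\otimes\chi}_\lambda=\chi(\lambda)\,\alpha^A_\lambda$. Since the holonomy of the flat tensor factor is genuinely multiplicative (no curvature correction), this shows $h$ intertwines the $\Hom(\Lambda,S^1)$-action on ${\cal T}(L)$ with the natural $\Hom(\Lambda,S^1)$-torsor structure on $\Hom_u(\Lambda,S^1)$. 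A map of torsors under the same group that is equivariant is automatically bijective, provided it is well-defined; so the only remaining points are well-definedness and differentiability.

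The main obstacle, and the step I would spend the most care on, is \textbf{well-definedness}: I must check that $\alpha^A$ depends only on the gauge-equivalence class $[A]\in{\cal T}(L)$, not on the representative $A$. A gauge transformation $g\in{\cal G}$ conjugates the connection and changes holonomy along a \emph{based} loop by $g(0_T)h^A g(0_T)^{-1}$; since the structure group is abelian this conjugation is trivial, so $\alpha^A$ is genuinely gauge-invariant. (One must also confirm that every Yang--Mills connection is gauge-equivalent to one whose holonomy is measured consistently at the basepoint $0_T$, which is automatic in the abelian case.) Finally, differentiability of $h$ follows because the holonomy $h^A_{c_\lambda}$ depends smoothly on the connection $A$ in the $C^\infty$ topology, and the construction descends to the quotient defining ${\cal T}(L)$; being an equivariant map between two $\Hom(\Lambda,S^1)$-torsors that is smooth and bijective, its inverse is smooth as well, so $h$ is the asserted isomorphism of differentiable torsors.
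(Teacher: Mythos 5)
Your proposal is correct and follows essentially the same route as the paper: non-emptiness from the fact that any Yang--Mills connection yields a $u$-character, the torsor-morphism property from the multiplicativity of holonomy under tensor product with flat connections, and bijectivity as an automatic consequence of equivariance between torsors under the same group. The extra points you raise (the degenerate case of the holonomy identity, gauge-invariance of $\alpha^A$ via triviality of conjugation in an abelian structure group, and smoothness) are details the paper leaves implicit, and you handle them correctly.
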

\pf Since for any Yang-Mills connection $A$ the corresponding map $\alpha^A$ is a $u$-character,  the first statement is clear. Note  that the tensor product of connections defines   a natural ${\cal T}_0$-torsor structure on ${\cal T}(L)$, where ${\cal T}_0$ stands for the torus of flat Yang-Mills connections on $T$. But ${\cal T}_0$ can be identified with $\Hom(\pi_1(T,0_T),S^1)=\Hom(\Lambda,S^1)$ via the holonomy map $h$ by a classical result in differential geometry. It suffices to notice that the tensor product of abelian connections corresponds to the multiplication of holonomies, so $h$ defines a morphism of $\Hom(\Lambda,S^1)$-torsors.
\qed

\begin{re} \label{extension} One can prove directly that the set of $u$-characters is non-empty:

Choose a basis $(e_1,\dots,e_n)$ of $\Lambda$. Then any system $z=(z_i)_{1\leq i\leq n}$  of elements in $S^1$ defines a  unique $u$-character $\alpha$ with the property  $\alpha(e_i)=z_i$.
 
 \end{re}

Our next goal is to describe explicitly the Yang-Mills connection which corresponds to a given $u$-character $\alpha$, and in particular to compute the holonomy associated with this connection along more general loops.

Let $(e_1,\dots,e_n)$ be a $\Z$-basis of $\Lambda$ (so also a $\R$-basis of $V$), and let $(x^1,\dots, x^n)$ be the dual basis of $V^\vee$. Any $x_i$ will be regarded as a smooth function on $V$.

Putting $u_{jk}:= u(e_j,e_k)$ one has
$$u=\frac{1}{2} \sum_{j,k} u_{jk} dx^j\wedge dx^k=\sum_{j<k}  u_{jk} dx^j\wedge dx^k\ .
$$
Let $\mathrm{v}$ be the tangent field on $V$ given by 
$$\mathrm{v}_v=v\in T_v(V)=V\ \ \forall v\in V\ .$$
We define the imaginary differential 1-form $\theta_u$ on $V$ by
$$\theta_u:=-\pi i\iota_\mathrm{v} u=-\pi i\sum_{j,k} u_{jk} x^jdx^k\ .
$$
One has $d\theta_u=-2\pi iu$, which shows that $\theta_u$ is the connection form of a connection $A_u$ on the trivial Hermitian line bundle $V\times\C$ with curvature $F_{A_u}=-2\pi i u$ and Chern form $c_1(A_u)=\frac{i}{2\pi} F_{A_u}=u$. The covariant derivative corresponding to $A_u$ is given by the formula $\nabla_u:= d+ \theta_u$.

In order to  compute the holonomy of $A_u$ along a segment $[v_0, v_0+w]\subset V$, we parameterize this segment by $c_{v_0,w}:[0,1]\to V$, $c_{v_0,w}(t):=v_0+tw$. The covariant derivative of the pull-back connection $c^*(A_u)$ on the trivial line bundle $[0,1]\times\C$ over $[0,1]$ is:   
$$d+c^*(\theta_u)=d-\pi i\sum_{j,k} u_{jk}(v_0+tw)^j w^k dt=d-\pi i\sum_{j,k} u_{jk}v_0^j w^k dt=d-\pi i  u(v_0,w)\ .
$$
Therefore the parallel transport in  the trivial line bundle $V\times\C$  along $c$ with respect to $A_u$ and with the initial condition $\zeta(0)=1$ is defined by the Cauchy problem:
$$\dot \zeta- \pi i u(v_0,w)\zeta=0\ ,\ \zeta(0)=1\ .
$$
This has the obvious solution $\zeta(t)= e^{\pi i u(v_0,w)t}$.   Therefore
\begin{re}\label{holo}
The holonomy 
$$h_{c_{v_0,w}}^{A_u}:\{v_0\}\times\C\to \{v_0+w\}\times \C$$
of the connection $A_u$ along $c_{v_0,w}$ is given by $h_{c_{v_0,w}}^{A_u}(\zeta)= e^{\pi i u(v_0,w)} \zeta$.
\end{re}

The action of the lattice $\Lambda$ on $V$ can be lifted to an action on the trivial line bundle $V\times \C$ by choosing a   factor of automorphy  $e=(e_\lambda)_{\lambda\in\Lambda}$, i.e., a system of functions $e_\lambda:V\to S^1$ satisfying the identities:
\begin{equation}\label{autid}
e_{\lambda'} (v+\lambda) e_\lambda(v)=e_{\lambda+\lambda'}(v)\ \  \forall\ \lambda,\ \lambda'\in\Lambda\ .
\end{equation}
The $\Lambda$-action $\tilde e$ on $V\times\C$ corresponding to a  factor of automorphy $e$ is given by 
$$\lambda\cdot (v,z)=\tilde e_\lambda(v,z):=(v+\lambda,e_\lambda(v)z)\ .
$$

Denote by $L(e)$ the line bundle on $T$ obtained as the $\Lambda$-quotient of the trivial line bundle $V\times\C$ with respect to $\tilde e$. We seek factors of automorphy  $e$ such that the connection $A_u$ descends to a connection $A(e)$ on the quotient line bundle $L(e)$. This is equivalent to the  condition   that $A_u$ is $\tilde e$-invariant.\\

Let $s_0$ be the constant section $s_0(v):=(v,1)$ in $V\times\C$, and denote by $T_\lambda:V\to V$ the translation defined by $\lambda$. The condition $\tilde e_\lambda^*(A_u)=A_u$ is equivalent with:
$$\nabla_u((\tilde e_\lambda)_*(s_0))=(\tilde e_\lambda)_*(\nabla_u(s_0))\ .
$$
Note that $(\tilde e_\lambda)_*(s_0)=(e_\lambda\circ T_{-\lambda})s_0$, hence the invariance condition becomes:
$$d e_\lambda+ e_\lambda T_\lambda^*\theta_u=e_\lambda \theta_u\ ,
$$
or, equivalently,
$$d\log(e_\lambda)=\pi i\ \iota_{\underline{\lambda}} u=d( u(\lambda,\cdot))\ .
$$
Therefore, the factor of automorphy $e$ must have the form
$$e_\lambda(v)=a_\lambda  e^{\pi i u(\lambda,v)}\ ,
$$
where $a_\lambda$ is a constant. Since $e$ must satisfy the cocycle  condition (\ref{autid}) we see that the function $a:\Lambda\to S^1$ has to satisfy the condition 
$$a_{\lambda+\lambda'}=a_\lambda a_{\lambda'} e^{\pi iu(\lambda,\lambda')}\ ,
$$
i.e., $a$ is a $u$-character.   Note that, by Remark \ref{holo}, the holonomy of the connection $A_u$  along the segment $[0,\lambda]\subset V$ is trivial.  Taking into account that, in the construction of $L(e)$, the identification between the fibers $\{0\}\times\C$, $\{\lambda\}\times\C$ is defined by $e_\lambda(0)$, one sees that the $u$-character $\alpha^{A(e)}$ associated to the Yang-Mills connection $A(e)$ is given by $\alpha^{A(e)}=\bar a$. This gives a geometric interpretation of the factor $a_\lambda$ appearing in the expression of $e_\lambda$.

\subsection{Real Yang-Mills connections on a torus}\label{RYMT}

Let $\tau:\Lambda\to\Lambda$ be an automorphism of order 2 of the lattice $\Lambda$; denote by the same symbol the induced involutions $V\to V$ and $T\to T$. Recall (see Corollary \ref{RealLB} )
%
%
that the group $H^1_{\Z_2}(T,\underline{S}^1(1))$ of  isomorphism classes of Real Hermitian line bundles on $T$ fits into the short exact sequence
\begin{equation}\label{shes}
0\to\qmod{[\Lambda^\vee]^{\tau^*}}{(\id+\tau^*)\Lambda^\vee}\to H^1_{\Z_2}(T,\underline{S}^1(1))\textmap{c_1}  \wedge^2\Lambda^\vee(1)^{\Z_2}\textmap{{\scriptscriptstyle{\mathcal O}}} \qmod{[\Lambda^\vee]^{-\tau^*}}{(\id-\tau^*)\Lambda^\vee}\ . 
\end{equation}
The obstruction space $\qmod{[\Lambda^\vee]^{-\tau^*}}{(\id-\tau^*)\Lambda^\vee}$ coincides with the cohomology group 
$$H^1_{\Z_2}( \Lambda^\vee)=H^2_{\Z_2}(\Hom(\Lambda, S^1))=\qmod{\Hom(\Lambda,S^1)^\tau}{(\id\cdot \tau)\Hom(\Lambda,S^1) }\ .$$

We know that a Hermitian  line bundle  $L$ admits $\tau$-Real structures if and only if the fixed point locus of the induced involution $\tau^*:{\cal T}(L)\to {\cal T}(L)$ is non-empty. Using the identification between ${\cal T}(L)$ and the space $\Hom_u(\Lambda,S^1)$ of $u$-characters the involution $\tau^*$ becomes:
$$\tau^*(\alpha)_{\lambda}=\bar\alpha_{\tau(\lambda)}\ .
$$

The existence of a $\tau$-Real structure on $L$ is therefore equivalent to the existence of a $u$-character $\alpha$ satisfying the $\tau$-Reality condition
\begin{equation}\label{tauReal} \alpha_{\tau(\lambda)}=\bar\alpha_\lambda\ .
\end{equation}

Using this remark one can compute explicitly the obstruction  ${\scriptstyle{\mathcal O}}(u)$  of a form $u\in\wedge^2\Lambda^\vee(1)^{\Z_2}$ in the following way: Fix any $u$-character $\alpha$, and consider  the function $\rho_\alpha:\Lambda\to S^1$ given by $\lambda\mapsto \alpha(\lambda)\alpha(\tau(\lambda))$; since $\tau^*(u)=-u$, this function is a $\tau$-invariant character, and its class  modulo $(\id\cdot \tau)\Hom(\Lambda,S^1)$ is independent of $\alpha$. This class is the obstruction ${\scriptstyle{\mathcal O}}(u)$.

\begin{pr}  \label{exRealstr} On a torus the obstruction map ${\scriptstyle{\mathcal O}}$  vanishes.
\end{pr}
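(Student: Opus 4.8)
The plan is to reduce the vanishing of $\mathcal{O}$ to a purely $\F_2$-linear algebra statement about quadratic refinements, and then to resolve that statement using the $\tau$-fixed subspace. By the explicit description of the obstruction given just above the proposition, $\mathcal{O}(u)$ is the class of $\rho_\alpha(\lambda)=\alpha(\lambda)\alpha(\tau(\lambda))$ modulo $(\id\cdot\tau)\Hom(\Lambda,S^1)$, for any $u$-character $\alpha$. Writing a typical element of $(\id\cdot\tau)\Hom(\Lambda,S^1)$ as $\lambda\mapsto\beta(\lambda)\beta(\tau(\lambda))$ with $\beta\in\Hom(\Lambda,S^1)$ and setting $\gamma:=\alpha\beta^{-1}$ (again a $u$-character), one checks that $\mathcal{O}(u)=0$ if and only if there is a $u$-character $\gamma$ satisfying the $\tau$-Reality condition \eqref{tauReal}, i.e. $\gamma_{\tau(\lambda)}=\bar\gamma_\lambda$. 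So it suffices to produce one Real $u$-character for every $u\in\wedge^2\Lambda^\vee(1)^{\Z_2}$.

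Next I would build $u$-characters from integral bilinear forms. Choose a $\Z$-bilinear form $B:\Lambda\times\Lambda\to\Z$ whose antisymmetrization is $u$, i.e. $B(\lambda,\lambda')-B(\lambda',\lambda)=u(\lambda,\lambda')$ (for instance the upper-triangular one in a chosen basis). Since then $B(\lambda,\lambda')+B(\lambda',\lambda)\equiv u(\lambda,\lambda')\pmod 2$, the formula $\alpha(\lambda):=e^{\pi i B(\lambda,\lambda)}$ defines a $u$-character. Because $B$ is integer-valued, the Reality condition $\alpha(\tau(\lambda))=\bar\alpha_\lambda$ translates into $B(\tau(\lambda),\tau(\lambda))\equiv B(\lambda,\lambda)\pmod 2$ for all $\lambda$; equivalently, the $\F_2$-quadratic form $q(\bar\lambda):=B(\lambda,\lambda)\bmod 2$ on $\bar\Lambda:=\Lambda/2\Lambda$ must be invariant under the induced involution $\bar\tau$. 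Note that $q$ is a quadratic refinement of the reduction $\bar u$ of $u$, and that $\bar u$ is $\bar\tau$-invariant since $\tau^*u=-u\equiv u\pmod 2$.

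The heart of the matter is therefore the following: a $\bar\tau$-invariant alternating form $\bar u$ over $\F_2$ admits a $\bar\tau$-invariant quadratic refinement. I would argue this cohomologically. Fix any refinement $q_0$ (replacing $B$ by a symmetric form changes $q_0$ by an arbitrary linear form on $\bar\Lambda$, so every refinement is reached). A short computation using the $\bar\tau$-invariance of $\bar u$ shows that $\delta(\bar\lambda):=q_0(\bar\tau\bar\lambda)+q_0(\bar\lambda)$ is $\F_2$-linear, i.e. $\delta\in\bar\Lambda^\vee$, and that passing from $q_0$ to $q_0+m$ with $m\in\bar\Lambda^\vee$ replaces $\delta$ by $\delta+(\id+\bar\tau^*)m$. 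Thus a $\bar\tau$-invariant refinement exists precisely when $\delta\in\im(\id+\bar\tau^*)$.

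The key observation removing the obstruction is that $\delta$ vanishes on the fixed subspace $\bar\Lambda^{\bar\tau}$: for $\bar\tau\bar\lambda=\bar\lambda$ one has $\delta(\bar\lambda)=2q_0(\bar\lambda)=0$. Over any field $\im((\id+\bar\tau)^*)=\im(\id+\bar\tau^*)$ is exactly the annihilator of $\ker(\id+\bar\tau)=\bar\Lambda^{\bar\tau}$, so the vanishing of $\delta$ on $\bar\Lambda^{\bar\tau}$ forces $\delta\in\im(\id+\bar\tau^*)$. Hence a $\bar\tau$-invariant refinement $q$ exists, the associated $\alpha$ is a Real $u$-character, and $\mathcal{O}(u)=0$. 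The main obstacle is precisely this last step: a priori the obstruction lies in the Tate cohomology $\hat H^*(\Z_2,\bar\Lambda^\vee)$, which need not vanish in general; the point is that the \emph{specific} class $\delta$ arising from a quadratic refinement is always a norm, forced into $\im(\id+\bar\tau^*)$ by its vanishing on the $\bar\tau$-fixed vectors.
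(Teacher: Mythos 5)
Your proof is correct, but it takes a genuinely different route from the paper's. Both arguments begin by reducing the vanishing of ${\scriptstyle{\mathcal O}}(u)$ to the existence of a single $u$-character satisfying the $\tau$-Reality condition $\alpha_{\tau(\lambda)}=\bar\alpha_\lambda$; your torsor computation with $\gamma=\alpha\beta^{-1}$ is the same reduction the paper makes when it observes that a Real structure on $L$ exists iff a Real $u$-character does. From there the paper invokes the Comessatti normal form for $(\Lambda,\tau)$ --- a basis $(\alpha_i,\beta_j,\gamma_k)$ with $\tau(\alpha_i)=\alpha_i$, $\tau(\beta_j)=\alpha_j-\beta_j$, $\tau(\gamma_k)=-\gamma_k$ --- and constructs the Real $u$-character explicitly by prescribing its values on the basis vectors, the Reality condition reducing to scalar equations ($u_i\in\{\pm1\}$, $u_j=e^{\pi i u(\alpha_j,\beta_j)}$) that are always solvable. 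You instead work modulo $2$: a $\{\pm1\}$-valued $u$-character $e^{\pi i B(\lambda,\lambda)}$ corresponds to a quadratic refinement of $\bar u$ over $\F_2$, the Reality condition becomes $\bar\tau$-invariance of that refinement, and the obstruction $\delta=q_0\circ\bar\tau+q_0$ is killed by combining $\resto{\delta}{\bar\Lambda^{\bar\tau}}=0$ with the identity $\im(\id+\bar\tau^*)=\mathrm{Ann}\bigl(\ker(\id+\bar\tau)\bigr)$, valid over any field. All the individual steps check out: the linearity of $\delta$ uses $\bar\tau^*\bar u=\bar u$, every refinement of $\bar u$ is realized by an integral $B$ because a symmetric diagonal integral form produces an arbitrary linear change mod $2$, and $\ker(\id+\bar\tau)=\bar\Lambda^{\bar\tau}$ in characteristic $2$. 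The trade-off is this: the paper's normal-form computation also delivers, with no extra work, the explicit description of the whole space of Real $u$-characters as $\{\pm1\}^{a-s}\times(S^1)^{n-a}$, which the subsequent Remark uses to enumerate isomorphism classes of Real line bundles; your argument is independent of the Comessatti classification, produces a distinguished $\{\pm1\}$-valued Real character from any $\bar\tau$-invariant quadratic refinement, and isolates exactly why the a priori Tate-cohomological obstruction class is always a norm.
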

\pf Choose a Comessatti basis $(\alpha_1,\dots,\alpha_a,\beta_1,\dots\beta_s,\gamma_{s+1},\dots,\gamma_{n-a})$ of  $(\Lambda,\tau)$, i.e., a basis satisfying 
\begin{equation}\label{ComBasis}
\tau(\alpha_i)=\alpha_i \hbox{ for } 1\leq i\leq a\ ;\ \tau(\beta_j)=\alpha_j-\beta_j  \hbox{ for } 1\leq j\leq s\ ; $$
$$ \tau(\gamma_k)=-\gamma_k\   \hbox{ for } s+1\leq k\leq n-a\ .
\end{equation}
Here  $s\leq a$ is the Comessatti characteristic  of $(\Lambda,\tau)$, and $n$ is the rank of $\Lambda$ (see Lemma 3.5 in \cite{S}).
As in Remark \ref{extension} we see that for any system 
$$z=(u_1,\dots, u_a,v_1,\dots v_s,w_{s+1},\dots,w_{n-a})$$
 of elements in $S^1$  we get a $u$-character $\alpha_z$ such that $ \alpha_z(\alpha_i)=u_i$, $\alpha_z(\beta_j)=v_j$, $\alpha_z(\gamma_k)=w_k$.  Note that the $\tau$-Reality  condition (\ref{tauReal}) holds if and only if it holds for the elements  of the Comessatti basis (which is $\tau$-invariant). Therefore $\alpha_z$ is $\tau$-Real if and only if 
$$u_i\in\{\pm 1\} \hbox { for } 1\leq i\leq a\ \hbox{ and }\ u_j=e^{\pi i u(\alpha_j,\beta_j)} \hbox{ for } 1\leq j\leq s\ .
$$
\qed

\begin{re} The proof of Proposition \ref{exRealstr} shows  that, in the presence of a Comessatti basis,  the space of $\tau$-Real $u$-characters  can be explicitly  identified with the space $\{\pm 1\}^{a-s}\times [S^1]^{n-a}$. The connected components of this space correspond bijectively to isomorphism classes of $\tau$-Real line bundles.
\end{re}
\vspace{2mm}

Our goal   now is the following: Let $(L,\tilde\tau)$ be a $\tau$-Real Hermitian line bundle on $T$. The fixed point locus $L^{\tilde \tau}$ is a $\R$-line bundle on the fixed point locus $T^\tau$, and we want to compute its Stiefel-Whitney class $w_1(L^{\tilde \tau})$. The fixed point locus $T^\tau$ is a disjoint union of components $T_{[\mu]}$, all translations of the torus $V^\tau/\Lambda^\tau$ by elements $[\mu]\in \frac{1}{2}\Lambda^{-\tau}/\frac{1}{2} (\id-\tau)\Lambda$, so the Stiefel-Whitney class $w_1(L^{\tilde \tau})$ can be regarded as an function
$$w:\qmod{\frac{1}{2}\Lambda^{-\tau}}{ \frac{1}{2} (\id-\tau)\Lambda}\map \Hom(\Lambda^\tau,\{\pm 1\})\ .
$$

For a class $[\mu]$ the morphism $w([\mu])\in \Hom(\Lambda^\tau,\{\pm 1\})$ has a simple geometric interpretation: $w([\mu])(\lambda)$  is just the holonomy of any $\tilde\tau$-invariant Hermitian connection $A$ on $L$ along the path $p\circ c_{\mu,\lambda}$.  This follows from the following general 
\begin{re} \label{HolSW} Let $F\to B$ be an Euclidean line bundle on a differentiable manifold $B$, and $A$ its unique $\mathrm{O}(1)$-connection (which is automatically flat). Let $\gamma:S^1\to B$ be a smooth map, and $h=\gamma_*([S^1])\in H_1(B,\Z)$. Then
$$\langle w_1(F),[h]_2\rangle=h_\gamma ^A\in\{\pm 1\}\simeq\Z_2\ .
$$
\end{re}

Suppose now that $L=L(e)$  where $e$ is the factor of automorphy associated with a $\tau$-Real $u$-character $a$. In this case one can use the Yang-Mills connection $A(e)$,  which is  $\tau$-Real. Using Remark \ref{holo} we see that the holonomy along the closed path $p\circ c_{\mu,\lambda}$ is given by
$$h(\zeta)= e_\lambda(\mu)^{-1}e^{\pi i u(\mu,\lambda)} \zeta=\bar a_\lambda e^{-\pi  i u(\lambda,\mu)+\pi i u(\mu,\lambda)}\zeta=\alpha_\lambda e^{\pi i u(2\mu,\lambda)}\zeta\ .
$$
Therefore we get:
$$w([\mu])(\lambda)=\alpha_\lambda e^{\pi i u(2\mu,\lambda)}\ \ \forall \lambda\in\Lambda^\tau\ .
$$

Note that $w(0)(\lambda)=\alpha_\lambda$, so that one finally obtains the transformation  formula
$$w([\mu])(\lambda)=w(0)(\lambda) e^{\pi i u(2\mu,\lambda)}\ \ \forall \lambda\in\Lambda^\tau\ .
$$
Identifying $H_1(T_{[\mu]},\Z)$ with $\Lambda^\tau$ and $H^1(T_{[\mu]},\Z_2)$ with $\Hom(\Lambda^\tau,\Z_2)$ we get the following {\it difference formula}
\begin{pr}  \label{difference} One has
\begin{equation} w([\mu])-w(0)=u(2\mu,\cdot) \hbox { (mod 2) }  .
\end{equation}
\end{pr}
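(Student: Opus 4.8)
The plan is to read off the formula from the holonomy computation carried out immediately above the statement. Since every $\tau$-Real line bundle with first Chern class $u$ is isomorphic to one of the form $L(e)$ for a $\tau$-Real $u$-character $a$ (by the classification following Proposition \ref{exRealstr}), and since the first Stiefel-Whitney class of the fixed-point bundle is an isomorphism invariant, it suffices to establish the formula for such a model. For $L(e)$ one has the distinguished $\tau$-Real Yang-Mills connection $A(e)$, whose restriction to $L^{\tilde\tau}$ over each component $T_{[\mu]}$ is the unique flat $\mathrm{O}(1)$-connection; hence, by Remark \ref{HolSW}, the value $w([\mu])(\lambda)$ equals the holonomy of $A(e)$ along the loop $p\circ c_{\mu,\lambda}$.

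First I would record the two holonomy identities already obtained, namely
$$w([\mu])(\lambda)=\alpha_\lambda\, e^{\pi i u(2\mu,\lambda)},\qquad w(0)(\lambda)=\alpha_\lambda$$
for all $\lambda\in\Lambda^\tau$, where $\alpha=\bar a$. The only input beyond Remark \ref{holo} is the alternating identity $u(\mu,\lambda)-u(\lambda,\mu)=2u(\mu,\lambda)=u(2\mu,\lambda)$, which combines the holonomy of $A_u$ along the segment $[\mu,\mu+\lambda]$ with the automorphy factor $e_\lambda(\mu)$ into the clean exponential $e^{\pi i u(2\mu,\lambda)}$. Dividing the two identities yields the multiplicative transformation law
$$w([\mu])(\lambda)=w(0)(\lambda)\,e^{\pi i u(2\mu,\lambda)}.$$

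The final step is purely a change of notation. I would identify $\{\pm1\}$ with $\Z_2$ additively via $+1\mapsto 0$, $-1\mapsto 1$, so that multiplication becomes addition. Since $[\mu]\in\frac{1}{2}\Lambda^{-\tau}/\frac{1}{2}(\id-\tau)\Lambda$ gives $2\mu\in\Lambda$, the value $u(2\mu,\lambda)$ is a well-defined integer for $\lambda\in\Lambda^\tau$, and $e^{\pi i u(2\mu,\lambda)}=(-1)^{u(2\mu,\lambda)}$ corresponds to $u(2\mu,\lambda)\bmod 2$. Under the identifications $H_1(T_{[\mu]},\Z)\simeq\Lambda^\tau$ and $H^1(T_{[\mu]},\Z_2)\simeq\Hom(\Lambda^\tau,\Z_2)$ the multiplicative law then reads $w([\mu])-w(0)=u(2\mu,\cdot)\pmod 2$, as claimed.

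No serious obstacle remains once the holonomy of the canonical Yang-Mills connection has been computed; the genuine content lies entirely in the preceding identification of $w([\mu])(\lambda)$ with that holonomy, resting on Remark \ref{HolSW} together with the uniqueness and flatness of the $\mathrm{O}(1)$-connection on $L^{\tilde\tau}$. The single point deserving care is verifying $2\mu\in\Lambda$, so that $u(2\mu,\cdot)$ is integer-valued and the right-hand side genuinely lands in $\Hom(\Lambda^\tau,\Z_2)$; this is precisely why the translation class $[\mu]$ is taken in $\frac{1}{2}\Lambda^{-\tau}/\frac{1}{2}(\id-\tau)\Lambda$ rather than in $\Lambda$ itself.
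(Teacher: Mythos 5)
Your proposal is correct and follows essentially the same route as the paper: identify $w([\mu])(\lambda)$ with the holonomy of the $\tau$-Real Yang--Mills connection $A(e)$ along $p\circ c_{\mu,\lambda}$ via Remark \ref{HolSW}, compute that holonomy from Remark \ref{holo} and the factor of automorphy to get $w([\mu])(\lambda)=\alpha_\lambda e^{\pi i u(2\mu,\lambda)}$, and pass to additive $\Z_2$ notation. The only difference is that you make explicit the (harmless) reduction to the model $L(e)$ and the integrality of $u(2\mu,\cdot)$, which the paper leaves implicit.
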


This formula shows that the function $w$ is completely determined by $w(0)$. Note also that the difference  $w([\mu])-w(0)$ vanishes on the subgroup $(\id+\tau)\Lambda\subset \Lambda^\tau$ of  trivial invariants  in $\Lambda$. Indeed, one has
$$u(2\mu, \lambda+\tau(\lambda))=u(2\mu, \lambda)+u(2\mu,\tau(\lambda))=u(2\mu, \lambda)-u(2\tau(\mu),\lambda)=2u(2\mu,\lambda)\in 2\Z\ .
$$
It follows that the restriction $\resto{w([\mu])}{(\id+\tau)\Lambda}$ is independent of $[\mu]$. We want to identify this restriction.  \\

Note first that the map $\tilde f_u:\Lambda\to \Z_2$ defined by  $\tilde f_u(\lambda):=u(\lambda,\tau(\lambda))$ (mod 2) is a group morphism. This morphism vanishes on the subgroup $\Lambda^{-\tau}$ of anti-invariant elements, because for any $\nu\in \Lambda^{-\tau}$ one has
$$u(\lambda+\nu,\tau(\lambda+\nu))=u(\lambda,\tau(\lambda))+u(\lambda,\tau(\nu))+u(\nu,\tau(\lambda))+u(\nu,\tau(\nu))=$$ $$=u(\lambda,\tau(\lambda))+2u(\lambda,\tau(\nu))\ .
$$
Therefore the morphism $\tilde f_u$ induces a well-defined morphism $f_u:(\id+\tau)\Lambda\to \Z_2$ given by 
$$f_u(\lambda+\tau(\lambda)):= \tilde f_u(\lambda)=u(\lambda,\tau(\lambda))\ (mod\ 2).$$
  Note also that the morphism $\mathrm{Alt}^2(\Lambda,\Z)\to \Hom((\id+\tau)\Lambda, \Z_2)$ given by $u\mapsto f_u$ is obviously a group morphism. Using the identification $\{\pm 1\}=\Z_2$ we have
\begin{re} \label{restriction} For any $[\mu]\in  {\frac{1}{2}\Lambda^{-\tau}}/{ \frac{1}{2} (1-\tau)\Lambda}$ one has $\resto{w([\mu])}{(\id+\tau)\Lambda}=f_u$.
\end{re}
\pf  The holonomy along the closed path $p\circ c_{\mu,\lambda+\tau(\lambda)}$ is given by
$$h(\zeta)=\alpha_{\lambda+\tau(\lambda)} =e^{\pi i u(2\mu,\lambda+\tau(\lambda))}\zeta \ .
$$
 Since $ u(2\mu,\lambda+\tau(\lambda))=u(2\mu,\lambda)+u(2\mu,\tau(\lambda))=u(2\mu,\lambda)-u(2\tau(\mu),\lambda)=2 u(2\mu,\lambda)\in \Z$, one gets
$$w([\mu])(\lambda+\tau(\lambda))=\alpha_{\lambda}\bar\alpha_{\lambda} e^{\pi  i u(\lambda,\tau(\lambda))}=e^{\pi  i u(\lambda,\tau(\lambda))}\ .
$$

\qed

We can now summarize our results and give an explicit formula for the Stiefel-Whitney class $w_1(L^{\tilde\iota})$ of the fixed point bundle $L^{\tilde\iota}$ of a $\tau$-Real line bundle on a torus $T$. We may suppose that $L$ is endowed with an invariant Yang-Mills  connection $A=A(e)$, where $e$ is the factor of automorphy  associated with a $\tau$-Real $u$-character $a=(a_\lambda)_{\lambda\in\Lambda}$.

\begin{pr} \label{GiveSW} For   elements $\lambda\in\Lambda^\tau$ and $ [\mu]\in  {\frac{1}{2}\Lambda^{-\tau}} /{ \frac{1}{2} (\id-\tau)\Lambda}$ 
one has
$$w([\mu])(\lambda)=\bar a_\lambda e^{\pi i u(2\mu,\lambda)}\ .
$$
In particular, $w(0)(\lambda)=\bar a_\lambda$ for every $\lambda\in \Lambda^\tau$.
\end{pr}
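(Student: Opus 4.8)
The plan is to compute $w([\mu])(\lambda)$ directly as a holonomy, exploiting the identification of the Stiefel-Whitney class with the holonomy of the flat $\mathrm{O}(1)$-connection on the fixed-point real line bundle $L^{\tilde\tau}$. By Remark \ref{HolSW} the pairing $\langle w_1(L^{\tilde\tau}),h\rangle$ along a loop $\gamma$ representing a class $h\in H_1(T^\tau,\Z)$ equals the $\mathrm{O}(1)$-holonomy $h_\gamma$. Since every isomorphism class of $\tau$-Real bundle with the prescribed data is realized by $L=L(e)$ for the factor of automorphy $e_\lambda(v)=a_\lambda e^{\pi i u(\lambda,v)}$ attached to the $\tau$-Real $u$-character $a$, I would equip $L$ with the Yang-Mills connection $A(e)$, which is $\tilde\tau$-invariant. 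Because $A(e)$ is Real, its restriction to the real subbundle $L^{\tilde\tau}$ over $T^\tau$ is a metric connection on an $\R$-line bundle, hence \emph{is} the flat $\mathrm{O}(1)$-connection; thus the $\mathrm{U}(1)$-holonomy of $A(e)$ along a loop inside $T^\tau$ computes the wanted pairing. For the component $T_{[\mu]}$ the generator $\lambda\in\Lambda^\tau=H_1(T_{[\mu]},\Z)$ is represented by the loop $p\circ c_{\mu,\lambda}$, so the problem reduces to evaluating the holonomy of $A(e)$ along $p\circ c_{\mu,\lambda}$.

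The holonomy computation proceeds by lifting to the trivial bundle $V\times\C$ on which $A(e)$ is induced by $A_u$. First I would invoke Remark \ref{holo}: parallel transport of $A_u$ along the segment $c_{\mu,\lambda}$ (from $\mu$ to $\mu+\lambda$) multiplies the fibre coordinate by $e^{\pi i u(\mu,\lambda)}$. To close the lifted path into a loop in $T$ one must re-identify the fibre over $\mu+\lambda$ with the fibre over $\mu$ through the $\Lambda$-action $\tilde e_\lambda$, which glues $(\mu,z)$ to $(\mu+\lambda,e_\lambda(\mu)z)$; this contributes the factor $e_\lambda(\mu)^{-1}$. Hence the total holonomy is
$$
h(\zeta)=e_\lambda(\mu)^{-1}\,e^{\pi i u(\mu,\lambda)}\,\zeta .
$$
Substituting $e_\lambda(\mu)=a_\lambda e^{\pi i u(\lambda,\mu)}$ and using $a_\lambda\in S^1$ gives $e_\lambda(\mu)^{-1}=\bar a_\lambda e^{-\pi i u(\lambda,\mu)}$, and the alternating property $u(\lambda,\mu)=-u(\mu,\lambda)$ collapses the two exponentials to $e^{\pi i(u(\mu,\lambda)-u(\lambda,\mu))}=e^{\pi i u(2\mu,\lambda)}$. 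This yields $w([\mu])(\lambda)=\bar a_\lambda e^{\pi i u(2\mu,\lambda)}$, and $w(0)(\lambda)=\bar a_\lambda$ on setting $\mu=0$.

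Two points deserve care, and the first is where I expect the genuine work to lie. The delicate step is the justification that the ambient $\mathrm{U}(1)$-holonomy along a loop in $T^\tau$ legitimately computes $w_1(L^{\tilde\tau})$: one must check that $A(e)$ preserves the real subbundle $L^{\tilde\tau}$ along $T^\tau$, so that its parallel transport there is $\{\pm1\}$-valued, and only then may one appeal to Remark \ref{HolSW}. As a consistency test I would verify that the answer is indeed $\{\pm1\}$-valued: for $\lambda\in\Lambda^\tau$ the $\tau$-Reality condition (\ref{tauReal}) forces $a_\lambda=\bar a_\lambda$, hence $a_\lambda\in\{\pm1\}$, while $2\mu\in\Lambda^{-\tau}$ and $\lambda\in\Lambda^\tau$ give $u(2\mu,\lambda)\in\Z$, so that $e^{\pi i u(2\mu,\lambda)}=(-1)^{u(2\mu,\lambda)}$. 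The second, purely bookkeeping, point is that the gluing direction (whether one multiplies by $e_\lambda(\mu)$ or its inverse) and the orientation of the segment must be fixed consistently with the conventions of Remark \ref{holo}; this affects only signs, which the $\{\pm1\}$-valuedness renders harmless, but it is the place where an error could silently creep in.
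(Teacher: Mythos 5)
Your proposal is correct and follows essentially the same route as the paper: identify $w([\mu])(\lambda)$ with the holonomy of the $\tilde\tau$-invariant Yang-Mills connection $A(e)$ along $p\circ c_{\mu,\lambda}$ via Remark \ref{HolSW}, then compute that holonomy as the segment contribution $e^{\pi i u(\mu,\lambda)}$ from Remark \ref{holo} times the gluing factor $e_\lambda(\mu)^{-1}$, which simplifies to $\bar a_\lambda e^{\pi i u(2\mu,\lambda)}$. Your additional sanity check that the result is $\{\pm1\}$-valued (using $a_\lambda=\bar a_\lambda$ for $\lambda\in\Lambda^\tau$ and $u(2\mu,\lambda)\in\Z$) is a welcome point the paper leaves implicit.
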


\subsection{Classification of Real line bundles on a Real torus} \label{ClassRLBTorus}

Let again $\tau:\Lambda\to\Lambda$ be an automorphism of order 2 of an $n$-dimensional lattice $\Lambda\subset V=\langle \Lambda\rangle_\R$, and denote by the same symbol the induced involutions on $V$ and $T=V/\Lambda$.

Our next goal is a complete description --  in  terms of characteristic classes -- of the  Grothendieck group $H^1_{\Z_2}(T,\underline{S}^1(1))$ of  $\tau$-Real line bundles on $T$. 

For any $u\in\mathrm{Alt}^2(\Lambda,\Z)^{-\tau^*}$ we put
$$W(u):=\{ w\in \Hom(\Lambda^\tau,\Z_2)|\ \resto{w}{(\id+\tau)\Lambda}=f_u\}\ .
$$
Consider the fiber product 
$$\mathrm{Alt}^2(\Lambda,\Z)^{-\tau^*}\times_{\Hom((\id+\tau)\Lambda,\Z_2)} \Hom(\Lambda^\tau,\Z_2)\ ,$$
where $\mathrm{Alt}^2(\Lambda,\Z)^{-\tau^*}$, $\Hom(\Lambda^\tau,\Z_2)$ are regarded as   groups over  ${\Hom((\id+\tau)\Lambda,\Z_2)}$ via $u\mapsto f_u$, and  $w\mapsto \resto{w}{(\id+\tau)\Lambda}$ respectively.

By Remark \ref{restriction} it follows that, for every $\tau$-Real line bundle $(L,\tilde\tau)$, the Stiefel-Whitney class of the restriction  $\resto{L^{\tilde\tau}}{T_0}$ of the real line bundle $L^{\tilde\tau}$ to the standard connected component  $T_0:=V^\tau/\Lambda^\tau$ of $T^\tau$ is an element of $W(u)$.
\begin{thry} \label{ClassifRLBTorus} The group morphism 
$$cw_0:H^1_{\Z_2}(T,\underline{S}^1(1))\to \mathrm{Alt}^2(\Lambda,\Z)^{-\tau^*}\times_{\Hom((\id+\tau)\Lambda,\Z_2)} \Hom(\Lambda^\tau,\Z_2)$$ defined by
$$cw_0([L,\tilde\tau]):=(c_1(L),w_1(\resto{L^{\tilde\tau}}{T_0}))\ ,$$
 is a bijection.
\end{thry}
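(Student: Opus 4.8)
The plan is to realize $cw_0$ as the middle vertical arrow of a morphism of short exact sequences and to conclude by the five lemma, the only genuine computation being the identification of the induced map on the kernels.

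First I record the two relevant short exact sequences. On the source side, Corollary \ref{RealLB} together with the sequence \eqref{shes} and the vanishing of the obstruction map $\oo$ (Proposition \ref{exRealstr}) yields
$$0\to H^1_{\Z_2}(\Lambda^\vee(1))=\qmod{[\Lambda^\vee]^{\tau^*}}{(\id+\tau^*)\Lambda^\vee}\to H^1_{\Z_2}(T,\underline{S}^1(1))\textmap{c_1}\mathrm{Alt}^2(\Lambda,\Z)^{-\tau^*}\to 0\ .$$
On the target side, write $P$ for the fibre product and $\mathrm{pr}_1:P\to\mathrm{Alt}^2(\Lambda,\Z)^{-\tau^*}$ for the first projection; since $f_0=0$, the kernel of $\mathrm{pr}_1$ is exactly $\Hom(\Lambda^\tau/(\id+\tau)\Lambda,\Z_2)$, giving
$$0\to\Hom\left(\qmod{\Lambda^\tau}{(\id+\tau)\Lambda},\Z_2\right)\to P\textmap{\mathrm{pr}_1}\mathrm{Alt}^2(\Lambda,\Z)^{-\tau^*}\to 0\ .$$

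Next I assemble the ladder. The map $cw_0$ is a group morphism (the Chern class and the Stiefel--Whitney class are additive), and by Remark \ref{restriction} the class $w_1(\resto{L^{\tilde\tau}}{T_0})$ restricts to $f_{c_1(L)}$ on $(\id+\tau)\Lambda$, so $cw_0$ indeed takes values in $P$; by construction $\mathrm{pr}_1\circ cw_0=c_1$. Hence $\mathrm{pr}_1$ is surjective (because $c_1$ is), both rows are exact, and $cw_0$ carries $\ker(c_1)$ into $\ker(\mathrm{pr}_1)$, producing a commutative diagram
$$\begin{array}{ccccccccc}
0&\to&H^1_{\Z_2}(\Lambda^\vee(1))&\to&H^1_{\Z_2}(T,\underline{S}^1(1))&\textmap{c_1}&\mathrm{Alt}^2(\Lambda,\Z)^{-\tau^*}&\to&0\\
&&\downarrow\phi&&\downarrow cw_0&&\|&&\\
0&\to&\Hom\left(\qmod{\Lambda^\tau}{(\id+\tau)\Lambda},\Z_2\right)&\to&P&\textmap{\mathrm{pr}_1}&\mathrm{Alt}^2(\Lambda,\Z)^{-\tau^*}&\to&0
\end{array}$$
By the five lemma it suffices to prove that $\phi$ is an isomorphism.

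Finally I identify $\phi$ explicitly. An element of $\ker(c_1)$ is represented by a flat $\tau$-Real line bundle $L(e)$ with $u=0$, defined by a $\tau$-Real character $a$ (so $a_{\tau(\lambda)}=\bar a_\lambda$); by Proposition \ref{GiveSW} (with $u=0$) one has $w(0)(\lambda)=\bar a_\lambda$ for $\lambda\in\Lambda^\tau$, so $\phi$ sends the class of $a$ to the homomorphism $\lambda\mapsto a_\lambda\in\{\pm1\}$ on $\Lambda^\tau$. I make this concrete in a Comessatti basis \eqref{ComBasis}. There $\Lambda^\tau=\bigoplus_{i=1}^a\Z\alpha_i$ and $(\id+\tau)\Lambda=\langle\alpha_1,\dots,\alpha_s,2\alpha_{s+1},\dots,2\alpha_a\rangle$, so $\Lambda^\tau/(\id+\tau)\Lambda\simeq\Z_2^{a-s}$ with basis $\bar\alpha_{s+1},\dots,\bar\alpha_a$, and the target is $\Z_2^{a-s}$. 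The reality condition forces $a_{\alpha_j}=1$ for $j\le s$ and leaves exactly the discrete invariants $a_{\alpha_i}\in\{\pm1\}$ for $s<i\le a$, which describe the kernel group as $\Z_2^{a-s}$; under $\phi$ they are carried to the values $\bar\alpha_i\mapsto a_{\alpha_i}$. Thus $\phi$ is the identity of $\Z_2^{a-s}$ in these coordinates: surjectivity follows by prescribing $a_{\alpha_i}=h(\bar\alpha_i)$ and $a$ trivial on the remaining basis vectors, and injectivity follows since $\phi(a)=0$ forces $a_{\alpha_i}=1$ for all $i$, i.e. the trivial class. I expect the last step to be the main obstacle: everything else is formal diagram chasing, but making the abstract kernel $H^1_{\Z_2}(\Lambda^\vee(1))$ concrete enough to recognize that the passage $a\mapsto(\bar a_\lambda)_{\lambda\in\Lambda^\tau}$ is nondegenerate genuinely requires the Comessatti normal form, which is precisely what exhibits both kernels as $\Z_2^{a-s}$ and $\phi$ as the identity.
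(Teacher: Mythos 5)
Your proposal is correct and follows essentially the same route as the paper: the paper's injectivity and surjectivity arguments are exactly the diagram chase that your five-lemma packaging encapsulates (reduce to $c_1=0$, identify the flat Real bundles with $[\Lambda^\vee]^{\tau^*}/(\id+\tau^*)\Lambda^\vee$, and check via a Comessatti basis that the induced map to $\Hom(\Lambda^\tau,\Z_2)$ is injective with image $\Hom(\Lambda^\tau/(\id+\tau)\Lambda,\Z_2)$; then lift $u$ using Proposition \ref{exRealstr} and correct by a kernel element). The only cosmetic difference is that you compute the kernel map through the holonomy formula of Proposition \ref{GiveSW} rather than directly through the representation $e^{\pi i\chi}$, which amounts to the same identification.
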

\pf \vspace{2mm}\\
1. {\it  Injectivity:}  An element of $\ker(cw_0)$ is the class of a $\tau$-Real line bundle $(L,\tilde\tau)$ with trivial first Chern class $c_1(L)$ and vanishing Stiefel-Whitney class $w_1(\resto{L^{\tilde\tau}}{T_0})$. This first condition implies that $(L,\tilde\tau)$ is induced by an element 
$$[\chi]\in \qmod{[\Lambda^\vee]^{\tau^*}}{(\id+\tau^*)\Lambda^\vee}\ ,$$
i.e.,  it coincides with  the flat  $\tau$-Real line bundle $(L_\chi,\tilde\tau_\chi)$, where $L_\chi$ is defined by  the $\tau$-invariant representation $e^{\pi i  \chi}:\pi_1(T)\to\{\pm 1\}$ associated with    $\chi$; $L_\chi$ can be endowed with a natural $\tau$-Real structure $\tilde\tau_\chi$.

Note   that the natural morphism
$$\qmod{[\Lambda^\vee]^{\tau^*}}{(\id+\tau^*)\Lambda^\vee}\to \Hom(\Lambda^\tau,\Z_2)
$$
is a monomorphism, and its image is 
$$ \Hom\left(\qmod{\Lambda^\tau}{(\id+\tau)\Lambda},\Z_2\right)\subset  \Hom(\Lambda^\tau,\Z_2)\ .$$
This can easily be proved  using a Comessatti basis in $\Lambda$. Therefore  the vanishing of $w_1(\resto{L^{\tilde\tau}}{T_0})$ implies $[\chi]=0$.
\\ \\
2. {\it Surjectivity:} Let $(u,w)\in
 \mathrm{Alt}^2(\Lambda,\Z)^{-\tau^*}\times_{\Hom((\id+\tau)\Lambda,\Z_2)} \Hom(\Lambda^\tau,\Z_2)$. Using the vanishing of the obstruction map ${\scriptstyle{\mathcal O}}$ (see Proposition \ref{exRealstr}) it follows that there exists a $\tau$-Real line bundle $(L',\tilde\tau')$ with $c_1(L')=u$.  Put $w':= w_1(\resto{{ L'}^{\tau'}}{T^0})$. We know by Remark \ref{restriction}  that $w'\in W(u)$. Therefore the difference $w-w'$ vanishes on $(\id+\tau)\Lambda\subset \Lambda^\tau$, so it defines a morphism $v\in \Hom(\Lambda^\tau/(\id+\tau)\Lambda,\Z_2)$. Let $[\chi]$  be the corresponding element in $ {[\Lambda^\vee]^{\tau^*}}/{(\id+\tau^*)\Lambda^\vee}$. Then  $(L,\tilde\tau):=(L',\tilde\tau')\otimes (L_\chi,\tilde\tau_\chi)$ is  a $\tau$-Real line bundle with $cw_0([L,\tilde\tau])=(u,w)$.
\qed

\section{Real theta line bundles}

\subsection{Holomorphic line bundles on a complex torus}

We will see that, using the Kobayashi-Hitchin correspondence  between abelian Hermite-Einstein connections and holomorphic line bundles, one can recover the classical Appell-Humbert theorem in a completely natural way.

Suppose that $J$ is a complex structure on $V$. Endow the torus $T = V/\Lambda$ with the induced holomorphic structure, and let $L$ be a Hermitian line bundle on $T$ whose Chern class $c_1(L)$ corresponds to $u\in\mathrm{Alt}^2(\Lambda,\Z)$. A connection $A\in{\cal A}(L)$ is Hermite-Einstein with respect to the flat Kähler metric on $T$ (defined by any Hermitian structure on $V$) if and only if it is Yang-Mills and its curvature is of type (1,1). Therefore the group $\Pic(T)$ of isomorphism classes of holomorphic line bundles on $T$ can be identified with the union $\coprod_{c_1(L)\in \mathrm{NS}(T)}{\cal T}(L)$, where $\mathrm{NS}(T)\subset H^2(T,\Z)$ is the Neron-Severi group of $T$. $\mathrm{NS}(T)$ can be identified with the subgroup  of $\mathrm{Alt}^2(\Lambda,\Z)$ consisting of forms $u$ whose $\R$-linear extension is $J$-invariant. This means that the corresponding differential form on $V$ is of type (1,1). Our goal is to find a natural holomorphic factor of automorphy $\epsilon$ for the holomorphic line  bundle ${\cal L}(e)$ which corresponds to the Hermite-Einstein connection $A(e)$ on $L(e)$. The holomorphic structure of ${\cal L}(e)$ is defined by the semi-connection $\bar\partial_{A(e)}$. The pull-back of this semi-connection to the trivial line bundle $V\times\C$  is given by $\bar\partial_u:=\bar\partial +\theta_u^{0,1}$, so it does not coincide with the trivial semi-connection $\bar\partial$ (unless of course $u=0$).  We want to construct explicitly a holomorphic line bundle ${\cal L}'(e)$  on $T$ which is holomorphically isomorphic to ${\cal L}(e)$ and whose pull-back to $V$ is the standard trivial holomorphic line bundle $(V\times\C,\bar\partial)$. This line bundle will be defined by {\it a holomorphic factor of automorphy} $\epsilon=(\epsilon_\lambda)_{\lambda\in\Lambda}$.

In order to obtain  this holomorphic factor of automorphy,  the first step is to find a complex gauge transformation $g\in {\cal C}^\infty(V,\C^*)$ such that $g^*(\bar\partial)=\bar\partial_u$, i.e., we have to solve the equation:
$$g^{-1} dg=\theta_u^{0,1}\ .
$$
If $g$ is a solution of the this equation, the corresponding factor of automorphy will be
$$e'_\lambda(v):=g(v+\lambda) e_\lambda(v) g^{-1}(v)\ .
$$

Using complex coordinates $z^j$ on $V$ and writing $u$ as
$$u=\frac{i}{2} \sum_{j,k}\omega_{jk} dz^j\wedge d\bar z^k
$$
with $\bar \omega_{ij}=\omega_{ji}$, one obtains
$$\theta_u^{0,1}=\frac{\pi}{2}\sum_{j,k} \omega_{jk} z^j d\bar z^k\ .
$$
Note that $\theta_u^{0,1}=\frac{\pi}{2} \bar\partial(\sum_{j,k}\omega_{jk} z^j\bar z^k)$, so one can take
$$g(v)=e^{\frac{\pi}{2} \sum_{j,k}\omega_{jk} v^j\bar v^k}\ .
$$
Recall that the Hermitian form associated with $u$ ($\C$-linear in the second variable) is given by
$$H_u(v,w):=u(v,Jw)+iu(v,w)\ .
$$
One checks that  $H(v,w)=\sum_{j,k} \omega_{jk} w^j \bar v^k$. Therefore the most natural solution is $g(v)=e^{\frac{\pi}{2} H_u(v,v)}$. With this choice of the complex  gauge transformation $g$, the corresponding holomorphic factor of automorphy of ${\cal L}'(e)\simeq {\cal L}(e)$ is given by:
$$\epsilon_\lambda(v)=g(v+\lambda) g^{-1}(v) e_\lambda(v)= a_\lambda e^{\pi(H(\lambda,v)+\frac{1}{2} H(\lambda,\lambda))}\ .
$$
This is the {\it canonical factor of automorphy}  in the sense of Mumford's [Mu].  Note that $g$ defines an isomorphism  of  Hermitian holomorphic line bundles on $V$
$$(V\times\C,\bar\partial_{A(e)},h_0)\map (V\times\C,\bar\partial, |g|^{-2} h_0)\ ,
$$
where $h_0$ is the standard Hermitian metric on the trivial line bundle $V\times\C$.  This isomorphism maps the Chern connection on the left (which is $A(e)$)  to the Chern connection of the pair $(\bar\partial, |g|^{-2} h_0)$, which descends to the Chern connection of ${\cal L'}(e)$ endowed with the metric induced by $|g|^{-2} h_0$ (which is the unique Hermite-Einstein connection of the holomorphic line bundle ${\cal L'}(e)$). This proves the following important theorem,  which yields the factor of automorphy $e$ of a Yang-Mills connection which is gauge equivalent to the Hermitian-Einstein connection of a holomorphic line bundle ${\cal L}(\epsilon)$ defined by a canonical factor of automorphy $\epsilon$.  
\begin{thry} \label{AFHE} Let  $\epsilon=(\epsilon_\lambda)_{\lambda\in\Lambda}$ with
$$\epsilon_\lambda(v)=a_\lambda e^{\pi(H(\lambda,v)+\frac{1}{2} H(\lambda,\lambda))}
$$
 be a canonical factor of automorphy for a holomorphic line bundle ${\cal L}(\epsilon)$ on $T$, where $a:\Lambda\to S^1$ is an $S^1$-valued $\mathrm{Im}(H)$-character. Then the Yang-Mills connection $A(e)$ defined by the factor of automorphy $e_\lambda(v)=a_\lambda  e^{\pi i \mathrm{Im}(H)(\lambda,v)}$ is gauge-equivalent to the unique Hermite-Einstein connection on the holomorphic bundle ${\cal L}(\epsilon)$.

In particular   $\bar a_\lambda\in S^1$ is  the holonomy of this Hermite-Einstein connection   along the loop  $p\circ c_{\lambda}$.
\end{thry}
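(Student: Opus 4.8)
The plan is to pull everything back to the universal cover $V$ and the trivial bundle $V\times\C$, where both the Yang-Mills connection $A(e)$ and the holomorphic structure of ${\cal L}(\epsilon)$ become completely explicit. First I would lift $A(e)$ to the connection $A_u$ with covariant derivative $\nabla_u=d+\theta_u$, $\theta_u=-\pi i\,\iota_{\mathrm v}u$; its curvature is $-2\pi i u$, so $A_u$ is Yang-Mills and, since $u$ is of type $(1,1)$, the induced holomorphic structure on the quotient line bundle is well defined. That structure is given by the semi-connection $\bar\partial_u=\bar\partial+\theta_u^{0,1}$, which differs from the standard $\bar\partial$ precisely by the explicit $(0,1)$-form $\theta_u^{0,1}$.

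The heart of the argument is to exhibit a single complex gauge transformation $g\in{\cal C}^\infty(V,\C^*)$ that simultaneously trivializes this holomorphic structure, i.e.\ solves $g^*(\bar\partial)=\bar\partial_u$ (equivalently $g^{-1}\bar\partial g=\theta_u^{0,1}$), and conjugates the factor of automorphy $e$ into the canonical factor $\epsilon$. Writing $u$ in complex coordinates shows that $\theta_u^{0,1}$ is $\bar\partial$-exact with natural primitive $g(v)=e^{\frac{\pi}{2}H(v,v)}$, where $H=H_u$ is the Hermitian form attached to $u$. With this $g$ the transformed factor of automorphy is $\epsilon_\lambda(v)=g(v+\lambda)g(v)^{-1}e_\lambda(v)$, and expanding $H(v+\lambda,v+\lambda)$ one sees the quadratic-in-$v$ terms cancel while the remaining terms reassemble into $a_\lambda e^{\pi(H(\lambda,v)+\frac{1}{2}H(\lambda,\lambda))}$; thus the line bundle built from $\epsilon$ is holomorphically isomorphic to ${\cal L}(e)$ and is, by definition, ${\cal L}(\epsilon)$.

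It then remains to identify the connections. The map $g$ is an isomorphism of Hermitian holomorphic bundles $(V\times\C,\bar\partial_{A(e)},h_0)\to(V\times\C,\bar\partial,|g|^{-2}h_0)$ over $V$, and one checks that $A(e)$ is exactly the Chern connection of the source (its connection form $\theta_u$ is imaginary, with $(0,1)$-part $\theta_u^{0,1}$), so $g$ carries it to the Chern connection of the target. The one genuinely substantive verification --- the step I expect to be the main obstacle --- is that the metric $|g|^{-2}h_0$ is compatible with $\epsilon$ and so descends to ${\cal L}(\epsilon)$: since $H(v,v)$ is real one computes $|g(v+\lambda)|^2/|g(v)|^2=e^{\pi(2\,\mathrm{Re}\,H(\lambda,v)+H(\lambda,\lambda))}=|\epsilon_\lambda(v)|^2$, which is precisely the required invariance. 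The descended Chern connection is Yang-Mills with curvature of type $(1,1)$ on the flat K\"ahler torus, hence is the unique Hermite-Einstein connection of ${\cal L}(\epsilon)$, giving the first assertion. For the final holonomy statement I would use that gauge-equivalent connections have equal holonomy, together with the earlier computation $\alpha^{A(e)}=\bar a$: by Remark~\ref{holo} the holonomy of $A_u$ along $[0,\lambda]\subset V$ is trivial, so the entire contribution to the holonomy of $A(e)$ around $p\circ c_\lambda$ comes from the gluing $e_\lambda(0)=a_\lambda$, yielding $\bar a_\lambda$.
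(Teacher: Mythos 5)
Your proposal is correct and follows essentially the same route as the paper: the same gauge transformation $g(v)=e^{\frac{\pi}{2}H_u(v,v)}$ trivializing $\bar\partial_u$, the same transformation of the factor of automorphy into the canonical one, and the same identification of $A(e)$ with the Chern connection transported by $g$ and descended to ${\cal L}(\epsilon)$. The only additions are explicit verifications (the descent of the metric $|g|^{-2}h_0$, and the holonomy computation via $\alpha^{A(e)}=\bar a$) that the paper treats as already established or leaves implicit.
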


Note that this theorem allows one to read off the holonomy of the Hermitian-Einstein connection on the holomorphic bundle ${\cal L}(\epsilon)$ along segments of the form $p\circ c_\lambda$.
%
%

\subsection{Theta line bundles of   Klein surfaces}\label{ThetaLB}

Let $(C,\iota)$ be a Klein surface of genus $g$, and $\Theta\subset \Pic^{g-1}(C)$ the geometric theta divisor defined by
$$\Theta:=\{{\cal L}\in\Pic^{g-1}(C)|\ h^0({\cal L})>0\}\ .
$$
Denote by
$$S(C):=\{[\kappa]\in\Pic^{g-1}(C)|\ [\kappa^{\otimes 2}]=[\omega_C]\}$$
the  set of theta characteristics of $C$. This set is naturally a $\Pic^0(C)_2$-torsor.
 For any $[\kappa]\in S(C)$  we consider the divisor
 $$\Theta_\kappa:=\Theta-[\kappa]\subset \Pic^0(C)
 $$
 which will be called the theta divisor associated with $\kappa$. Note that
 $$(-1)^*\Theta_\kappa=\Theta_\kappa\ .
 $$
 Denote by $\hat\iota$ the involution $\hat\iota:\Pic(C)\to\Pic(C)$ given by 
 $$\hat\iota([{\cal L}]) =[\overline{\iota^*({\cal L})}] \ .
 $$
Note that $\hat\iota(\Pic^d(C))=\Pic^d(C)$ for any $d\in\Z$   and  that $\hat\iota$ leaves invariant the geometric theta divisor $\Theta$  (because   $H^0(C,{\cal L})$ and $H^0(C,\hat\iota({\cal L}))$ are naturally anti-isomorphic).

Clearly if a theta characteristic $[\kappa]\in S(C)$ is $\iota$-Real (i.e. $\hat\iota([\kappa])=[\kappa])$ then one has 
$$\hat\iota(\Theta_\kappa)=\Theta_\kappa\ .$$
The set of $\hat\iota$-Real theta characteristics is non-empty; this set has been studied in \cite{GH} and \cite{N}.
We obtain a holomorphic line bundle
$${\cal L}_\kappa:={\cal O}_{\Pic^0(C)} (\Theta_\kappa)
$$
on $\Pic^0(C)$, which is symmetric in the sense that
$$(-1)^* {\cal L}_\kappa={\cal L}_\kappa\ .
$$
Note also that ${\cal L}_\kappa$ is naturally a $\hat\iota$-Real line bundle on $\Pic^0(C)$ since it is associated with a Real divisor. The first Chern class of ${\cal L}_\kappa$ is the element 
$$u_C\in H^2(\Pic^0(C),\Z)=\mathrm{Alt}^2(H^1(C,\Z),\Z)$$
  defined by the cup form $u_C:H^1(C,\Z)\times H^1(C,\Z)\to\Z$.

Our next goals are:
\begin{enumerate}

\item determine explicitly the Appel-Humbert data of ${\cal L}_\kappa$,
\item determine explicitly the element of $H^1_{\Z_2}(\Pic^0(C),\underline{S}^1(1))$ defined by the $\hat\iota$-Real line bundle ${\cal L}_\kappa$ on 
$\Pic^0(C)$
\end{enumerate}

 Clearly the first component of the Appel-Humbert datum defining ${\cal L}_\kappa$ is the Hermitian form $H_{u_C}$ associated with the cup form $u_C$.
The second component  $\chi_\kappa$  of the datum is an $u_C$-character which  takes values in $\{\pm 1\}$ since ${\cal L}_\kappa$ is symmetric (see Corollary 2.3.7 in \cite{BL}).
We obtain the identities
$$\chi_\kappa(\lambda+\lambda')=\chi_\kappa(\lambda)\chi_\kappa(\lambda') e^{\pi i u_C(\lambda,\lambda')}\ .
$$
Note that $\chi_\kappa$ is trivial  on $2 H^1(C,\Z)\subset H^1(C,\Z)$ and descends to a well-defined map
$$\bar\chi_\kappa:\qmod{H^1(C,\Z)}{2 H^1(C,\Z)}=H^1(C,\Z_2)\map \{\pm 1\}=\Z_2\ .
$$
This $\Z_2$-valued map satisfies the identity
$$\bar\chi_\kappa([\lambda]+[\lambda'])=\bar\chi_\kappa([\lambda])+\bar\chi_\kappa([\lambda'])+\overline{u_C(\lambda,\lambda')}\ .
$$
Hence $\overline{\chi}_\kappa$ is a   quadratic refinement of the (mod 2) cup form 
$$\bar u_C: H^1(C,\Z_2)\times H^1(C,\Z_2)\to\Z_2\ .$$

Let  $[\kappa]\in S(C)$. Mumford defines a map $q_\kappa:\Pic^0(C)_2\to\Z_2$ given by
$$[\eta]\mapsto h^0(\kappa\otimes\eta)-h^0(\kappa)\hbox{ (mod 2)}\ .
$$
With the canonical identification $\Pic^0(C)_2=H^1(C,\Z_2)$ and using Poincaré duality Mumford's theta form $q_\kappa$ becomes a map (denoted by the same symbol)  
$$q_\kappa:H_1(C,\Z_2)\to\Z_2$$
  which satisfies {\it the  Riemann-Mumford relations:}
\begin{equation}\label{RM} q_\kappa(\eta+\eta')=q_\kappa(\eta)+q_\kappa(\eta')+
\eta\cdot\eta'
\end{equation}
\begin{pr} \label{AlphaKappa} Suppose  ${\cal L}_\kappa$ is associated with the Appel-Humbert data $(H_{u_C},\chi_\kappa)$. Then
$$\chi_\kappa(\lambda)=(-1)^{q_\kappa(\overline{\lambda\cap[C]})}\ \ \forall \lambda\in H^1(C,\Z)\ .
$$
\end{pr}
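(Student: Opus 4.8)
The plan is to read both sides of the formula off the canonical symmetric structure of ${\cal L}_\kappa$, evaluated on the fibres over the $2$-torsion points of $\Pic^0(C)$. Since the divisor $\Theta_\kappa$ is symmetric, the bundle ${\cal L}_\kappa={\cal O}_{\Pic^0(C)}(\Theta_\kappa)$ carries a canonical isomorphism $\sigma:(-1)^*{\cal L}_\kappa\to{\cal L}_\kappa$, normalised to be the identity on the fibre over $0$. For a $2$-torsion point $\eta$, which is fixed by $-1$, the isomorphism $\sigma$ induces an endomorphism of the fibre $({\cal L}_\kappa)_\eta$; since $\sigma^2=\id$ this endomorphism is a scalar $e_*(\eta)\in\{\pm1\}$. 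Writing $\eta=[\tilde x]$ with $\tilde x\in\frac{1}{2}\Lambda$, $\Lambda=H^1(C,\Z)$, the theory of symmetric line bundles on complex tori (\cite{BL}, \S4.6) expresses this eigenvalue through the Appell--Humbert datum as $e_*(\eta)=\chi_\kappa(2\tilde x)$. (That the right-hand side depends only on $\eta$ follows from $\chi_\kappa$ being a $\{\pm1\}$-valued $u_C$-character and from $u_C$ being integral on $\Lambda$.) Taking $\eta$ to be the image of $\lambda\in H^1(C,\Z)$, i.e. $\tilde x=\frac{1}{2}\lambda$, this reads $e_*(\eta)=\chi_\kappa(\lambda)$, which identifies the left-hand side of the asserted formula.

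The core of the argument is to compute the same eigenvalue geometrically from the symmetric divisor. The key lemma I would establish is that, for a symmetric effective divisor $D$ on a complex torus $X$ with $L={\cal O}_X(D)$ and its canonical symmetric structure, and for any $2$-torsion point $\eta$, one has
$$e_*(\eta)=(-1)^{\mathrm{mult}_\eta(D)-\mathrm{mult}_0(D)}\ .$$
The proof is a local computation at the fixed point $\eta$: the involution $-1$ acts on the tangent space $T_\eta X$ as $-\id$, so the pull-back $(-1)^*f$ of a local equation $f$ of $D$ at $\eta$ multiplies the leading homogeneous term, of degree $\mathrm{mult}_\eta(D)$, by $(-1)^{\mathrm{mult}_\eta(D)}$; comparing, through $\sigma$, the induced action on $({\cal L}_\kappa)_\eta$ with the chosen normalisation on $({\cal L}_\kappa)_0$ produces exactly the difference of multiplicities in the exponent. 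Applying the lemma with $D=\Theta_\kappa$ gives $e_*(\eta)=(-1)^{\mathrm{mult}_\eta(\Theta_\kappa)-\mathrm{mult}_0(\Theta_\kappa)}$.

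It then remains to translate multiplicities into dimensions of cohomology. By the Riemann singularity theorem the multiplicity of the geometric theta divisor $\Theta\subset\Pic^{g-1}(C)$ at a point $[{\cal M}]$ equals $h^0({\cal M})$; translating by $\kappa$ gives $\mathrm{mult}_\eta(\Theta_\kappa)=h^0(\kappa\otimes\eta)$ and $\mathrm{mult}_0(\Theta_\kappa)=h^0(\kappa)$. Combining the three steps,
$$\chi_\kappa(\lambda)=e_*(\eta)=(-1)^{h^0(\kappa\otimes\eta)-h^0(\kappa)}=(-1)^{q_\kappa(\eta)}$$
by the very definition of Mumford's form $q_\kappa$. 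Finally, under the canonical identifications $\Pic^0(C)_2\cong H^1(C,\Z)/2H^1(C,\Z)\cong H^1(C,\Z_2)$ the $2$-torsion point $\eta=[\frac{1}{2}\lambda]$ corresponds to $\lambda\bmod 2$, and Poincar\'e duality $\lambda\mapsto\lambda\cap[C]$ carries this class to $\overline{\lambda\cap[C]}\in H_1(C,\Z_2)$, which is precisely the argument of $q_\kappa$ appearing in the statement; this yields $\chi_\kappa(\lambda)=(-1)^{q_\kappa(\overline{\lambda\cap[C]})}$.

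I expect the main obstacle to be the geometric lemma of the second paragraph: relating the fibrewise eigenvalue of the normalised symmetric structure to the parity of the local multiplicity of a symmetric divisor, while carefully tracking the normalisation at $0$ and the compatibility of the local trivialisations. This is also what forces the appearance of $\mathrm{mult}_\eta-\mathrm{mult}_0$ rather than $\mathrm{mult}_\eta$ alone. As a consistency check, note that both $\bar\chi_\kappa$ and $q_\kappa\circ(\,\cdot\cap[C])$ are quadratic refinements of the mod-$2$ cup form $\bar u_C$ on $H^1(C,\Z_2)$ --- the former by the identity for $\bar\chi_\kappa$ recorded just before the proposition, the latter by the Riemann--Mumford relation (\ref{RM}) transported through Poincar\'e duality --- so they can differ at most by a linear functional; the computation above is exactly what shows that functional to vanish.
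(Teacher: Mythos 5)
Your proof is correct and follows essentially the same route as the paper: the paper simply quotes \cite{BL} Proposition 4.7.2 for the identity $\chi_\kappa(\lambda)=(-1)^{\mathrm{mult}_{[\frac{1}{2}\lambda]}(\Theta_\kappa)-\mathrm{mult}_{[0]}(\Theta_\kappa)}$ --- which is exactly the combination of your two ingredients, namely $e_*(\eta)=\chi_\kappa(2\tilde x)$ from the theory of normalised symmetric structures and the multiplicity formula for a symmetric divisor --- and then concludes via the Riemann singularity theorem, the definition of $q_\kappa$ and the Poincar\'e-duality identification, just as you do. The only difference is that you unpack and sketch a proof of the cited \cite{BL} result instead of using it as a black box.
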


\begin{proof} Using \cite{BL} Proposition 4.7.2. one obtains
$$\chi_\kappa(\lambda)=(-1)^{\mathrm{mult}_{[\frac{1}{2}\lambda]}(\Theta_\kappa)-
\mathrm{mult}_{[0]}(\Theta_\kappa)}\ \ \forall\lambda\in H^1(C,\Z)\ .
$$
Now we use the Riemann singularity theorem \cite{BL}: 
$$\mathrm{mult}_{[{\cal L}]}(\Theta)=h^0({\cal L})\ .
$$
One obviously has 
$$\mathrm{mult}_{[0]}(\Theta_\kappa))=\mathrm{mult}_{[\kappa]}(\Theta))=h^0(\kappa)\ ,
$$
$$\mathrm{mult}_{[\frac{1}{2}\lambda]}(\Theta_\kappa)=\mathrm{mult}_{\kappa\otimes\left[\frac{1}{2}\lambda\right]}(\Theta))=h^0\left(\kappa\otimes\left[\frac{1}{2}\lambda)\right]\right)\ .
$$
Therefore
$$\chi_\kappa(\lambda)=(-1)^{q_\kappa\left(\left[\frac{1}{2}\lambda\right]\right)} \ .
$$
Since the image of $\left[\frac{1}{2}\lambda\right]\in \Pic^0(C)_2$ in $H_1(C,\Z_2)$ via our identification is $\overline{\lambda\cap[C]})$ we get
 $$\chi_\kappa(\lambda)=(-1)^{q_\kappa(\overline{\lambda\cap[C]})}\  .
$$

\end{proof}

 Composing the map $\Fg_X$ defined in Proposition \ref{FC} with the morphism %
$$H^1_{\Z_2}(X,\underline{S}^1(1))\to H^1(X^\iota,\Z_2)$$
 which maps a $\iota$-Real line bundle $(L,\tilde\iota)$ to the first Stiefel-Whitney class  $w_1(L^{\tilde\iota})$ one obtains a morphism
 $$w:\Pic(X)^{\hat\iota}\to H^1(X^\iota,\Z_2)\ .
 $$
 %
 %
%
 
The following result --  in the case of {\it effective} Real theta characteristics -- was first obtained by Natanzon using the theory of real Fuchsian groups and their liftings \cite{N}.  We give a short direct proof in the general case, combining elementary differential geometric arguments with classical results of Atiyah, Johnson and Libgober.
 
\begin{thry} \label{main} Let $\kappa$ be a Real theta characteristic on $(C,\iota)$, and let $C_1,\dots,C_r$ be the connected components of the fixed point locus $C^\iota$. One has
$$q_\kappa([C_i]_2)=\langle w([\kappa]),[C_i]_2\rangle+1\ .
$$
\end{thry}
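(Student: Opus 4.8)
The plan is to show that the mod-two quantity $D_i:=q_\kappa([C_i]_2)+\langle w(\kappa),[C_i]\rangle\in\Z_2$ is independent of the chosen Real theta characteristic $\kappa$, and then to evaluate the resulting universal constant. First I would translate each term into a manipulable form. For the right-hand term, Proposition \ref{FC} produces the anti-holomorphic Real structure $\tilde\iota_\kappa$ together with an invariant Hermite--Einstein (Chern) connection $A_\kappa$ on $\kappa$; by Remark \ref{HolSW} the number $\langle w(\kappa),[C_i]\rangle=\langle w_1(\kappa^{\tilde\iota_\kappa}),[C_i]\rangle$ is exactly the $\{\pm1\}$-holonomy of $A_\kappa$ along the circle $C_i$. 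Choosing an $\iota$-invariant metric makes $C_i$ a closed geodesic, so this holonomy squares to the trivial holonomy of $\omega_C=\kappa^{\otimes2}$ along $C_i$; this confirms the value lies in $\{\pm1\}$ but shows that $\kappa^{\otimes2}$ alone cannot determine the sign, which is why the theta data must enter. For the left-hand term I would use Mumford's definition $q_\kappa(\eta)=h^0(\kappa\otimes\eta)+h^0(\kappa)\ (\mathrm{mod}\ 2)$ with $\eta_i$ the $2$-torsion bundle corresponding to $[C_i]_2$ under $\Pic^0(C)_2\simeq H_1(C,\Z_2)$, noting that $\kappa\otimes\eta_i$ is again a Real theta characteristic; alternatively Proposition \ref{AlphaKappa} rewrites this term as the value $\chi_\kappa(\lambda_i)=(-1)^{q_\kappa([C_i]_2)}$ of the Appell--Humbert character on $\lambda_i:=\mathrm{PD}[C_i]$.

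The key structural step is the reduction. Any two Real theta characteristics differ by a Real $2$-torsion bundle $\xi\in\Pic^0(C)_2^{\hat\iota}$, so it suffices to compare $D_i(\kappa)$ and $D_i(\kappa\otimes\xi)$. Since $w$ is a group morphism, $w(\kappa\otimes\xi)=w(\kappa)+w(\xi)$, and (Remark \ref{HolSW} applied to the flat bundle $\xi$) $\langle w(\xi),[C_i]\rangle$ is the monodromy of $\xi$ along $C_i$, i.e. the mod-two intersection pairing $\langle\xi,[C_i]_2\rangle$. On the other side the Riemann--Mumford relation (\ref{RM}) gives $q_{\kappa\otimes\xi}([C_i]_2)=q_\kappa([C_i]_2)+\xi\cdot[C_i]_2$, with the same intersection pairing. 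The two increments coincide, hence $D_i(\kappa\otimes\xi)=D_i(\kappa)$ and $D_i$ is a well-defined invariant of $(C,\iota)$ and the component $C_i$ alone.

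It then remains to prove $D_i=1$. Both $q_\kappa([C_i]_2)$ (a mod-two index, i.e. the parity of a theta characteristic) and $\langle w(\kappa),[C_i]\rangle$ (the isomorphism type of the real spinor bundle on $C_i$) are invariant under continuous deformation of the Real curve, so $D_i$ is locally constant on the moduli of Real curves and, after a pinching degeneration that isolates an annular neighbourhood of $C_i$, depends only on the germ of $(C,\iota)$ along $C_i$. I would pin down the constant in the two model situations: when $C_i$ separates, $[C_i]_2=0$ forces $q_\kappa([C_i]_2)=0$, while the induced (bounding) spin structure makes $\kappa^{\tilde\iota_\kappa}|_{C_i}$ the M\"obius bundle, so $\langle w(\kappa),[C_i]\rangle=1$; the non-separating case is handled by identifying $q_\kappa$ with the spin (Johnson) quadratic form and invoking its value $1+(\text{spin framing})$ on an embedded circle, the framing being precisely the orientability class $\langle w(\kappa),[C_i]\rangle$.

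The main obstacle is this last base computation together with the precise origin of the $+1$: one must match Mumford's theta-characteristic quadratic form with the geometric spin structure that $\kappa$ induces on the embedded real circle $C_i$, and compute the orientability of the real spinor bundle $\kappa^{\tilde\iota_\kappa}|_{C_i}$ in a way consistent with the conventions fixed by Proposition \ref{FC} and Remark \ref{HolSW}. Everything else is formal once the reduction of the second paragraph is in place.
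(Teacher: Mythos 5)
Your reduction step is correct and is an observation the paper does not need: writing any two Real theta characteristics as $\kappa'=\kappa\otimes\xi$ with $\xi\in\Pic^0(C)_2^{\hat\iota}$, the increment $\xi\cdot[C_i]_2$ of $q_\kappa([C_i]_2)$ (from $q_{\kappa\otimes\xi}(\eta)=q_\kappa(\xi+\eta)-q_\kappa(\xi)$ and the Riemann--Mumford relation) does match the increment $\langle w(\xi),[C_i]\rangle$ of the holonomy term under Poincar\'e duality, so your $D_i$ is indeed independent of $\kappa$. The genuine gap sits exactly where you place "the main obstacle": the identification of $\langle w(\kappa),[C_i]\rangle$ with the spin framing, i.e.\ with the evaluation $\langle\xi_\kappa,[\Gamma_i]\rangle$ of the spin class on Johnson's tangential lift $\Gamma_i\subset S(T_C)$ of $C_i$. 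This is not a convention-matching formality; it is the entire content of the theorem. In the paper it is Corollary \ref{myco}, deduced from Lemma \ref{mylemma}, whose proof requires showing that, after normalizing the sign of the Real structure $\tilde\iota_0$ lifting $\tilde\iota_{\rm can}$, the fixed locus $\resto{S(\kappa)^{\tilde\iota_0}}{C_i}$ is the $\sigma$-preimage of the canonical lift $\Gamma_i$ rather than of $-\Gamma_i$, and that the two $\Z_2$-coverings $\resto{S(\kappa)^{\tilde\iota_0}}{C_i}\to\Gamma_i$ and $\resto{S(\kappa)^{\tilde\iota_0}}{C_i}\to C_i$ are isomorphic. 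Without this, your non-separating case is circular (it invokes the conclusion), and your separating case secretly uses the same fact: nothing short of that lemma explains why the bounding spin structure should force $\resto{\kappa^{\tilde\iota_\kappa}}{C_i}$ to be the M\"obius bundle. Once Corollary \ref{myco} is in hand, the theorem follows for every $\kappa$ simultaneously from Libgober's identification $q_\kappa=\omega_{\xi_\kappa}$ and Johnson's formula $\hat\eta=[\tilde c]+z$ with $\langle\xi_\kappa,z\rangle=1$ --- which is the paper's proof --- and the reduction to a single $\kappa$ becomes superfluous.

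A second, independent problem is the proposed passage to "model situations" by deformation and pinching. The topological type $(g,r,a)$ and the separating or non-separating character of $C_i$ are locally constant in families of smooth Real curves, so no deformation connects a non-separating $C_i$ to a separating one; a pinching degeneration leaves the smooth locus, where the deformation invariance of $h^0(\kappa)\ (\mathrm{mod}\ 2)$ is no longer automatic and requires a separate analysis of theta characteristics on nodal Real curves. As stated, this step would not go through, and in any case it cannot replace the missing geometric identification above.
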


In order to prove this theorem we need some preparations:\\

Let $(C,g)$ be a closed, connected oriented Riemann surface.    We identify the $SO(2)$-frame bundle $P_g\to C$ of $(C,g)$ with  the sphere bundle $q:S(T_C)\to C$ in the natural way, and we fix a $\Spin$-structure $\sigma:Q\to S(T_C)$ of $C$.
A simple oriented closed curve $c$ in $C$ yields a simple closed curve $\tilde c\subset S(T_C)$ given by the unit tangent vectors of $c$  which are compatible with the orientation. This defines a homology class $[\tilde c]\in H_1(S(T_C),\Z_2)$. Changing the orientation of $c$ will give a different lift in $S(T_C)$, but the $\Z_2$-homology classes defined by the two lifts coincide. 
Any homology class $\eta\in H_1(C,\Z_2)$ can be represented by a union of pairwise disjoint simple closed curves $c_1,\dots,c_m$. 
We will need the following important {\it lifting result} of Johnson \cite{J}: \\\\ 
{\it  Let $z\in H_1(S(T_C),\Z_2)$ be the homology class of a fiber. Putting
$$\hat \eta:= \sum_{i=1}^m [\tilde c_i]+ m z
$$
defines a canonical lifting map  \^{ } $: H_1(C,\Z_2)\to H_1(S(T_C),\Z_2)$ 
 satisfying the identity}
$$\widehat{\eta+\eta'}=\hat \eta+\hat \eta'+(\eta\cdot \eta')z\ .
$$

Now let $\xi \in H^1(S(T_C),\Z_2)$ be the class of a $\Spin$-structure on $C$. This is equivalent to the condition $\langle \xi,z\rangle=1$.  Johnson defines a map  $\omega_\xi:H_1(C,\Z_2)\to \Z_2$ by
$$\omega_\xi(\eta):=\langle \xi,\hat \eta\rangle\ ,
$$
where the canonical lift $\hat\eta$ is defined by a  $\Spin$-structure  $\sigma:Q\to S(T_C)$ in the  class  $\xi$. This map satisfies the identity
$$\omega_\xi(\eta+\eta')=\omega_\xi(\eta)+\omega_\xi(\eta')+\eta\cdot \eta'\ .
$$
In other words $\omega_\xi$  is a quadratic refinement of the (mod 2) intersection  form.\\

Let $Q(H_1(C,\Z_2),\cdot)$  be the set of quadratic 
refinements of the 
(mod 2) intersection form and denote by $\Spin(C)$ the set of equivalence classes of $\Spin$-structures on $C$. Note that there is a well known bijection $\xi:S(C)\to\Spin(C)$ between the set of theta characteristics and the set of equivalence classes of $\Spin$-structures on $C$ \cite{A2}, \cite{N}.  We have just explained that Johnson's construction defines a map $\omega:\Spin(C)\to Q(H_1(C,\Z_2),\cdot)$. On the other hand, by the Riemann-Mumford relations, Mumford's construction yields a map $q:S(C)\to Q(H_1(C,\Z_2),\cdot)$.  Libgober  has shown \cite{L} that the following diagram commutes:
$$\begin{array}{ccc}
S(C)& & \\
&   {\searrow} \raisebox{1.5ex} {$q$} &  \\
\xi\downarrow&&Q(H_1(C,\Z_2),\cdot)\\
&\nearrow \hspace{-2mm}\raisebox{-1.5ex}{$\omega$}&\\
\Spin(C)
\end{array}
$$

More precisely one has
\begin{equation} \label{Li}
q_\kappa(\eta)=\omega_{\xi_{[\kappa]}}(\eta)\ \ \forall \eta\in H_1(C,\Z_2)\ . 
\end{equation}

Now let $g$ be a $\iota$-invariant Hermitian metric on $C$ and denote again by $S(T_C)$ the sphere bundle of the real  tangent bundle  $T_C$ of $C$.   Let  $\kappa$ be a holomorphic line bundle representing a   theta characteristic. We choose a holomorphic isomorphism $\phi:\kappa^{\otimes 2}\to \omega_C$ with the canonical line bundle $\omega_C$, and we endow $\kappa$ with the  Hermitian metric induced via $\phi$ from the real cotangent bundle $T^*_C$, which is the underlying differentiable line bundle of $ \omega_C$. Via the standard  identification $P_g=S(T_C)$ the $\Spin$-structure associated with $\kappa$ is the double cover 
$$\sigma:S(\kappa)\textmap{\otimes 2} S(\kappa^{\otimes2})\textmap{\phi}S(T^*_C)\textmap{\simeq} S(T_C)\ .$$
Note that, by the definition of the map $\xi$, $\sigma$ represents the class $\xi_{[\kappa]}$. The sphere bundle $S(\kappa)$ can be also regarded as  an $S^1$-bundle over $C$ via the composition $\rho:=q\circ \sigma$.  Note that the holomorphic  line bundle  $\omega_C$ comes with  a {\it canonical} anti-holomorphic $\iota$-Real structure $\tilde\iota_{\rm can}$ acting on local holomorphic 1-forms by $\eta\mapsto \iota^*(\bar\eta)$.  The induced involution on $S(T_C)$ is just the tangent map $\iota_*$ of $\iota$.
\begin{re} There exists an anti-holomorphic $\iota$-Real structure $\tilde\iota_{0}$ on $\kappa$ which lifts the canonical $\iota$-Real structure $\tilde\iota_{\rm can}$ on $\omega_C$ via $\phi\circ (\cdot)^{\otimes 2}$.  This  $\iota$-Real structure is unique up to sign.
\end{re} 
Indeed,  Let $\tilde\iota$ be any  anti-holomorphic $\iota$-Real structure on $\kappa$ (see  Proposition \ref{FC}).   The  $\iota$-Real structure induced on  $\omega_C$ via $\phi\circ (\cdot)^{\otimes 2}$ is well-defined and anti-holomorphic, so it is equivalent with $\tilde\iota_{\rm can}$ modulo a constant $\zeta\in S^1$. It suffices to put $\tilde\iota_0:=z\tilde\iota$, where $z$ is a square root of $\zeta$.
\qed
 \begin{lm} \label{mylemma}
 Let $\kappa$ be a theta characteristic endowed with a $\iota$-Real structure $\tilde\iota_0$ lifting $\tilde\iota_{\mathrm{can}}$, and let  $\sigma:S(\kappa)\to S(T_C)$ be  the associated $\Spin$-structure. Let $\gamma:S^1\to C^\iota$ be a parametrization   with unit speed  of a connected component $C_0\subset C^\iota$, and let $\Gamma_0\subset S(T_C)$ be the image of the tangent map $\gamma_*$. Replacing $\tilde\iota_0$ by $-\tilde\iota_0$ if necessary the following holds:
 \begin{enumerate}
 \item $\sigma(S(\resto{\kappa)^{\tilde \iota_0}}{C_0})=\Gamma_0$ and the obvious restrictions of $\sigma$, $\rho$ and $q$ define a commutative diagram
 $$\begin{array}{ccc}
 \resto{S(\kappa)^{\tilde \iota_0}}{C_0}&\textmap{=}&\resto{S(\kappa)^{\tilde \iota_0}}{C_0}\\  \\
\ \ \ \  \downarrow {\sigma_0}& &\downarrow \rho_0\\ \\
 \Gamma_0&\textmap{\resto{q}{\Gamma_0}}& C_0
 \end{array}
 $$
 \item The principal $\Z_2$-bundles   $\sigma_0:\resto{S(\kappa)^{\tilde \iota_0}}{C_0}\to \Gamma_0$, $\rho_0:\resto{S(\kappa)^{\tilde \iota_0}}{C_0}\to C_0$ are isomorphic via $\resto{q}{\Gamma_0}$.
 \end{enumerate}
\end{lm}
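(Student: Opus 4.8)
The plan is to realize both $\sigma_0$ and $\rho_0$ as two base-identifications of one and the same principal $\Z_2$-bundle $E:=\resto{S(\kappa)^{\tilde\iota_0}}{C_0}$, and to show the two bases are matched by $\resto{q}{\Gamma_0}$. First I would describe $E$ fibrewise: over $x\in C_0$ the anti-linear involution $\tilde\iota_0$ restricts to an anti-linear involution of the complex line $\kappa_x$, whose fixed real line $\kappa_x^{\tilde\iota_0}$ has unit circle the two-point set $\{\pm v_0\}$. Hence $\rho_0:E\to C_0$ is a double cover with free deck transformation $v\mapsto -v$. Next I would compute the fixed locus of the involution that $\tilde\iota_{\mathrm{can}}$ induces on $S(T_C)$, which, as noted before the lemma, is $\iota_*$. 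In a holomorphic coordinate $z=s+it$ adapted to the component, with $\iota(z)=\bar z$ and $C_0=\{t=0\}$, one has $d\iota_x(\partial_s)=\partial_s$ and $d\iota_x(\partial_t)=-\partial_t$, so $\resto{S(T_C)^{\iota_*}}{C_0}=\{\pm\partial_s\}=\Gamma_0\sqcup(-\Gamma_0)$, the two unit-tangent circles of $C_0$. Since $\iota$ fixes $C_0$ pointwise, $\Gamma_0=\im(\gamma_*)$ does lie in this locus, and $\resto{q}{\Gamma_0}:\Gamma_0\to C_0$ is a diffeomorphism, inverting the tangent section $\gamma_*$.

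The heart of the argument is to pin down where $\sigma$ sends $E$. Because $\tilde\iota_0$ lifts $\tilde\iota_{\mathrm{can}}$ through $\phi\circ(\cdot)^{\otimes 2}$, the map $\sigma$ intertwines the two involutions, i.e. $\sigma(\tilde\iota_0 v)=\tilde\iota_{\mathrm{can}}(\sigma(v))$; hence $\sigma(E)\subseteq\resto{S(T_C)^{\iota_*}}{C_0}=\Gamma_0\sqcup(-\Gamma_0)$. The key point is that $\sigma$ squares: over $x\in C_0$ one has $\sigma(v_0)=\sigma(-v_0)=\phi(v_0^{\otimes 2})$ (viewed in $S(T_C)$), so $\sigma$ collapses the fibre $\{\pm v_0\}$ to a single point, and $\resto{\sigma}{E}$ defines a continuous section of the double cover $\Gamma_0\sqcup(-\Gamma_0)\to C_0$. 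As $C_0$ is a connected circle and this cover is trivial, the section lands entirely in one of the two sheets. Replacing $\tilde\iota_0$ by $-\tilde\iota_0$ exchanges the sheets: it replaces the fixed real line $\R v_0$ by $\R\,(iv_0)$, and $(iv_0)^{\otimes 2}=-v_0^{\otimes 2}$ reverses the ray in $\kappa^{\otimes 2}$, hence flips $\partial_s\mapsto-\partial_s$. After this normalization $\sigma(E)=\Gamma_0$, and commutativity of the diagram follows from $\rho=q\circ\sigma$, which gives $\rho_0=(\resto{q}{\Gamma_0})\circ\sigma_0$. This proves (1).

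For (2) I would note that $\sigma_0:E\to\Gamma_0$ is itself a double cover with the \emph{same} total space $E$, the \emph{same} fibres $\{\pm v_0\}$, and the \emph{same} deck transformation $v\mapsto -v$ as $\rho_0$, since $\sigma(-v)=\sigma(v)$ and $\rho_0(-v)=\rho_0(v)$. Thus $\id_E$ is $\Z_2$-equivariant for both bundle structures; together with the diffeomorphism $\resto{q}{\Gamma_0}:\Gamma_0\to C_0$ intertwining the projections, this exhibits $\id_E$ as an isomorphism of principal $\Z_2$-bundles from $(E,\sigma_0)$ to $(E,\rho_0)$ covering $\resto{q}{\Gamma_0}$, which is the assertion. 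I expect the main obstacle to be the sign bookkeeping of the previous paragraph: one must verify rigorously that the connectedness / continuous-section argument forces a single sheet, and that the ambiguity $\tilde\iota_0\mapsto-\tilde\iota_0$ is exactly what toggles $\Gamma_0$ against $-\Gamma_0$ through the squaring map. The coordinate identification of $\sigma(E)$ with tangent (rather than normal) directions $\pm\partial_s$ also requires handling the metric identification $S(T^*_C)\simeq S(T_C)$ with care.
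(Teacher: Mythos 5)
Your proposal is correct and follows essentially the same route as the paper: identify $\resto{S(T_C)^{\iota_*}}{C_0}$ with $\Gamma_0\sqcup(-\Gamma_0)$, observe that $\sigma$ carries $\resto{S(\kappa)^{\tilde\iota_0}}{C_0}$ into this locus and identifies antipodal fibre elements, so that the image is a section of the trivial double cover over the connected circle $C_0$ and hence lies in a single sheet, with the sign ambiguity resolved by the substitution $\tilde\iota_0\mapsto -\tilde\iota_0$ (which multiplies the fixed circle by $i$ and hence the image in $S(T_C)$ by $-1$). Your fibrewise and coordinate computations only make explicit what the paper leaves implicit; no gap.
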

\begin{proof}

Note first that the restriction $\resto{S(T_C)^{\iota_*}}{C_0}$ of the fixed point locus $S(T_C)^{\iota_*}$ to $C_0$ is the disjoint union $\Gamma_0\cup -\Gamma_0$.    
\\ \\
{\it Claim:} Either $\sigma^{-1}(\Gamma_0)= \resto{S(\kappa)^{\tilde \iota_0}}{C_0}$,  or $\sigma^{-1}(-\Gamma_0)= \resto{S(\kappa)^{\tilde \iota_0}}{C_0}$. 
\\

Indeed, for any $l \in \resto{S(\kappa)^{\tilde \iota_0}}{C_0}$ one has $\sigma(l)\in  \resto{S(T_C)^{\iota_*}}{C_0}$, because $\tilde\iota_0$ is a lift of $\iota_*$ via $\sigma$.  Therefore   either $\sigma(l)\in \Gamma_0$, or  $\sigma(l)\in -\Gamma_0$. Two elements $l_1$, $l_2\in \resto{S(\kappa)^{\tilde \iota_0}}{C_0}$ have the same image via $\sigma$ if and only if $l_2=\pm l_1$, because $\sigma$ is equivariant  with respect to $S^1\textmap{(\cdot)^2} S^1$ and is  fiberwise equivalent to the standard double cover of $S^1$. Therefore the quotient $\qmod{ \resto{S(\kappa)^{\tilde \iota_0}}{C_0}}{\{\pm \id\}}$ is mapped injectively to $\resto{S(T_C)^{\iota_*}}{C_0}$. On the other hand, the three projections
$$\qmod{ \resto{S(\kappa)^{\tilde \iota_0}}{C_0}}{\{\pm \id\}}\to C_0\ ,\ \Gamma_0\to C_0\ ,\ -\Gamma_0\to C_0
$$
are all diffeomorphisms. Hence the image of $\qmod{ \resto{S(\kappa)^{\tilde \iota_0}}{C_0}}{\{\pm \id\}}$ via $\sigma$ must be a section  of $ \resto{S(T_C)^{\iota_*}}{C_0}\to C_0$, which proves the claim.\\

If we replace $\tilde \iota_0$ by $-\tilde\iota_0$ the new fixed point locus  $S(\kappa)^{-\tilde \iota_0}$ will be  $iS(\kappa)^{\tilde \iota_0}$, and the  multiplication  by $i$ on $S(\kappa)$ corresponds to the  multiplication  by $-1$  on $S(T_C)$. Therefore, replacing  $\tilde \iota_0$ by $-\tilde\iota_0$ if necessary, we can   assume that $\sigma^{-1}(\Gamma_0)= \resto{S(\kappa)^{\tilde \iota_0}}{C_0}$. This proves the first statement. The second follows directly from the first.
\end{proof}
\begin{co} \label{myco} Let $\xi_{[\kappa]}\in H^1(S(T_C),\Z_2)$ be the first Stiefel-Whitney class of the $\Z_2$-bundle $\sigma:S(\kappa)\to S(T_C)$. Then, for every connected component $C_0$ of $C^\iota$ one has
$$\langle \xi_{[\kappa]},[\Gamma_0]\rangle =\langle w([\kappa]),[C_0]_2\rangle\ .
$$
\end{co}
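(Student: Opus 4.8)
The plan is to read both sides of the identity as mod-$2$ degrees of principal $\Z_2$-bundles over the circle $C_0$ (equivalently over $\Gamma_0$), and then to let Lemma \ref{mylemma} perform the matching. The corollary is essentially a translation of that lemma into characteristic-class language, so the only real work is fixing the correct dictionary and checking that the sign ambiguity in $\tilde\iota_0$ washes out.

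First I would rewrite the left-hand side. By definition $\xi_\kappa=w_1(\sigma)$ is the class of the principal $\Z_2$-bundle $\sigma:S(\kappa)\to S(T_C)$, so naturality of the Kronecker pairing gives
$$\langle \xi_\kappa,[\Gamma_0]\rangle = \langle w_1(\resto{\sigma}{\Gamma_0}),[\Gamma_0]\rangle,$$
where $\resto{\sigma}{\Gamma_0}$ denotes the restriction of this $\Z_2$-bundle to $\Gamma_0\subset S(T_C)$. After replacing $\tilde\iota_0$ by $-\tilde\iota_0$ if necessary, part (1) of Lemma \ref{mylemma} identifies $\sigma^{-1}(\Gamma_0)$ with $\resto{S(\kappa)^{\tilde\iota_0}}{C_0}$, so $\resto{\sigma}{\Gamma_0}$ is exactly the bundle $\sigma_0$ of the lemma.

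Next I would rewrite the right-hand side. Since $\tilde\iota_0$ is an antilinear isometric involution, its fixed locus inside the unit circle bundle $S(\kappa)$ is the $S^0$-bundle of the real line bundle $\kappa^{\tilde\iota_0}$; over $C_0$ this is precisely $\rho_0:\resto{S(\kappa)^{\tilde\iota_0}}{C_0}\to C_0$. Under the standard identification of $H^1(-,\Z_2)$ with isomorphism classes of principal $\Z_2$-bundles, the first Stiefel-Whitney class of a real line bundle coincides with the class of its $S^0$-bundle; together with the definition of $w$ via $\Fg_X$ (so that $\resto{w(\kappa)}{C_0}=w_1(\resto{\kappa^{\tilde\iota_0}}{C_0})$) this yields $\langle w(\kappa),[C_0]\rangle=\langle w_1(\rho_0),[C_0]\rangle$.

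Finally I would invoke part (2) of Lemma \ref{mylemma}: the bundles $\sigma_0$ and $\rho_0$ are isomorphic as principal $\Z_2$-bundles through the diffeomorphism $\resto{q}{\Gamma_0}:\Gamma_0\to C_0$, which carries $[\Gamma_0]$ to $[C_0]$ in $\Z_2$-homology. Hence $w_1(\sigma_0)=(\resto{q}{\Gamma_0})^*w_1(\rho_0)$, and the two pairings agree, giving the claim. The one point I would double-check — and the only place a careless argument could go wrong — is that neither side feels the sign ambiguity in $\tilde\iota_0$: the class $\xi_\kappa$ does not involve $\tilde\iota_0$ at all, and replacing $\tilde\iota_0$ by $-\tilde\iota_0$ turns $\kappa^{\tilde\iota_0}$ into $i\cdot\kappa^{\tilde\iota_0}$, an isomorphic real line bundle, so $w(\kappa)$ is unchanged. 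Thus the normalization made in Lemma \ref{mylemma} is harmless, and since all the geometric content is already in that lemma, I expect no further obstacle.
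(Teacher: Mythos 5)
Your proposal is correct and follows exactly the route the paper intends: the corollary is stated as an immediate consequence of Lemma \ref{mylemma}, obtained by reading both pairings as isomorphism classes of the principal $\Z_2$-bundles $\sigma_0$ and $\rho_0$ over $\Gamma_0\simeq C_0$ and invoking part (2) of that lemma. Your additional check that the sign ambiguity in $\tilde\iota_0$ affects neither side is a worthwhile observation that the paper leaves implicit.
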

\vspace{6mm}
Now the proof of Theorem \ref{main} is immediate:

\begin{proof}  (of Theorem \ref{main}) Using Libgober's formula (\ref{Li}) and Corollary \ref{myco} one obtains:
$$q_\kappa([C_i]_2)=\omega_{\xi_{[\kappa]}}([C_i]_2)=\langle \xi_{[\kappa]}, \widetilde{[C_i]}_2+z\rangle=\langle \xi_{[\kappa]},\widetilde{[C_i]}_2\rangle+1=\langle w([\kappa]),[C_i]_2\rangle+ 1\ .
$$
\end{proof}

\begin{co} \label{comessatti} One has $q_\kappa([C^\iota]_2)\equiv s$ \hbox{ (mod 2)}, where $s$ is  the Comessatti characteristic of $(C,\iota)$.
\end{co}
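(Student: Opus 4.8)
The plan is to evaluate $q_\kappa$ on the total real locus by decomposing $[C^\iota]_2=\sum_{i=1}^r[C_i]_2$ in $H_1(C,\Z_2)$ and using that $q_\kappa$ is a quadratic refinement of the mod $2$ intersection form. Since the $C_i$ are the pairwise disjoint circle components of $C^\iota$, every mixed intersection number $[C_i]_2\cdot[C_j]_2$ with $i\ne j$ vanishes, so iterating the Riemann--Mumford relation (\ref{RM}) kills all cross terms and yields
$$q_\kappa([C^\iota]_2)=\sum_{i=1}^r q_\kappa([C_i]_2)\ .$$
I would then feed in Theorem \ref{main}, which evaluates each summand as $q_\kappa([C_i]_2)=\langle w(\kappa),[C_i]\rangle+1$. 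Summing over the $r$ components gives
$$q_\kappa([C^\iota]_2)\equiv r+\sum_{i=1}^r\langle w(\kappa),[C_i]\rangle\equiv r+\langle w(\kappa),[C^\iota]_2\rangle\pmod 2\ .$$

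The next step is to dispose of the pairing $\langle w(\kappa),[C^\iota]_2\rangle$, which is nothing but the $\Z_2$-degree $\deg_{\Z_2}(w_1(\kappa^{\tilde\iota}))$ of the fixed-point line bundle. Here I invoke the Corollary of Appendix B (already applied in the proof of Theorem \ref{ClassOnKlein}), stating that $\deg_{\Z_2}(w_1(L^{\tilde\iota}))\equiv\langle c_1(L),[C]\rangle\pmod 2$ for any $\iota$-Real line bundle $(L,\tilde\iota)$. Taking $L=\kappa$ and using $\deg\kappa=g-1$ (because $\kappa^{\otimes2}=\omega_C$ has degree $2g-2$), this gives $\langle w(\kappa),[C^\iota]_2\rangle\equiv g-1\pmod 2$, whence
$$q_\kappa([C^\iota]_2)\equiv r+g-1\pmod 2\ .$$

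It remains to identify $r+g-1$ with the Comessatti characteristic $s$ modulo $2$, and this is the only genuinely topological input: I claim $s=g+1-r$. This follows from the dimension count already carried out for Theorem \ref{ClassOnKlein}. On the one hand, a Comessatti basis of $(H^1(C,\Z),\iota^*)$ identifies $H^1_{\Z_2}(H^1(C,\Z)(1))$ with $\Z_2^{r-1}$. On the other hand, the same basis shows this group has $\Z_2$-rank $g-s$: the $\iota_*$-invariant subspace $H_1(C,\Q)^{\iota_*}$ has dimension $b_1(C/\langle\iota\rangle)=g$ (by transfer, together with $\chi(C)=2\chi(C/\langle\iota\rangle)$), so among the $2g$ Comessatti generators exactly $s$ are of hyperbolic ($\beta$-)type while $g-s$ are of trivial type, and the $(1)$-twist in $H^1_{\Z_2}$ picks up precisely the latter. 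Hence $g-s=r-1$, i.e. $s=g+1-r\equiv r+g-1\pmod 2$, and the corollary follows. The homological part of the argument is routine once Theorem \ref{main} is available; the point requiring care is this last bookkeeping step, namely pinning down $s$ in terms of $(g,r)$ via the Comessatti decomposition, after which the two parities match automatically.
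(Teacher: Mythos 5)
Your proof is correct and follows essentially the same route as the paper's: sum Theorem \ref{main} over the components (the cross terms vanish since the $C_i$ are disjoint), use the Appendix B corollary to get $\langle w(\kappa),[C^\iota]_2\rangle\equiv \deg\kappa = g-1$, and conclude via $s=g+1-r$. The only difference is that the paper simply cites $s=g+1-r$ from Silhol, whereas you rederive it from the Comessatti basis and a Lefschetz/transfer count; that extra justification is sound but not needed.
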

\begin{proof} The Comessatti characteristic of $(C,\iota)$ is given by the formula $s=g+1-r$ (see \cite{S}).    On the other hand, using the results in Appendix B   it follows:
$$\langle w([\kappa]),[C^\iota]_2\rangle=\langle  c_1(\kappa),[C]\rangle\ (\hbox {mod }2)  $$
Applying Theorem \ref{main}  we get:
$$q_\kappa([C^\iota]_2)=\langle w([\kappa]),[C^\iota]_2\rangle+ r \ (\hbox{ mod }2)\ \equiv g-1+r \ ( \hbox{ mod }2)\ \equiv s\  ( \hbox{ mod }2) \ .
$$
\end{proof}
As we announced in the introduction, we like to summarize  some of Natanzon's results in our terminology. 
{\ }\\ \\
{\it Natanzon's results}:\\

Let $(C,\iota)$ be a Klein surface of type $(g,r,a)$ with $r>0$, and let $C^\iota=\coprod_{i=1}^r C_i$ be the decomposition of the fixed point set. The labeling of the following theorems corresponds to the labeling in \cite{N}.
\\ \\
{\bf Theorem 5.2.} (Natanzon 1995/1996) {\it Let $\kappa$ be an effective Real theta characteristic. Then one has:
$$\langle w([\kappa]),[C_i]_2\rangle+1=q_\kappa([C_i]_2)\ \ \forall i=1,\dots,r
$$
\\
}
{\bf Theorem 5.3.} (Natanzon 1995/1996) {\it Suppose $a(C,\iota)=1$. For every system $(w^1,\dots,w^r)$ of classes $w^i\in H^1(C_i,\Z_2)$ such that $\sum \langle w^i,[C_i]_2\rangle \equiv g-1$ (mod 2), there exists an effective Real theta characteristic $\kappa$ such that
$$\resto{w([\kappa])}{C_i}=w^i \ \ \forall i=1,\dots,r\ .
$$
}
\\
{\bf Theorem 5.4.} (Natanzon 1995/1996) {\it Suppose $a(C,\iota)=0$. For every system $(w^1,\dots,w^r)\ne (1,1,\dots,1)$ of classes $w^i\in H^1(C_i,\Z_2)$ such that $\sum \langle w^i,[C_i]_2\rangle \equiv g-1$ (mod 2), there exists an effective Real theta characteristic $\kappa$ such that
$$\resto{w([\kappa])}{C_i}=w^i \ \ \forall i=1,\dots,r\ .
$$
}
\\
{\bf Remark:}  When $a(C,\iota)=0$, and $\kappa$ is a Real theta characteristic with $\resto{w([\kappa])}{C_i}=1$ for all $i=1,\dots,r$, then $\kappa$ is not effective (see \cite{GH}, Corollary 5.3). 
\\

The proofs of Natanzon's results above are based on the following fundamental theorem:
\\ \\
{\bf Theorem 4.1/5.1.} (Natanzon 1990/1996)
{\it  There exists a bijection  between similarity classes of liftings of real Fuchsian groups defining $(C,\iota)$ and Real theta characteristics on $(C,\iota)$.
}
\vspace{1cm}\\
 Recall \cite{GH}  that the orientation obstruction $a(C,\iota)$ is 0 when $C/\langle\iota\rangle$ is orientable and $a(C,\iota)$ is 1 when not. One has $a(C,\iota)=0$ if and only if $C\setminus C^\iota$ has two connected components.

Note that the submodule $\left\langle\{[C_i]|\  1\leq i\leq r\}\right \rangle_\Z$ generated by the classes of the circles $C_i$  is obviously contained  in the $\iota_*$-invariant  submodule $H_1(C,\Z)^{\iota_*}$ of $H_1(C,\Z)$. We will see that this submodule together with the submodule  $(\id+\iota_*) H_1(C,\Z)$ of trivial invariants generates   $H_1(C,\Z)^{\iota_*}$.
We refer to \cite{CN} for the following  result:
\begin{lm}\label{bases} Let $(C,\iota)$  be a Klein surface of genus $g$, $r$ the number of connected components of $C^\iota$, and denote by $s =g-r+1$ its Comessatti characteristic. Choose an orientation of  $C^\iota$, introduce an order relation $(C_1,\dots,C_r)$ on its set of connected components, and  put $v_i:=[C_i]\in H_1(C,\Z)$.

\begin{enumerate}
\item Suppose that the quotient $\qmod{C}{\langle\iota\rangle}$ is orientable, and put $k:=\frac{s}{2}$ 
\begin{enumerate}
\item The homology group $H_1(C,\Z)$ admits a symplectic basis of the form
$$(v_1,\dots,v_{r-1},x_1,\dots,x_k,\iota_* x_1,\dots,\iota_* x_k,w_1,\dots w_{r-1},y_1,\dots, y_k,-\iota_* y_1,\dots,-\iota_* y_k)
$$
where $\iota_* w_i=-w_i$.
\item The associated basis  
$$(\underline{a},\underline{b}):=(v_1,\dots,v_{r-1},x_1+\iota_* x_1,\dots,x_k+\iota_* x_k,y_1+\iota_* y_1,\dots, y_k+\iota_* y_k,$$
$$ w_1,\dots, w_{r-1}, y_1,\dots, y_k,\iota_* x_1,\dots,\iota_* x_k)
$$
is symplectic,  its first $g$ basis vectors $(a_1,\dots,a_g)$ are $\iota_*$-invariant, whereas the last $g$ basis vectors $(b_1,\dots,b_g)$ satisfy:
$$\iota_* b_i=-b_i\hbox{ for } 1\leq i\leq r-1,\ \iota_* b_{j}=a_{k+j}-b_j \hbox{ for } r\leq j\leq r+k-1,
$$
$$
\iota_* b_l=a_{l-k}-b_l \hbox{ for } k+r\leq l\leq 2k+r-1=g $$
\end{enumerate}
\item Suppose that the quotient $\qmod{C}{\langle\iota\rangle}$ is not orientable.  There exists a symplectic basis 
$$(\underline{a},\underline{b})=(a_1,\dots,a_g,b_1,\dots,b_g)
$$
of $H_1(C,\Z)$ such that $a_i=v_i$ for $1\leq i\leq r$, the first $g$ basis vectors $(a_1,\dots,a_g)$ are $\iota_*$-invariant, whereas the last $g$ basis vectors $(b_1,\dots,b_g)$ satisfy the identities: 
$$\iota_* b_j=\left\{ \begin{array}{ccc}
-b_j-\sum_{i=1}^g a_i&\rm for& 1\leq j\leq r\\ \\
-b_j-a_j-\sum_{i=1}^g a_i &\rm for& r+1\leq j\leq g
\end{array}\right.
$$
\end{enumerate} 
\end{lm}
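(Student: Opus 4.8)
The plan is to read the lemma as a normal-form statement for the triple $(H_1(C,\Z),\langle\cdot,\cdot\rangle,\iota_*)$ --- first homology, its intersection pairing, and the involution induced by $\iota$ --- decorated with the invariant classes $v_i=[C_i]$. The first thing I would record is that $\iota$ reverses orientation, so $\iota_*$ acts by $-1$ on $H_2(C,\Z)$ and is therefore \emph{anti-symplectic}: $\langle\iota_*x,\iota_*y\rangle=-\langle x,y\rangle$. Over $\Q$ this forces the eigenspaces $V_\pm:=\ker(\iota_*\mp\id)_\Q$ to be isotropic and dually paired, so $\dim_\Q V_+=\dim_\Q V_-=g$ and $V_+$ is a Lagrangian; it is exactly the rational span of the invariant vectors $a_1,\dots,a_g$ to be produced. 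Each fixed circle satisfies $\iota_*v_i=v_i$, since $\iota$ restricts to the identity on $C_i$, so every $v_i$ lies in $V_+$.

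Because the pairing, the $\iota_*$-action, and the ordered oriented classes $v_i$ are all invariants of the equivariant diffeomorphism type of $(C,\iota)$, and because a Klein surface is determined up to equivariant diffeomorphism by its type $(g,r,a)$ (the Weichold classification, with $r\equiv g+1\ (\mathrm{mod}\ 2)$, $1\le r\le g+1$ in the orientable case and $1\le r\le g$ in the non-orientable case), it suffices to verify the statement on one standard model of each type. I would take $C$ to be the Schottky double of a compact bordered surface $F$ with $r$ boundary circles, with $\iota$ the reflection interchanging the two halves and fixing $\partial F=C^\iota$ pointwise: in the orientable case $F$ has genus $k=s/2$, so $g=2k+r-1$ matches $s=g+1-r$, and in the non-orientable case $F$ carries $s=g+1-r$ cross-caps. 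In both models the fixed locus is manifestly the union of the $r$ gluing circles.

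In the orientable model the handle picture yields the system of part (1) at once: the boundary classes $v_1,\dots,v_r$ satisfy the single relation $[\partial F]=0$ (they bound the half $F$), so only $v_1,\dots,v_{r-1}$ are independent; the handle classes $x_1,\dots,x_k,y_1,\dots,y_k$ of one copy of $F$ have images $\iota_*x_j,\iota_*y_j$ in the other copy; and the boundary-dual classes $w_1,\dots,w_{r-1}$ are anti-invariant, $\iota_*w_i=-w_i$. The anti-symplectic identity $\langle\iota_*x_j,\iota_*y_j\rangle=-\langle x_j,y_j\rangle$ accounts for the sign in the block $-\iota_*y_j$. Passing to the adapted basis is then purely formal: symmetrizing $x_j\mapsto x_j+\iota_*x_j$ and $y_j\mapsto y_j+\iota_*y_j$ and keeping the $v_i$ gives the invariant Lagrangian $(a_1,\dots,a_g)$, while $w_i,\ y_j,\ \iota_*x_j$ form the dual block; a direct check of intersection numbers shows this is symplectic, and the twisting relations $\iota_*b_j=a_{k+j}-b_j$ and $\iota_*b_l=a_{l-k}-b_l$ are nothing more than $\iota_*y_j=(y_j+\iota_*y_j)-y_j$ and $\iota_*(\iota_*x_j)=(x_j+\iota_*x_j)-\iota_*x_j$ once the indices are aligned as in the statement. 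The non-orientable model is handled the same way, the cross-caps both making $C\setminus C^\iota$ connected --- so that all $r$ classes $v_i$ are now independent and furnish the leading $a_i$ --- and producing the extra global twist $-\sum_{i=1}^g a_i$ in the relations for $\iota_*b_j$.

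The step I expect to demand the most care is not this rational splitting but its \emph{integral, symplectic} refinement: one must check that the geometric cycles above constitute a \emph{unimodular} symplectic basis realizing exactly the prescribed relations, rather than a basis differing from it by an integral symplectic change of coordinates. This is governed by the Comessatti normal form for $(H_1(C,\Z),\iota_*)$ (Lemma 3.5 of \cite{S}), in which the surface contributes $s$ regular $\Z[\Z_2]$-summands, each responsible for one twisting pair. Reconciling this integral normal form with the marked classes $v_i$ and with the relation $[\partial F]=0$ (respectively the independence of the $v_i$ in the non-orientable case), while keeping track of all indices, orientations and signs, is the genuinely delicate --- if elementary --- bookkeeping, and is precisely the verification carried out in \cite{CN}.
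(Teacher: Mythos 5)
The paper does not actually prove this lemma: it introduces it with the sentence ``We refer to \cite{CN} for the following technical result'' and gives no argument at all. Your proposal therefore supplies strictly more than the paper does, and what you supply is sound. The soft part of your argument is complete and correct: anti-symplecticity of $\iota_*$ forces the rational eigenspaces to be complementary Lagrangians; Weichold's classification legitimately reduces everything to the Schottky-double models; the relation $\sum_i v_i=0$ (resp.\ independence of all the $v_i$) in the orientable (resp.\ non-orientable) case comes from the (dis)connectedness of $C\setminus C^\iota$; and the passage from (1a) to (1b) is, as you say, a purely formal index-shuffling computation --- I checked that the pairings $\langle a_{r-1+j},b_{r-1+j'}\rangle=\delta_{jj'}$ and $\langle a_{r-1+k+j},b_{r-1+k+j'}\rangle=\delta_{jj'}$ do come out right, the second one using $\langle\iota_*y_j,\iota_*x_{j'}\rangle=-\langle y_j,x_{j'}\rangle$. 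The one place where your proof, like the paper's, is not self-contained is exactly where you flag it: verifying that the explicitly drawn curves on the double form a \emph{unimodular} symplectic basis realizing the stated relations on the nose (in particular the global twist $-\sum_i a_i$ in case (2)), which you delegate to \cite{CN} just as the authors do. So your write-up is an honest and correct expansion of the paper's bare citation rather than a divergence from it; to make it fully independent of \cite{CN} you would only need to exhibit the curves $w_i$ (and the cross-cap duals $b_j$ in case (2)) concretely enough to read off their mutual intersection numbers, which is routine but genuinely tedious bookkeeping.
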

Using this lemma, the results of Costa and Natanzon \cite{CN}, and elementary arguments one obtains:
\begin{co} \label{InvGenByCircles} Let $(C,\iota)$ be a Klein surface with $C^\iota\ne\emptyset$.  
\begin{enumerate} 
\item   The natural map 
$$j:\left\langle\{v_i|\  1\leq i\leq r \}\right \rangle_\Z\oplus\big[ (\id+\iota_*) H_1(C,\Z)\big]\map H_1(C,\Z)^{\iota_*}$$
is always surjective.
\item If $a(C,\iota)=0$  then, 
orienting  the curves $C_i$ in a suitable way,  one has a short exact  sequence
$$1\map  \Z\sum_{i=1}^{r }v_i\map  \oplus_{i=1}^{r } \Z v_i\map \left\langle v_1,\dots,v_r  \right\rangle_\Z\to  0\ ,
$$
and  
$$\left\langle v_1,\dots,v_r\right \rangle_\Z\cap\big[ (\id+\iota_*) H_1(C,\Z)\big]=\left\langle 2v_1,\dots, 2 v_r \right \rangle_\Z\ .
$$
\item If $a(C,\iota)=1$, then the canonical epimorphism $$\oplus_{i=1}^{r } \Z v_i \map \left\langle v_1,\dots,v_r \right\rangle_\Z$$ is an isomorphism and    one has:
$$H_1(C,\Z)^{\iota_*}=\langle a_1,\dots,a_g\rangle\ ,\ (\id+\iota_*) H_1(C,\Z)=\langle 2a_1,\dots, 2a_g,a_{r+1},\dots,a_g,\sum_{i=1}^g a_i\rangle_\Z $$
$$=\langle 2a_1,\dots, 2a_g,a_{r+1},\dots,a_g,\sum_{i=1}^r a_i\rangle_\Z\ ,
$$
$$(\id+\iota_*) H_1(C,\Z) \cap \langle a_1,\dots,a_r\rangle =\langle 2a_1,\dots, 2a_r, \sum_{i=1}^r a_i\rangle_\Z=\langle 2a_1,\dots, 2a_{r-1}, \sum_{i=1}^r a_i\rangle_\Z\ .
$$
Moreover, writing the sum $\sum_{i=1}^{r} v_i$ in the form $\sum_{i=1}^{r}v_i=x+\iota_*(x)$,  one has $x\cdot\iota_*(x)\equiv s$ (mod  2), where $s$ is the Comessatti characteristic of the pair $(H_1(C,\Z),\iota_*)$.
\end{enumerate}

\end{co}

The  torus $\Pic^0(C)$ can be identified with the quotient $H^1(C,{\cal O})/H^1(C,\Z)$, where $H^1(X,\Z)$ is embedded in $H^1(X,{\cal O})$ via the composition
$$H^1(C,\Z)\textmap{\simeq} 2\pi i H^1(C,\Z)\hookrightarrow iH^1(C,\R) \hookrightarrow H^1(X,\C)\textmap{p^{0,1}} H^{0,1}(C)=H^1(C,{\cal O})\ ,
$$
and the Real structure $\hat\iota$ corresponds to  the Real structure defined by the involution $-\iota^*:H^1(C,\Z)\to H^1(C,\Z)$ on this quotient.
We can now conclude with the following theorem, which describes the image of the $\hat\iota$-Real  line bundle $({\cal L}_\kappa,\tilde{\hat\iota})$ on $\Pic^0(C)$ as element in the fiber product 
$$\mathrm{Alt}^2(H^1(C,\Z),\Z)^{(\iota^*)^*}\times_{\Hom((\id-\iota^*)H^1(C,\Z),\Z_2)} \Hom(H^1(C,\Z)^{-\iota^*},\Z_2)$$ appearing in our classification Theorem \ref{ClassifRLBTorus}.
\begin{thry} \label{LK} The element 
$$cw_0([{\cal L}_\kappa,\tilde{\hat\iota}])\in \mathrm{Alt}^2(H^1(C,\Z),\Z)^{(\iota^*)^*}\times_{\Hom((\id-\iota^*)H^1(C,\Z),\Z_2)} \Hom(H^1(C,\Z)^{-\iota^*},\Z_2)$$
is the pair $(u_C,w_\kappa)$, where $w_\kappa:H^1(C,\Z)^{-\iota^*}\to\Z_2$ is   defined as   the unique extension of $f_{u_C}:(\id-\iota^*)H^1(C,\Z)\to\Z_2$ which satisfies the equalities
\begin{equation}\label{ONCI}
w_\kappa([C_i]^\vee)=\langle w([\kappa]),[C_i]_2\rangle+1\ \hbox{ (mod 2)}\ .\end{equation}
\end{thry}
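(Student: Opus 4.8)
The plan is to invoke the classification Theorem~\ref{ClassifRLBTorus}: since $cw_0$ is a bijection, it suffices to compute the two components of $cw_0([{\cal L}_\kappa,\tilde{\hat\iota}])$ separately, working on the torus $\Pic^0(C)=V/\Lambda$ with $\Lambda=H^1(C,\Z)$ and lattice involution $\tau=-\iota^*$ (so $\Lambda^\tau=H^1(C,\Z)^{-\iota^*}$ and $(\id+\tau)\Lambda=(\id-\iota^*)H^1(C,\Z)$). The first component is immediate: the Chern class of ${\cal L}_\kappa={\cal O}_{\Pic^0(C)}(\Theta_\kappa)$ is the cup form $u_C$, as already recorded. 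Thus the entire content lies in identifying the second component $w(0):=w_1\!\left(\resto{{\cal L}_\kappa^{\tilde{\hat\iota}}}{T_0}\right)$ with the map $w_\kappa$.

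First I would reduce $w(0)$ to the Appell--Humbert character. By the discussion preceding Proposition~\ref{AlphaKappa}, ${\cal L}_\kappa$ carries the Appell--Humbert datum $(H_{u_C},\chi_\kappa)$; by Theorem~\ref{AFHE} the $u_C$-character of its Hermite--Einstein connection is $a=\chi_\kappa$, with holonomy $\bar\chi_\kappa=\chi_\kappa$ along the loops $p\circ c_\lambda$ (real, since ${\cal L}_\kappa$ is symmetric). This connection is canonical, hence invariant under the Real structure $\tilde{\hat\iota}$ (unique up to $S^1$ by Proposition~\ref{FC}), so $\chi_\kappa$ is a $\tau$-Real $u_C$-character and Proposition~\ref{GiveSW} with $[\mu]=0$ gives $w(0)(\lambda)=\bar\chi_\kappa(\lambda)=\chi_\kappa(\lambda)$ for $\lambda\in\Lambda^\tau$. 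Proposition~\ref{AlphaKappa} then evaluates this as $\chi_\kappa(\lambda)=(-1)^{q_\kappa(\overline{\lambda\cap[C]})}$.

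Next I would pin down the two characterizing properties. Restricting to $(\id+\tau)\Lambda=(\id-\iota^*)H^1(C,\Z)$, Remark~\ref{restriction} gives $\resto{w(0)}{(\id-\iota^*)H^1(C,\Z)}=f_{u_C}$, so $w(0)$ extends $f_{u_C}$. Evaluating on the Poincaré duals $[C_i]^\vee$ of the boundary circles --- which lie in $\Lambda^\tau$ because $\iota_*[C_i]=[C_i]$ and Poincaré duality intertwines $\tau=-\iota^*$ with $\iota_*$ --- one has $\overline{[C_i]^\vee\cap[C]}=[C_i]_2$, hence $w(0)([C_i]^\vee)=(-1)^{q_\kappa([C_i]_2)}$. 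Inserting Theorem~\ref{main}, $q_\kappa([C_i]_2)=\langle w(\kappa),[C_i]\rangle+1$, reproduces exactly the prescribed values~(\ref{ONCI}).

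Finally I would settle uniqueness and well-definedness. The key simplification is that $u_C$ restricts to zero on $\Lambda^\tau$: since $\tau^*u_C=-u_C$ while $\tau$ fixes $\Lambda^\tau$ pointwise, evaluating on $\lambda,\lambda'\in\Lambda^\tau$ forces $u_C(\lambda,\lambda')=-u_C(\lambda,\lambda')$. Consequently $\chi_\kappa$ restricts to a genuine group homomorphism on $\Lambda^\tau$, so $w(0)\in\Hom(\Lambda^\tau,\Z_2)$ is an honest homomorphism and every consistency condition on overlaps is automatic. By Lemma~\ref{InvGenByCircles}(1) the classes $[C_i]^\vee$ together with $(\id-\iota^*)H^1(C,\Z)$ generate $H^1(C,\Z)^{-\iota^*}$, so a homomorphism extending $f_{u_C}$ and realizing~(\ref{ONCI}) is unique; this simultaneously legitimizes the definition of $w_\kappa$ and identifies $w(0)=w_\kappa$, yielding $cw_0([{\cal L}_\kappa,\tilde{\hat\iota}])=(u_C,w_\kappa)$. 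The main obstacle is the careful transport of the three $\Z_2$-data --- the holonomy character $\chi_\kappa$, Mumford's form $q_\kappa$, and the Johnson--Libgober lift $\omega_{\xi_\kappa}$ --- through the Poincaré identification $\Lambda^\tau\cong H_1(C,\Z)^{\iota_*}$; but the potential compatibility checks on the overlaps computed in Lemma~\ref{InvGenByCircles}(2),(3) evaporate once one observes that $w(0)$ is a genuine homomorphism, so the argument reduces to assembling Propositions~\ref{GiveSW},~\ref{AlphaKappa}, Remark~\ref{restriction}, Theorem~\ref{main} and Lemma~\ref{InvGenByCircles}.
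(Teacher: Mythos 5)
Your proposal is correct and follows essentially the same route as the paper: identify the Hermite--Einstein connection as compatible with $\tilde{\hat\iota}$ so that the Stiefel--Whitney class is read off from holonomy, use Theorem~\ref{AFHE} and Proposition~\ref{AlphaKappa} to express that holonomy via Mumford's form $q_\kappa$, apply Theorem~\ref{main} on the classes $[C_i]$, and conclude by the extension/uniqueness argument from Remark~\ref{restriction} and Lemma~\ref{InvGenByCircles}. The only addition is your explicit observation that $u_C$ vanishes on $\Lambda^\tau\times\Lambda^\tau$, which the paper leaves implicit in its framework.
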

\begin{proof}
Recall that $w_\kappa:H^1(C,\Z)^{-\iota^*}\to\Z_2$ is given by the Stiefel-Whitney class of the restriction $\resto{{\cal L}_\kappa^{\tilde{\hat\iota}}}{T_0}$ of the real line bundle ${\cal L}_\kappa^{\tilde{\hat\iota}}$ to the standard connected component $T_0$ of $\Pic^0(C)^{\hat\iota}$. ${\cal L}_\kappa$ possesses a Hermitian-Einstein metric $h$, which  is $\tilde{\hat\iota}$-anti-unitary. Therefore the Hermitian-Einstein connection $A_\kappa$ on ${\cal L}_\kappa$ is compatible with ${\tilde{\hat\iota}}$, and hence the Stiefel-Whitney class of ${\cal L}_\kappa^{\tilde{\hat\iota}}$ is given by the holonomy of this connection along loops contained in $\Pic^0(C)^{\hat\iota}$ (see Remark \ref{HolSW}). On the other hand, by  Theorem \ref{AFHE} we can read off the factor of automorphy (and hence the holonomy along standard loops) of a Yang-Mills connection gauge equivalent to $A_\kappa$ from the canonical factor of automorphy of ${\cal L}_\kappa$. 

We apply now Proposition \ref{AlphaKappa} which computes this factor of automorphy in terms of Mumford's theta form $q_\kappa$ and Theorem \ref{main} which gives a geometric interpretation for $q_\kappa([C_i]^\vee)$. This proves that  $w_\kappa([C_i]^\vee)=\langle w([\kappa]),[C_i]\rangle+1$, as claimed. On the other hand we know, by the results in section \ref{ClassRLBTorus}, that $w_\kappa$ extends $f_{u_C}$. Finally, by Corollary \ref{InvGenByCircles}   the classes $[C_i]^\vee$ generate $H^1(C,\Z)^{-\iota^*}$ modulo the subgroup $(\id-\iota^*)H^1(C,\Z)$ of trivial anti-invariants, so (\ref{ONCI}) determines the extension $w_\kappa$.
\end{proof}

\begin{re}\begin{enumerate}
\item  The intersection $\langle [C_1]^\vee,\dots, [C_r]^\vee\rangle\cap (\id-\iota^*)H^1(C,\Z)$ is not trivial (see Corollary \ref{InvGenByCircles}). The map $f_{u_C}$ agrees with the map defined by the right hand side of  (\ref{ONCI}) on the intersection. This follows from our results, but can also be checked directly.
\item Using Theorem \ref{LK} and the  difference formula given by Proposition \ref{difference} we get the Stiefel-Whitney classes  of the restrictions of the real line bundle ${\cal L}_\kappa^{\tilde{\hat\iota}}$ to all connected components of $\Pic^0(C)^{\hat\iota}$.
\end{enumerate}
\end{re}

\subsection{Real determinant line bundles } \label{results}

Let $(C,\iota)$ be a Klein surface with $C^\iota\ne\emptyset$. We have seen that $\iota$ induces a Real structure (anti-holomorphic involution) $\hat\iota:\Pic(C)\to\Pic(C)$ on the Picard group of $C$ by
$$\hat\iota([{\cal L}])=[\iota^*(\bar{\cal L})]\ .
$$
This involution leaves the degree invariant, so it induces an anti-holomorphic involution on any connected component $\Pic^d(C)$.  The geometric theta divisor $\Theta\subset\Pic^{g-1}(C)$ defines a $\hat\iota$-invariant holomorphic line bundle $ {\cal O}_{\Pic^{g-1}(C)}(\Theta)$.

For every degree $d\in\Z$ we denote by $\hat{\hat\iota}$ the anti-holomorphic involution induced by $\hat\iota$ on $\Pic(\Pic^d(C))$. Note that $\hat{\hat\iota}$ maps $\Pic^c(\Pic^d(C))$ onto $\Pic^{-\hat\iota^*(c)}(\Pic^d(C))$ for every Chern class $c\in \NS(\Pic^d(C))$.

For every   $\lambda\in\Pic^{g-1}(C)$ we denote by $\Theta-\lambda\subset \Pic^0(C)$ the $(-\lambda)$-translate of the geometric theta divisor $\Theta$, and we denote by 
$${\cal L}_\lambda:={\cal O}_{\Pic^0(C)}(\Theta-\lambda)=(\tau_\lambda)^*{\cal O}_{\Pic^g(C)}(\Theta)$$
  the corresponding line bundle on $\Pic^0(C)$. The Chern class of ${\cal L}_\lambda$ is the element of $H^2(\Pic^0(C),\Z)=\mathrm{Alt}^2(H^1(C,\Z),\Z)$ defined by the cup form $u_C$ of $C$.
The assignment $\lambda\mapsto[{\cal L}_\lambda]$ defines a holomorphic map %
$$\varphi: \Pic^{g-1}(C)\to \Pic^{u_C}(\Pic^0(C))\ .$$
Note that $-\hat\iota^*(u_C)=u_C$, so the involution $\hat{\hat\iota}$ leaves $\Pic^{u_C}(\Pic^0(C))$ invariant.
\begin{lm} The map $\varphi:(\Pic^{g-1}(C),\hat\iota)\to (\Pic^{u_C}(\Pic^0(C)),\hat{\hat\iota})$ is an isomorphism of Real complex manifolds. 
\end{lm}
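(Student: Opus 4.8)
The plan is to verify the two conditions defining an isomorphism of Real complex manifolds: that $\varphi$ is a biholomorphism, and that it intertwines the two anti-holomorphic involutions, $\varphi\circ\hat\iota=\hat{\hat\iota}\circ\varphi$. Holomorphy of $\varphi$ is already recorded above, and both involutions do act on the spaces in question: translation preserves the first Chern class, so $\varphi$ lands in $\Pic^{u_C}(\Pic^0(C))$, and $-\hat\iota^*(u_C)=u_C$ ensures that $\hat{\hat\iota}$ preserves this component, as already noted.

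\emph{Biholomorphism.} I would first observe that $\varphi$ is a morphism of torsors. Writing $s_\lambda:\Pic^0(C)\to\Pic^{g-1}(C)$, $a\mapsto a\otimes\lambda$, and $t_\eta$ for translation by $\eta\in\Pic^0(C)$, one has $s_{\lambda\otimes\eta}=s_\lambda\circ t_\eta$, hence ${\cal L}_{\lambda\otimes\eta}=t_\eta^*{\cal L}_\lambda$ and ${\cal L}_{\lambda\otimes\eta}\otimes{\cal L}_\lambda^{-1}=\phi_{u_C}(\eta)$, where $\phi_{u_C}:\Pic^0(C)\to\Pic^0(\Pic^0(C))$ is the polarization homomorphism $\eta\mapsto[t_\eta^*{\cal L}\otimes{\cal L}^{-1}]$ (which depends only on $c_1=u_C$). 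Thus $\varphi$ intertwines the $\Pic^0(C)$-action on the source by $\otimes$ with the action on the target through $\phi_{u_C}$. Since $u_C$ is the cup form on $H^1(C,\Z)$, which is unimodular, the polarization is principal, so $\phi_{u_C}$ is an isomorphism; a holomorphic torsor morphism over a group isomorphism is bijective with holomorphic inverse, so $\varphi$ is a biholomorphism.

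\emph{Equivariance.} Here the two inputs are that $\hat\iota$ is a group homomorphism on $\Pic(C)$, hence compatible with the tensor action, $\hat\iota(a\otimes\lambda)=\hat\iota(a)\otimes\hat\iota(\lambda)$, and that $\Theta$ is $\hat\iota$-invariant. The first gives $\hat\iota\circ s_\lambda=s_{\hat\iota(\lambda)}\circ\hat\iota$, and inverting this identity yields $\hat\iota\circ s_\lambda^{-1}=s_{\hat\iota(\lambda)}^{-1}\circ\hat\iota$ as maps $\Pic^{g-1}(C)\to\Pic^0(C)$. Applying it to the invariant divisor $\Theta$ gives the geometric identity $\hat\iota(s_\lambda^{-1}\Theta)=s_{\hat\iota(\lambda)}^{-1}\Theta$ in $\Pic^0(C)$. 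It then remains to record the formal fact that, for an anti-holomorphic involution and any divisor $D$, one has $\hat{\hat\iota}\big([{\cal O}_{\Pic^0(C)}(D)]\big)=[{\cal O}_{\Pic^0(C)}(\hat\iota(D))]$; this is checked on transition cocycles, using that if $(g_{ij})$ is a holomorphic cocycle for ${\cal O}(D)$ then $\overline{g_{ij}\circ\hat\iota}=\bar g_{ij}\circ\hat\iota$ is a holomorphic cocycle for ${\cal O}(\hat\iota(D))$. Taking $D=s_\lambda^{-1}\Theta$ and combining with the geometric identity gives $\hat{\hat\iota}({\cal L}_\lambda)=[{\cal O}(\hat\iota(s_\lambda^{-1}\Theta))]=[{\cal O}(s_{\hat\iota(\lambda)}^{-1}\Theta)]={\cal L}_{\hat\iota(\lambda)}=\varphi(\hat\iota(\lambda))$, which is precisely $\hat{\hat\iota}\circ\varphi=\varphi\circ\hat\iota$.

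\emph{Main obstacle.} The analytic content — that $\varphi$ is biholomorphic — is classical and rests only on the principality of the theta polarization. The step needing genuine care is the bookkeeping in the equivariance argument, namely correctly identifying $\hat{\hat\iota}$ of a divisorial line bundle class with the line bundle of the image divisor, i.e. tracking how complex conjugation interacts with the anti-holomorphic pull-back. Once that identity is established, the $\hat\iota$-invariance of $\Theta$ together with the homomorphism property of $\hat\iota$ closes the argument at once.
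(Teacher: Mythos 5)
Your argument is correct and follows essentially the same route as the paper: bijectivity comes from the fact that $\varphi$ is equivariant for the $\Pic^0(C)$-torsor structures on source and target (the target being a torsor precisely because $u_C$ is a principal polarization), and Reality comes from the $\hat\iota$-invariance of $\Theta$ combined with the compatibility of $\hat{\hat\iota}$ with translation. The only cosmetic difference is that you verify the equivariance at the level of divisors and transition cocycles, whereas the paper phrases the same computation via the identity $\hat{\hat\iota}\bigl((\otimes\lambda)^*(\mathscr{L})\bigr)=(\otimes\hat\iota(\lambda))^*\bigl(\hat{\hat\iota}(\mathscr{L})\bigr)$ applied to the $\hat{\hat\iota}$-invariant bundle ${\cal O}_{\Pic^{g-1}(C)}(\Theta)$.
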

\begin{proof} For an element $\lambda_0\in\Pic^0(C)$  one has
$$\varphi(\lambda_0\otimes\lambda)= \tau_{\lambda_0\otimes\lambda}^*[{\cal O}_{\Pic^g(C)}(\Theta)]=\tau_{\lambda_0}^*\tau_\lambda^*[{\cal O}_{\Pic^g(C)}(\Theta)]= \tau_{\lambda_0}^*\varphi(\lambda)\ ,
$$
which shows that $\varphi$ commutes with the natural $\Pic^0(C)$-actions on   $\Pic^{g-1}(C)$ and $\Pic^{u_C}(\Pic^0(C))$. The first manifold is obviously a $\Pic^0(C)$-torsor, whereas the second is  a $\Pic^0(C)$-torsor because $u_C$ is a principal polarization of the torus $\Pic^0(C)$ (see \cite{BL}, p. 36-37). This proves that $\varphi$ is an isomorphism. On the other hand note that the Real structure $\hat{\hat\iota}$ induced on $\coprod_{d\in\Z}\Pic(\Pic^d(C))$ by $\hat\iota$ satisfies the identity
$$\hat{\hat\iota}(\tau_{\lambda}^*([\mathscr{L})])= \tau_{\hat\iota(\lambda)}^*(\hat{\hat\iota}([\mathscr{L}])\ .
$$
Since $\Theta$ is $\hat\iota$-invariant, it follows that the holomorphic line bundle ${\cal O}_{\Pic^{g-1}(C)}(\Theta)$ is $\hat{\hat\iota}$-invariant, so for any $\lambda\in \Pic^{g-1}(C)$ one has
$$\varphi(\hat\iota(\lambda))=\tau_{\hat\iota(\lambda)}^*[{\cal O}_{\Pic^g(C)}(\Theta)]=\tau_{\hat\iota(\lambda)}^*(\hat{\hat\iota}([{\cal O}_{\Pic^g(C)}(\Theta)])=\hat{\hat\iota}\left(\tau_{\lambda}^*[{\cal O}_{\Pic^{g-1}(C)}(\Theta)]\right)
$$
$$=\hat{\hat\iota}\left(\tau_{\lambda}^*[{\cal O}_{\Pic^{g-1}(C)}(\Theta)]\right)=\hat{\hat\iota}(\varphi(\lambda))\ ,
$$
which proves that $\varphi$ is Real.
\end{proof}

We are interested in the induced bijection
$$\pi_0(\varphi):\pi_0(\Pic^{g-1}(C)^{\hat\iota})\map \pi_0(\Pic^{u_C}(\Pic^0(C))^{\hat{\hat\iota}})\ .
$$

We know by Proposition \ref{FC} that $\pi_0(\Pic^{g-1}(C)^{\hat\iota})$ can be identified via the map $\mathcal{F}_C$ with the   subset of $H^1_{\Z_2}(C,\underline{S}^1(1))$ consisting of isomorphism classes of $\iota$-Real Hermitian line bundles $(L,\tilde\iota)$ with $\deg(L)=g-1$.
Using Theorem \ref{ClassOnKlein} we see that this subset can be identified with
$$ H^1(C^\iota,\Z_2)_{g-1}:=\{w\in H^1(C^\iota,\Z_2)|\ \deg_{\Z_2}(w)=g-1\hbox{ (mod 2)} \}\ .
$$

The composition of these identifications yields a bijection 
$$w_C^{g-1}:\pi_0(\Pic^{g-1}(C)^{\hat\iota})\to H^1(C^\iota,\Z_2)_{g-1}$$
 which can be explicitly described as follows: for a $\hat\iota$-invariant holomorphic line bundle ${\cal L}$ of degree $g-1$, we consider an anti-holomorphic $\iota$-Real structure $\tilde\iota$ on ${\cal L}$. Then $w_C^{g-1}$ maps the connected component of $[{\cal L}]$ in $\Pic^{g-1}(C)^{\hat\iota}$ to $w_1({\cal L}^{\tilde\iota})\in H^1(C^\iota,\Z_2)_{g-1}$.

Similarly,  the set $\pi_0(\Pic^{u_C}(\Pic^0(C))^{\hat{\hat\iota}})$ can be identified via the map $\mathcal{F}_{\Pic^0(C)}$ with the subset of the group $H^1_{\Z_2}(\Pic^0(C),\underline{S}^1(1))$ consisting of $\hat\iota$-Real Hermitian line bundles $({\cal M},\tilde{\hat\iota})$ on $\Pic^0(C)$ with $c_1({\cal M})=u_C$.  
Therefore, using the results in section \ref{ClassRLBTorus}, we see that the set  of $\hat\iota$-Real Hermitian line bundles $({\cal M},\tilde{\hat\iota})$ on $\Pic^0(C)=H^1(C,{\cal O})/H^1(C,\Z)$ with $c_1({\cal M})=u_C$ can be identified with
$$W(u_C)=\{w\in \Hom(H^1(C,\Z)^{-\iota^*},\Z_2)|\ \resto{w}{(\id-\iota^*)H^1(C,\Z)}=f_{u_C}\}\ .
$$
Note that the condition $\resto{w}{(\id-\iota^*)H^1(C,\Z)}=f_{u_C}$ simply means
\begin{equation}
w(\lambda-\iota^*(\lambda))=\langle \lambda,\iota^*(\lambda)\rangle\hbox{ (mod 2)}\ \ \forall\lambda\in H^1(C,\Z)\ .
\end{equation}

Composing these identifications we obtain a bijection
$$w_{\Pic^0(C)}^{u_C}:\pi_0(\Pic^{u_C}(\Pic^0(C))^{\hat{\hat\iota}})\to W({u_C})
$$
which can be explicitly described  as follows: for a $\hat{\hat\iota}$-invariant holomorphic line bundle ${\cal M}$ on $\Pic^0(C)$ with Chern class $u_C$ consider an anti-holomorphic $\hat\iota$-Real structure $\tilde{\hat\iota}$ on ${\cal M}$. Then $w_{\Pic^0(C)}^{u_C}$ maps the connected component of $[{\cal M}]$ in $ \Pic^{u_C}(\Pic^0(C))^{\hat{\hat\iota}}$ to $w_1(\resto{{\cal M}^{\tilde{\hat\iota}}}{T_0})$, where $T_0$ denotes (as in section \ref{ClassRLBTorus}) the standard connected component of the fixed point locus $\Pic^0(C)^{\hat\iota}$. 

Concluding, we obtain a diagram
\begin{equation}\label{dia} \begin{array}{ccc}
\pi_0(\Pic^{g-1}(C)^{\hat\iota})&\substack{ \simeq\\ \map\\\pi_0(\varphi)}&\pi_0(\Pic^{u_C}(\Pic^0(C))^{\hat{\hat\iota}})\vspace{2mm} \\
\ \ \ \simeq\   \downarrow {w_C^{g-1}}& &\ \ \ \simeq \ \downarrow w_{\Pic^0(C)}^{u_C}  \vspace{1mm} \\ 
 H^1(C^\iota,\Z_2)_{g-1}&\stackrel{\Phi}{\dasharrow}& W(u_C)
 \end{array}
 \end{equation} 
with  a bijective upper horizontal arrow  and bijective vertical arrows.
\begin{pr} The induced bijection $\Phi: H^1(C^\iota,\Z_2)_{g-1}\to W(u_C)$   is given by the following rule: 

For every $w\in  H^1(C^\iota,\Z_2)_{g-1}$, the element $\Phi(w)\in W(u_C)$ is the unique extension of $f_{u_C}$ satisfying  the equalities
$$\Phi(w)([C_i]^\vee)=\langle w,[C_i]_2\rangle +1 \ ,
$$
where $C_1,\dots C_r$ are the connected components of $C^\iota$.
\end{pr}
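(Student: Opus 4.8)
The plan is to reduce to the case of a $\hat\iota$-Real theta characteristic, which is settled by Theorem \ref{LK}, and to propagate the formula to every component of $\Pic^{g-1}(C)^{\hat\iota}$ by a translation argument. Since the diagram (\ref{dia}) commutes and its vertical arrows are bijections, for $w\in H^1(C^\iota,\Z_2)_{g-1}$ one computes $\Phi(w)$ by picking any $\hat\iota$-invariant $\lambda\in\Pic^{g-1}(C)$ with $w_C^{g-1}(\lambda)=w$ and setting $\Phi(w)=w_1(\resto{{\cal L}_\lambda^{\tilde{\hat\iota}}}{T_0})$, where ${\cal L}_\lambda={\cal O}_{\Pic^0(C)}(\Theta-\lambda)$. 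First I would fix a $\hat\iota$-Real theta characteristic $\kappa$ (which exists). As $\Pic^{g-1}(C)^{\hat\iota}$ is a torsor over the group $\Pic^0(C)^{\hat\iota}$, every such $\lambda$ is $\lambda=\kappa\otimes\lambda_0$ with $\lambda_0\in\Pic^0(C)^{\hat\iota}$, and additivity of the morphism $w$ of Proposition \ref{FC} gives $w(\lambda)=w(\kappa)+w(\lambda_0)$. Theorem \ref{LK} supplies the base case: $\Phi(w(\kappa))([C_i]^\vee)=\langle w(\kappa),[C_i]_2\rangle+1$.

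I would then compute the difference $\Phi(w(\lambda))-\Phi(w(\kappa))$. Both ${\cal L}_\lambda$ and ${\cal L}_\kappa$ have Chern class $u_C$ and ${\cal L}_\lambda=t_{\lambda_0}^*{\cal L}_\kappa$ is a translate, so ${\cal L}_\lambda\otimes{\cal L}_\kappa^{-1}$ is the flat $\hat\iota$-Real bundle $\phi_{u_C}(\lambda_0)$ obtained from $\lambda_0$ under the principal polarization $\phi_{u_C}:\Pic^0(C)\to\Pic^0(\Pic^0(C))$ attached to the cup form. By Proposition \ref{GiveSW} the value of $w_1(\resto{{\cal L}_\lambda^{\tilde{\hat\iota}}}{T_0})$ on $\gamma\in H^1(C,\Z)^{-\iota^*}$ is the $\hat\iota$-Real $u_C$-character of ${\cal L}_\lambda$ at $\gamma$ (the values being $\pm1$, conjugation is irrelevant); the transformation rule for Appell-Humbert data under translation (\cite{BL}, Lemma 2.3.3) shows that the quotient of the characters of ${\cal L}_\lambda$ and ${\cal L}_\kappa$ is exactly the character of $\phi_{u_C}(\lambda_0)$. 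Hence in $\Z_2$
$$\Phi(w(\lambda))([C_i]^\vee)-\Phi(w(\kappa))([C_i]^\vee)$$
equals the holonomy of the flat Real bundle $\phi_{u_C}(\lambda_0)$, restricted to $T_0$, along the loop $[C_i]^\vee\in H_1(T_0,\Z)=H^1(C,\Z)^{-\iota^*}$.

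The crux is to identify this holonomy with $\langle w(\lambda_0),[C_i]_2\rangle$, the $C_i$-holonomy of the flat $\iota$-Real bundle on $C$ determined by $\lambda_0$; this is where the self-duality of $\Pic^0(C)$ furnished by the principal polarization $u_C$ enters. Writing $\lambda_0=[v_0]$, the character of $\phi_{u_C}(\lambda_0)$ sends $[C_i]^\vee$ to $e^{2\pi i\, u_C(v_0,[C_i]^\vee)}$, whereas the $C_i$-holonomy of $\lambda_0$ on $C$ is $e^{2\pi i\,\langle v_0,[C_i]\rangle}$; since $[C_i]^\vee$ is the Poincar\'e dual of $[C_i]$ and $u_C$ is the cup form, Poincar\'e duality gives $u_C(v_0,[C_i]^\vee)=\langle v_0,[C_i]\rangle$, so the two agree. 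I expect this Poincar\'e-duality bookkeeping, together with the careful matching of the Real structures on $C$ and on $\Pic^0(C)$ under $\phi_{u_C}$, to be the main obstacle. Combining the three steps gives
$$\Phi(w(\lambda))([C_i]^\vee)=\langle w(\kappa),[C_i]_2\rangle+1+\langle w(\lambda_0),[C_i]_2\rangle=\langle w(\lambda),[C_i]_2\rangle+1 .$$
Finally, $\Phi(w(\lambda))=w_1(\resto{{\cal L}_\lambda^{\tilde{\hat\iota}}}{T_0})$ restricts to $f_{u_C}$ on $(\id-\iota^*)H^1(C,\Z)$ by Remark \ref{restriction}, and by Lemma \ref{InvGenByCircles} the classes $[C_i]^\vee$ generate $H^1(C,\Z)^{-\iota^*}$ modulo $(\id-\iota^*)H^1(C,\Z)$; hence the values just computed pin down $\Phi(w(\lambda))$ as the unique extension of $f_{u_C}$ with the stated values on the $[C_i]^\vee$, which is exactly the asserted description of $\Phi$.
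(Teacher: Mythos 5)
Your proposal is correct in outline, but it takes a genuinely different route from the paper's own proof. The paper's argument is three lines long: it invokes the Gross--Harris result that \emph{every} connected component of $\Pic^{g-1}(C)^{\hat\iota}$ contains ($2^g$) Real theta characteristics, so for a given $w\in H^1(C^\iota,\Z_2)_{g-1}$ one simply chooses a Real theta characteristic $\kappa$ lying in the component $(w_C^{g-1})^{-1}(w)$, writes $w=w(\kappa)$, observes $\varphi(\kappa)=[{\cal L}_\kappa]$, and reads the formula off from Theorem \ref{LK} -- no translation argument at all. You instead fix one Real theta characteristic, use that $\Pic^{g-1}(C)^{\hat\iota}$ is a $\Pic^0(C)^{\hat\iota}$-torsor, and propagate the formula to the other components by computing the change of $w_1(\resto{{\cal L}_\lambda^{\tilde{\hat\iota}}}{T_0})$ under translation, via the Appell--Humbert transformation rule, Proposition \ref{GiveSW}, and the Poincar\'e-duality identity $u_C(v_0,[C_i]^\vee)=\langle v_0,[C_i]\rangle$. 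What your route buys is independence from the per-component count of Real theta characteristics (you only need one to exist somewhere, which is the weaker fact already quoted in section \ref{ThetaLB}); what it costs is exactly the bookkeeping you flag: matching the Real structure on ${\cal L}_\lambda\otimes{\cal L}_\kappa^{-1}$ with the standard one on the flat bundle (harmless, since anti-holomorphic Real structures are unique up to $S^1$ and the resulting fixed real line bundles are all isomorphic), and verifying that the semicharacter of $\phi_{u_C}(\lambda_0)$ at $[C_i]^\vee$ equals the $C_i$-holonomy of $\lambda_0$, where sign and $e^{\pi i}$ versus $e^{2\pi i}$ normalizations must be tracked (they do work out mod $2$). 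Both endgames coincide: the values on the classes $[C_i]^\vee$ determine the extension of $f_{u_C}$ by Lemma \ref{InvGenByCircles}, so your conclusion agrees with the paper's.
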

\begin{proof}  Let $w\in H^1(C^\iota,\Z_2)_{g-1}$ and let $\Gamma:=(w_C^{g-1})^{-1}(w)\in \pi_0(\Pic^{g-1}(C)^{\hat\iota})$ be the corresponding connected   component of $\Pic^{g-1}(C)^{\hat\iota}$. We know that any connected  component of $\Pic^{g-1}(C)^{\hat\iota}$ contains $2^{g}$ Real theta characteristics (see \cite{GH} p. 169), in particular we can find a Real theta-characteristic $\kappa\in\Gamma$. Using the notations of section \ref{ThetaLB} we can write $w=w(\kappa)$.

Note  now that $\varphi(\kappa)=[{\cal L}_\kappa]$, where ${\cal L}_\kappa:={\cal O}_{\Pic^0(C)}(\Theta_\kappa)$ is the holomorphic line bundle associated with the divisor $\Theta_\kappa$ (see section \ref{ThetaLB}).

Therefore $\pi_0(\varphi)((w_C^{g-1})^{-1}(w))$ is the connected component of $[{\cal L}_\kappa]$ in the fixed point locus $\Pic^{u_C}(\Pic^0(C))^{\hat{\hat\iota}}$, and $\Phi(w)$ is the element of $W(u_C)$ defined by the Stiefel-Whitney class of $\resto{{\cal L}_\kappa^{\tilde\hat{\iota}}}{T_0}$, where $\tilde{\hat\iota}$ is the standard $\hat\iota$-Real structure of ${\cal L}_\kappa$ (see section \ref{ThetaLB}), and where $T_0$ is  the standard connected component of the fixed point locus $\Pic^0(C)^{\hat\iota}$. It suffices to apply Theorem \ref{LK}.
\end{proof}

\begin{co} \label{NiceCo} Let $p_0\in C^\iota$.  Then $\xi:=[{\cal O}_C((g-1)p_0)]\in\Pic^{g-1}(C)^{\hat\iota}$, and  
$$cw_0([{\cal L}_{\xi},\tilde{\hat\iota}])\in \mathrm{Alt}^2(H^1(C,\Z),\Z)^{(\iota^*)^*}\times_{\Hom((\id-\iota^*)H^1(C,\Z),\Z_2)} \Hom(H^1(C,\Z)^{-\iota^*},\Z_2)$$
 is $(u_C,w_{p_0})$, where $w_{p_0}\in W(u_C)$ is the unique extension of $f_{u_C}$ satisfying the equalities:
$$w_{p_0}([C_i]^\vee)=\left\{\begin{array}{ccc} 1&\rm if& p_0\not\in C_i\\
g\hbox{ (mod 2)}&\rm if& p_0\in C_i
\end{array}\right.  
$$
\end{co}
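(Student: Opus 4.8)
The plan is to read off $cw_0([{\cal L}_\xi,\tilde{\hat\iota}])$ from the commutative diagram (\ref{dia}) together with the Proposition just proved, thereby reducing the whole statement to a single geometric computation of a fixed-point Stiefel-Whitney class. Since $p_0\in C^\iota$, the effective divisor $(g-1)p_0$ is $\iota$-invariant, so $\xi=[{\cal O}_C((g-1)p_0)]$ indeed lies in $\Pic^{g-1}(C)^{\hat\iota}$ and ${\cal L}_\xi={\cal O}_{\Pic^0(C)}(\Theta_\xi)$ is a $\hat\iota$-Real line bundle with $c_1=u_C$; this already gives the first entry of the pair. For the second entry I would trace $\xi$ through (\ref{dia}): by definition $w_C^{g-1}$ sends the component of $\xi$ to $w:=w_1({\cal O}_C((g-1)p_0)^{\tilde\iota})\in H^1(C^\iota,\Z_2)_{g-1}$, where $\tilde\iota$ is the canonical $\iota$-Real structure attached to the invariant divisor $(g-1)p_0$; since $\varphi(\xi)=[{\cal L}_\xi]$ and the diagram commutes, we obtain $w_{p_0}=\Phi(w)$, and the preceding Proposition gives $\Phi(w)([C_i]^\vee)=\langle w,[C_i]_2\rangle+1$.

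Thus everything reduces to computing $\langle w,[C_i]_2\rangle=\langle w_1({\cal O}_C((g-1)p_0)^{\tilde\iota}),[C_i]_2\rangle$ for each component $C_i$. Here I would use the tautological holomorphic section $s$ of ${\cal O}_C((g-1)p_0)$ with $\mathrm{div}(s)=(g-1)p_0$. Because this divisor is $\iota$-invariant and supported at the real point $p_0$, the section $s$ is fixed by the canonical Real structure $\tilde\iota$ (the normalisation is irrelevant, since replacing $\tilde\iota$ by $z\tilde\iota$, $z\in S^1$, only rotates the fixed real subbundle by a global complex-linear isomorphism and leaves $w_1$ unchanged), so over $C^\iota$ it restricts to a genuine section of the real line bundle ${\cal O}_C((g-1)p_0)^{\tilde\iota}$.

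Restricted to a component $C_i$ this real section is nowhere zero when $p_0\notin C_i$, and vanishes exactly at $p_0$ to order $g-1$ when $p_0\in C_i$. Choosing a holomorphic coordinate $z$ near $p_0$ with $C_i$ locally equal to the real axis $\{\Im z=0\}$, the section equals $z^{g-1}$ times a unit that is real on $C_i$, so it changes sign an odd number of times along $C_i$ precisely when $g-1$ is odd. Invoking the holonomy interpretation of Remark \ref{HolSW} — namely that $w_1$ of a real line bundle over a circle equals the mod-$2$ number of sign changes (transverse zeros) of a real section — I conclude $\langle w,[C_i]_2\rangle\equiv g-1\pmod 2$ when $p_0\in C_i$ and $\equiv 0$ otherwise.

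Feeding this into $\Phi(w)([C_i]^\vee)=\langle w,[C_i]_2\rangle+1$ yields the value $1$ for $p_0\notin C_i$ and $g\pmod 2$ for $p_0\in C_i$, which is exactly the asserted formula; finally, that $w_{p_0}$ is the unique extension of $f_{u_C}$ taking these values follows just as in Theorem \ref{LK}, since by Lemma \ref{InvGenByCircles} the classes $[C_i]^\vee$ generate $H^1(C,\Z)^{-\iota^*}$ modulo $(\id-\iota^*)H^1(C,\Z)$. The step I expect to require the most care is the identification of the canonical section with a section of the real subbundle over each $C_i$, together with the sign-change computation of $w_1$; the rest is bookkeeping through the diagram.
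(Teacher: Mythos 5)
Your argument is correct and follows exactly the route the paper intends: the corollary is read off from the Proposition preceding it via $\Phi(w)([C_i]^\vee)=\langle w,[C_i]_2\rangle+1$, with $w=w_1\bigl({\cal O}_C((g-1)p_0)^{\tilde\iota}\bigr)$ computed from the canonical $\tilde\iota$-fixed section vanishing to order $g-1$ at $p_0$, giving $\langle w,[C_i]_2\rangle\equiv g-1$ or $0$ according to whether $p_0\in C_i$. Your explicit justification of the sign-change count and of the independence of $w_1$ from the $S^1$-normalisation of $\tilde\iota$ supplies details the paper leaves implicit, but the approach is the same.
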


Note that for $\xi:=[{\cal O}_C((g-1)p_0)]\in\Pic^{g-1}(C)^{\hat\iota}$ the Real line bundle $({\cal L}_{\xi},\tilde{\hat\iota})$ is just the Real line bundle ${\cal O}_{\Pic^0(C)}(\Theta-[{\cal O}_C((g-1)p_0)])$ considered in section \ref{famDirac}. According to Proposition \ref{OrSymm} the Stiefel-Whitney of the associated  real line bundle on $\Pic^0(C)^{\hat\iota}$ controls the orientability of the components of $S^d(C)^\iota$ (for $d>2(g-1)$). Therefore Corollary  \ref{NiceCo} together with the difference formula given by Proposition \ref{difference} solves completely the orientability problem formulated in section \ref{RGLSM}.
\\

We conclude with our final result which solves completely the problems formulated in the introduction and in section \ref{famDirac} about Real determinant line bundles of families of Dolbeault operators:
\begin{thry} Let $(C,\iota)$ be a Klein  surface with $C^\iota\ne\emptyset$, $L$   a differentiable line bundle of degree $d$ on $C$, and $[\kappa]\in\Pic^{g-1}(C)^{\hat\iota}$ a  Real theta characteristic. Fix a point $p_0\in C^\iota$,  denote by $\delta^L_{p_0}$, $\bar\partial_{\kappa,p_0}$ the corresponding families of Dolbeault operators parameterized by $\Pic^d(C)$ and $\Pic^0(C)$ respectively, and by  $\det\mathrm{ind}\ \delta_{p_0}^{L}$,  $\det\mathrm{ind}\ \bar\partial_{\kappa,p_0}$  the corresponding determinant line bundles endowed with the $\hat\iota$-Real structures given by Remarks \ref{RealStr1}, \ref{RealStr2}.
\begin{enumerate}
 
\item The element  
$$cw_0([\det\mathrm{ind}\ \bar\partial_{\kappa,p_0}])\in $$
$$  \mathrm{Alt}^2(H^1(C,\Z),\Z)^{(\iota^*)^*}\times_{\Hom((\id-\iota^*)H^1(C,\Z),\Z_2)} \Hom(H^1(C,\Z)^{-\iota^*},\Z_2)$$
  is $(u_C,w_\kappa)$, where $w_\kappa:H^1(C,\Z)^{-\iota^*}\to\Z_2$ is the element of $W(u_C)$ defined as   the unique extension of $f_{u_C}:(\id-\iota^*)H^1(C,\Z)\to\Z_2$ satisfying the equalities
$$
w_\kappa([C_i]^\vee)=\langle w(\kappa),[C_i]\rangle+1\ \hbox{ (mod 2)} \ .$$
\item Let $\tau_{{\cal O}_C(dp_0)}:(\Pic^0(C),\hat\iota)\to(\Pic^d(C),\hat\iota)$ be the isomorphism of Real complex manifolds defined by tensorizing with ${\cal O}_C(dp_0)$. Then the element   
$$cw_0([\left\{\tau_{{\cal O}_C(dp_0)}\right\}^*(\det\mathrm{ind}\ \delta_{p_0}^{L})])\in$$
$$  \mathrm{Alt}^2(H^1(C,\Z),\Z)^{(\iota^*)^*}\times_{\Hom((\id-\iota^*)H^1(C,\Z),\Z_2)} \Hom(H^1(C,\Z)^{-\iota^*},\Z_2)$$
  is $(u_C,w_{p_0})$, where $w_{p_0}\in W(u_C)$ is the unique extension of $f_{u_C}$ satisfying the equalities:
$$w_{p_0}([C_i]^\vee)=\left\{\begin{array}{ccc} 1&\rm if& p_0\not\in C_i\\
g\hbox{ (mod 2)}&\rm if& p_0\in C_i
\end{array}\right.  
$$
\end{enumerate}
\end{thry}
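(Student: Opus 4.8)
The plan is to deduce both assertions directly from the explicit computations already carried out for the geometric theta line bundles, by identifying each Real determinant line bundle with the appropriate theta bundle \emph{as an element of} $H^1_{\Z_2}(\Pic^0(C),\underline{S}^1(1))$. The key structural remark is that $cw_0$ is by construction an invariant of the isomorphism class of a $\hat\iota$-Real line bundle in this Grothendieck group; hence it suffices, in each case, to exhibit an isomorphism of $\hat\iota$-Real line bundles between the determinant bundle and the corresponding theta bundle, and then quote the value of $cw_0$ on the latter. No new computation is required: all the genuine work has been done in the preceding sections.

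For part (1) I would start from Remark \ref{RealStr2}, which already provides an isomorphism
$$\det\mathrm{ind}\ \bar\partial_{\kappa,p_0}\simeq {\cal O}_{\Pic^0(C)}(\Theta-[\kappa])$$
of $\hat\iota$-Real line bundles on $\Pic^0(C)$. In the notation of section \ref{ThetaLB} the right-hand side is precisely ${\cal L}_\kappa$ endowed with the natural $\hat\iota$-Real structure $\tilde{\hat\iota}$ coming from the Real divisor $\Theta_\kappa$. Since $cw_0$ factors through $H^1_{\Z_2}(\Pic^0(C),\underline{S}^1(1))$, this isomorphism gives $cw_0([\det\mathrm{ind}\ \bar\partial_{\kappa,p_0}])=cw_0([{\cal L}_\kappa,\tilde{\hat\iota}])$, and Theorem \ref{LK} evaluates the latter as $(u_C,w_\kappa)$ with $w_\kappa$ the prescribed extension of $f_{u_C}$ satisfying $w_\kappa([C_i]^\vee)=\langle w(\kappa),[C_i]\rangle+1$. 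This yields part (1) verbatim.

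Part (2) proceeds identically, with Remark \ref{RealIso} replacing Remark \ref{RealStr2}: that remark furnishes an isomorphism
$$\left\{\otimes {\cal O}_C(dp_0)\right\}^*(\det\mathrm{ind}\ \delta_{p_0}^L)\simeq {\cal O}_{\Pic^0(C)}(\Theta-[{\cal O}_C((g-1)p_0)])$$
of $\hat\iota$-Real line bundles. Setting $\xi:=[{\cal O}_C((g-1)p_0)]\in\Pic^{g-1}(C)^{\hat\iota}$, the target is exactly ${\cal L}_\xi$, so $cw_0([\{\otimes {\cal O}_C(dp_0)\}^*(\det\mathrm{ind}\ \delta_{p_0}^L)])=cw_0([{\cal L}_\xi,\tilde{\hat\iota}])$, and Corollary \ref{NiceCo} identifies this with $(u_C,w_{p_0})$, where $w_{p_0}$ takes the value $1$ on $[C_i]^\vee$ when $p_0\notin C_i$ and the value $g$ (mod $2$) when $p_0\in C_i$.

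The one point deserving care — and the step I would flag as the substantive one — is the verification that the $\hat\iota$-Real structure carried intrinsically by each determinant line bundle (Remarks \ref{RealStr1}, \ref{RealStr2}) coincides, as a class in $H^1_{\Z_2}$, with the Real structure on the theta bundle induced by its defining Real divisor. This is exactly what Remarks \ref{RealStr2} and \ref{RealIso} assert, resting on the observation that any two anti-holomorphic $\hat\iota$-Real structures $\tilde\iota_1,\tilde\iota_2$ on a fixed holomorphic line bundle differ by a constant, $\tilde\iota_2=\zeta\tilde\iota_1$ with $\zeta\in S^1$, and are therefore isomorphic as Real line bundles: a constant gauge transformation $c$ with $c^2=\zeta$ satisfies $c\,\tilde\iota_1=\zeta\bar c\,\tilde\iota_1=\tilde\iota_2\,c$ and intertwines them. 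Once this identification of Real classes is in hand, the theorem is a pure assembly of Theorem \ref{LK} and Corollary \ref{NiceCo}, with all the analytic and combinatorial content — the Appell-Humbert computation of $\chi_\kappa$ in Proposition \ref{AlphaKappa}, the Johnson--Libgober identification underlying Theorem \ref{main}, and the difference formula of Proposition \ref{difference} — already secured in the earlier sections.
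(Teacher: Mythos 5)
Your proposal is correct and follows essentially the same route as the paper, whose own proof consists precisely of citing Remark \ref{RealStr2} together with Theorem \ref{LK} for part (1) and Remark \ref{RealIso} together with Corollary \ref{NiceCo} for part (2) (the paper's printed proof transposes the two remark labels, which your version gets right). Your extra paragraph justifying that any two anti-holomorphic Real structures on the same holomorphic line bundle are intertwined by a constant gauge transformation is exactly the observation the paper relies on when formulating those remarks, so nothing is missing.
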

\begin{proof}  
1. This follows   Remark \ref{RealIso} and   Theorem \ref{LK}.
\\ \\
2. This follows   Remark \ref{RealStr2} and Corollary \ref{NiceCo}.
\end{proof}
{\ }\\
{\bf Example:}  Let $(C,\iota)$ be a Real curve of genus 1 with $a(C,\iota)=0$ and $r=2$. Let $C_1$, $C_2$ be the two connected components of $C^\iota$, and let $\Gamma$ be an $\iota$-invariant embedded circle such that $\resto{\iota}{\Gamma}$ is orientation reversing, $[C_1]\cdot[\Gamma]=1$ and $H_1(C,\Z)=\langle [C_1],[\Gamma]\rangle$. The induced involutions on $H_1(C,\Z)$, $H^1(C,\Z)$ are given  by
$$\iota_*([C_1])=[C_1]\ ,\ \iota_*([\Gamma])=-[\Gamma]\ ,\ \iota^*([C_1]^\vee)=-[C_1]^\vee\ ,\ \iota^*([\Gamma]^\vee)=[\Gamma]^\vee.
$$

Identifying $\Pic^0(X)$ with  $H^1(C,{\cal O}_C)/H^1(C,\Z)\simeq [H^1(C,\Z)\otimes_\Z\R]/H^1(C,\Z)$ we see that the involution $\hat\iota$ defined in the introduction is the $\R$-linear extension of  $-\iota^*:  H^1(C,\Z)\to  H^1(C,\Z)$. Therefore 
$$\Pic^0(C)^{\hat\iota}=T_0\cup T_1\hbox{ where } T_0:=\R[C_1]^\vee/\Z[C_1]^\vee ,\ T_1:= \frac{1}{2}[\Gamma]^\vee+ T_0  \ .
$$

Using the notations and  ideas of section \ref{RGLSM} we compute the Stiefel-Whitney class $w_1((\det\mathrm{ind}\ \delta_{p_0}^L)^{\tilde{\hat\iota}_d})$ using the identification $\left\{\tau_{{\cal O}_C(dp_0)}\right\}^*:\Pic^d(C)\to\Pic^0(C)$, Remark \ref{RealIso}  and    Corollary \ref{NiceCo}. Denoting by $T_0'$, $T_1'$ the images of $T_0$, $T_1$ in $\Pic^d(C)^{\hat\iota}$ we obtain
$$w_1\left(\resto{(\det\mathrm{ind}\ \delta_{p_0}^L)^{\tilde{\hat\iota}_d}}{T_0'} \right)([C_1]^\vee)=1\ ,
$$
and using the difference formula Proposition  \ref{difference} we get
$$w_1\left(\resto{(\det\mathrm{ind}\ \delta_{p_0}^L)^{\tilde{\hat\iota}_d}}{T_1'} \right)([C_1]^\vee)=1+u_C([\Gamma]^\vee,[C_1]^\vee])=0\ .
$$
By Proposition \ref{OrSymm} proved in section \ref{RGLSM} we conclude

\begin{pr}\label{Beispiel} Let $(C,\iota)$ be a Real curve of genus 1 with $a(C,\iota)=0$ and $r=2$, and let $d\geq 2$. Then $S^d(C)^\iota$ has two connected components $S^d(C)_{T_0'}^\iota$, $S^d(C)_{T_1'}^\iota$. When $d$ is odd, both components are non-orientable. If $d$ is even, $S^d(C)_{T_0'}^\iota$ is non-orientable and  $S^d(C)_{T_1'}^\iota$ is orientable.
\end{pr}

For instance, when $d=2$ the first component $S^d(C)_{T_0'}^\iota$ is obtained from   $C/\langle\iota\rangle\simeq [0,1]\times S^1$ by gluing two Möbius bands along the two boundary components, so it is non-orientable. The second component $S^d(C)_{T_1'}^\iota$ is just $C_1\times C_2$, so it is orientable.

\section{Appendix}

\centerline{\bf Identities for the Stiefel-Whitney  numbers of}
\vspace{1mm}
\centerline{ \bf  Real vector bundles} 
 \vspace{6mm} 

Let $(X,\iota)$ be a closed, connected, oriented  differentiable $n$-manifold  endowed with an involution $\iota$, $X^\iota$ the fixed point locus of $\iota$, and let $(E,\tilde\iota)$ be a $\iota$-Real  vector bundle of rank $r$ on $X$.   We denote by $_\R E $ the underlying real bundle of $E$, and by $E^{\tilde\iota}$ the fixed point locus of $\tilde\iota$, regarded as a real bundle of rang $r$ on $X^\iota$.

\begin{re}\label{standard} Let $X\supset Y\textmap{\theta} X^\iota$ be the projection (with $\Z_2$-invariant  fibers) of a sufficiently small tubular neighborhood  $Y$ of $X^\iota$. Then the restriction $\resto{(E,\tilde\iota)}{Y}$ can be identified with $\theta^*(E^{\tilde\iota})\otimes\C$  endowed with the involution defined by conjugation.
\end{re} 
\begin{proof} The map of $\Z_2$-spaces $\theta:Y\to X^\iota$ is a $\Z_2$-homotopy equivalence, so the result follows from \cite{A1}.
\end{proof}

Suppose $n$ is even, and  let $\cg=\Pi_{i=1}^r c_i^{k_i}$ be a Chern monomial  of degree  $2\sum  i k_i=n$, and let $\wg=\Pi_{i=1}^r w_{2i}^{k_i}$ be the corresponding Stiefel-Whitney  monomial.
We will show that the  Stiefel-Whitney number 
$$\langle \wg(_\R E),[X]_2\rangle\equiv \langle \cg(E),[X]\rangle  \hbox{ (mod 2)}\ $$
can be computed by a polynomial expression in the Stiefel-Whitney classes of the real bundle $E^{\tilde \iota}$ over $X^\iota$ and of the normal bundle of $X^\iota$ in $X$.

Consider the $(n+1)$-dimensional fiber  product $Q:=X\times_{\Z_2} S^1$, where $\Z_2$ acts on $X$ via $\iota$ and on $S^1$ via the antipodal involution. This fiber product can be regarded as the total space of the locally trivial bundle $p:Q\to \P^1_\R$  with fiber $X$ associated with the principal $\Z_2$-bundle $S^1\to\P^1_\R$. By definition  $Q$ can be also regarded as the base of a principal $\Z_2$-bundle $q:X\times S^1\to X$. We define a $\Z_2=\{\pm1\}$-action on $Q$   by
$$(-1)\cdot [x,x_0,x_1]):=[x,x_0,-x_1]=[\iota( x),- x_0,x_1]\ .
$$
This action  lifts the $\Z_2$-action on $\P^1_\R$ induced by the standard orientation reversing reflection with fixed points $0=[1,0]$, $\infty=[0,1]\in \P^1_\R$. $Q$ decomposes as a union of two $\Z_2$-invariant open set $Q_0\supset p^{-1}(0)$, $Q_\infty\supset p^{-1}(\infty)$ both diffeomorphic to $X\times\R$ via the maps 
$$(x,x_1)\stackrel{j_0}{\mapsto} [x,1,x_1]\ ,\ (x,x_0)\stackrel{j_\infty}\mapsto [x,x_0,1]\ .$$
 The induced actions on $X\times\R$ are
$$ (x,x_1)\mapsto (x, -x_1)\ ,\  (x,x_0)\mapsto (\iota( x),-x_0)\ .
$$

The fixed point locus  $Q^{\Z_2}$ of the  $\Z_2$-action on $Q$ decomposes as
$$Q^{\Z_2}=Q_0^{\Z_2}\cup Q_\infty^{\Z_2}\ ,
$$
where $Q^{\Z_2}_0=p^{-1}(0)$ is naturally isomorphic with $X$, and $Q_\infty^{\Z_2}\subset p^{-1}(\infty)$ can be identified with $X^\iota$. The normal bundle of $Q^{\Z_2}_0\simeq X$ in $Q$ is $X\times \R(1)$  (trivial line bundle with the $\Z_2$-action induced by $-\id$), whereas the normal bundle of $Q_\infty^{\Z_2}\simeq X^\iota$ in $Q$  is  $N_{X^\iota/X}(1)\times\R(1)$.

Let $U:=U_0\coprod U_\infty$ be an open $\Z_2$-equivariant tubular neighborhood of $Q^{\Z_2}$. The quotient
$$W:=\qmod{[Q\setminus U]}{\Z_2}
$$
is a compact manifold with boundary
$$\partial W=X\coprod \P_\R(N_{X^\iota/X}\oplus\underline\R) 
$$
consisting of $X$ and  of the real projectivization of the normal bundle of $X^\iota$ in $Q$. The bundle $p_1^*(_\R E)$ on $X\times S^1$ comes with an obvious $\Z_2$-action and descends to $Q$ via $q$, because $q$ is the quotient with respect to a free $\Z_2$-action. The total space  of the descended bundle is the fiber product $F=E\times_{\Z_2} S^1$, where $\Z_2$ acts on $E$ via $\tilde\iota$.  The pull-backs  $j_0^*(F)$, $j_\infty^*(F)$ can both be  identified with $p_X^*(E)=E\times\R$. The $\Z_2$-action  on $Q$ can be lifted to $F$ using the formula
$$(-1)\cdot [e,x_0,x_1]):=[e,x_0,- x_1]=[\tilde\iota(e),- x_0,x_1]\ ,
$$
and the induced actions on   $j_0^*(F)=E\times\R$, $j_\infty^*(F)=E\times\R$ are
$$(e,x_1)\mapsto (e,- x_1)\ ,\ (e,x_0)\mapsto (\tilde\iota(e),- x_0)\ .
$$

Using this lift we see that   the restriction $\resto{F}{Q\setminus  U}$ descends to $W$. Using the identifications  $Q_0^{\Z_2}=X$, $Q_\infty^{\Z_2}=\P_\R(N_{X^\iota/X}\oplus\underline{\R})$,  denoting by $\chi$ the tautological real line bundle on this projective bundle, and by $\pi$ the projection $\P_\R(N_{X^\iota/X}\oplus\underline{\R})\to X^\iota$, we see that the restrictions of the obtained bundle $\bar F$  to the two parts of $\partial W$  are
$$\resto{\bar F}{Q_0^{\Z_2}}=E\ ,\ \resto{\bar F}{Q_\infty^{\Z_2}}=\pi^*(E^{\tilde\iota})\oplus [\pi^*(E^{\tilde\iota})\otimes\chi]\ .
$$
For the second formula we used Remark \ref{standard} and the obvious $\R$-isomorphism of   $\Z_2$-bundles $E^{\tilde\iota}\otimes\C\simeq E^{\tilde\iota}\oplus E^{\tilde\iota}(1)$.
\\
\\ 
Applying the  Whitney formula one can decompose:
$$\wg\left(\pi^*(E^{\tilde\iota})\oplus [\pi^*(E^{\tilde\iota})\otimes\chi]\right)=a_0w_1(\chi)^{n} +\sum_{0< ij\leq n } a_{ij} w_i^j(\pi^*(E^{\tilde\iota}))w_1(\chi)^{n-ij}\ .
$$
Suppose that   $X^\iota$ has constant codimension $k$ at any point, and  denote by $\eta$ the real $(k+1)$-bundle $N_{X^\iota/X}\oplus\underline{\R}$ on $X^\iota$. Note that $w_i(\eta)=w_i(N_{X^\iota/X})$, in particular $w_{k+1}(\eta)=0$. We have 
$$\pi_*(w_1(\chi)^{k+l})=s_l(\eta), 
$$
where $s(\eta)=\sum_{l=0}^\infty s_l(\eta)=w(\eta)^{-1}$.  Hence we obtain
$$\pi_*\left\{\wg\left(\pi^*(E^{\tilde\iota})\oplus [\pi^*(E^{\tilde\iota})\otimes\chi]\right)\right\}=a_0 s_{n-k}(\eta)+\sum_{l=1}^{n-k} \sum_{ij=l} a_{ij} w_i^j(E^{\tilde\iota}) s_{n-k-l}(\eta)\ .
$$

Regarding $W$ as a  homology equivalence between its boundary parts $X$ and $\P_\R(N_{X^\iota/X}\oplus\underline{\R})$, and using the identity
$$\langle \sigma, [\P_\R(N_{X^\iota/X}\oplus\underline{\R})]_2\rangle=\langle \pi_*(\sigma), [X^\iota]_2\rangle
$$
we get the following {\it localization formula}:
\begin{thry} Let $(X,\iota)$ be a closed, connected, oriented  differentiable $n$-manifold  endowed with an involution $\iota$, $X^\iota$ the fixed point locus of $\iota$, and let $(E,\tilde\iota)$ be a $\iota$-Real   vector bundle  of rank $r$ on $X$. Then for every Chern monomial $\cg$ of degree $n$ with corresponding Stiefel-Whitney monomial $\wg$ we have
$$\left\langle \wg(_\R E),[X]_2\right\rangle=\left\langle a_0 s_{n-k}(N_{X^\iota/X})+\sum_{l=1}^{n-k} \sum_{ij=l} a_{ij} w_i^j(E^{\tilde\iota}) s_{n-k-l}(N_{X^\iota/X}),[X^\iota]_2\right\rangle\ .
$$
\end{thry}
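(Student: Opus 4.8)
The plan is to read off the formula from the closed manifold-with-boundary $W=[Q\setminus U]/\Z_2$ constructed above, viewing it as a $\Z_2$-cobordism over which the real bundle $\bar F$ is globally defined. The essential output of the construction is that $\bar F$ is a genuine real vector bundle on all of $W$ whose restrictions to the two boundary components $X$ and $\P_\R(N_{X^\iota/X}\oplus\underline{\R})$ are $_\R E$ and $\pi^*(E^{\tilde\iota})\oplus[\pi^*(E^{\tilde\iota})\otimes\chi]$ respectively. First I would record the elementary homological consequence: since $\partial W=X\coprod\P_\R(\eta)$ with $\eta:=N_{X^\iota/X}\oplus\underline\R$, the inclusion-induced maps satisfy $i_{X*}[X]_2=i_{P*}[\P_\R(\eta)]_2$ in $H_n(W,\Z_2)$, because a boundary represents $0$. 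Applying the projection formula to the class $\alpha:=\wg(\bar F)\in H^n(W,\Z_2)$, together with $i_X^*\alpha=\wg(_\R E)$ and $i_P^*\alpha=\wg(\pi^*(E^{\tilde\iota})\oplus[\pi^*(E^{\tilde\iota})\otimes\chi])$, yields the key identity
$$\langle \wg(_\R E),[X]_2\rangle=\langle \wg\big(\pi^*(E^{\tilde\iota})\oplus[\pi^*(E^{\tilde\iota})\otimes\chi]\big),[\P_\R(\eta)]_2\rangle\ .$$

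The second step is to evaluate the right-hand side by pushing down along $\pi:\P_\R(\eta)\to X^\iota$. I would expand the Stiefel-Whitney monomial by the Whitney formula, which is precisely the definition of the coefficients $a_0,a_{ij}$, and then apply the Gysin homomorphism $\pi_*$ for this real projective bundle of a rank-$(k+1)$ bundle, using the pushforward rule $\pi_*(w_1(\chi)^{k+l})=s_l(\eta)$ with $s(\eta)=w(\eta)^{-1}$ and $\pi_*(w_1(\chi)^j)=0$ for $j<k$. Combined with the projection formula $\pi_*(\pi^*(\beta)\,w_1(\chi)^{m})=\beta\cdot s_{m-k}(\eta)$ and the identity $\langle\sigma,[\P_\R(\eta)]_2\rangle=\langle\pi_*\sigma,[X^\iota]_2\rangle$ already noted before the theorem, the right-hand side collapses (after the substitution $l=ij$, with $s_m(\eta)=0$ for $m<0$ cutting the range to $1\le l\le n-k$) to exactly
$$\langle a_0\,s_{n-k}(\eta)+\sum_{l=1}^{n-k}\sum_{ij=l} a_{ij}\,w_i^j(E^{\tilde\iota})\,s_{n-k-l}(\eta),\,[X^\iota]_2\rangle\ ,$$
which is the asserted localization formula.

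I expect the genuine work to lie not in this bookkeeping but in the facts already assembled before the statement, so the proof proper is short. The most delicate of these is the identification of $\resto{\bar F}{Q_\infty^{\Z_2}}$, which rests on Remark \ref{standard} identifying the Real bundle near $X^\iota$ with $\theta^*(E^{\tilde\iota})\otimes\C$ under conjugation, together with the $\R$-isomorphism of $\Z_2$-bundles $E^{\tilde\iota}\otimes\C\simeq E^{\tilde\iota}\oplus E^{\tilde\iota}(1)$; it is the summand carrying the nontrivial $\Z_2$-action that, after projectivizing the normal directions, produces the twist by $\chi$. One must also check that the lifted action on $F$ is free off the fixed locus so that $\bar F$ descends to a bundle on $W$, and confirm the pushforward rule with $\Z_2$-coefficients, which follows immediately from the Leray--Hirsch description of $H^*(\P_\R(\eta);\Z_2)$ as the free $H^*(X^\iota;\Z_2)$-module on $1,w_1(\chi),\dots,w_1(\chi)^k$ subject to $\sum_{i=0}^{k+1}w_i(\eta)\,w_1(\chi)^{k+1-i}=0$.
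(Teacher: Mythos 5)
Your proposal is correct and follows essentially the same route as the paper: the cobordism $W=[Q\setminus U]/\Z_2$ carrying the descended bundle $\bar F$ forces the two boundary characteristic numbers to agree mod 2, and the right-hand side is then computed by the Gysin pushforward $\pi_*(w_1(\chi)^{k+l})=s_l(\eta)$ together with the projection formula. Your phrasing of the cobordism step via $i_{X*}[X]_2=i_{P*}[\P_\R(\eta)]_2$ in $H_n(W,\Z_2)$ is a slightly more careful rendering of what the paper calls ``regarding $W$ as a homology equivalence between its boundary parts,'' but it is the same argument.
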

\vspace{1mm}{\ }\\
{\bf Example:} $n=2$, $k=1$, $\wg=w_2$. In this case one has: 
$$w_2\left(\pi^*(E^{\tilde\iota})\oplus [\pi^*(E^{\tilde\iota})\otimes\chi]\right)=w_1(\pi^*(E^{\tilde\iota}))(w_1(\pi^*(E^{\tilde\iota}))+r w_1(\chi))+ w_2(\pi^*(E^{\tilde\iota})\otimes\chi)+$$
$$+  w_2(\pi^*(E^{\tilde\iota}))=  w_1(\pi^*(E^{\tilde\iota}))w_1(\chi)+\frac{r(r-1)}{2}w_1^2(\chi)+ w_1(\pi^*(E^{\tilde\iota}))^2=$$
$$=w_1(\pi^*(E^{\tilde\iota}))w_1(\chi)+\frac{r(r-1)}{2}w_1^2(\chi)\ .
$$

Here we used have the general formula:
$$w_2(F\otimes\chi)=w_2(F)+ (r-1) w_1(F)w_1(\chi)+\frac{r(r-1)}{2}w_1^2(\chi)\ .
$$
Therefore

\begin{co}  Let $(E,\tilde\iota)$ be a $\iota$-Real   vector bundle  of rank $r$ on  a closed  Real 2-manifold
$(X,\iota)$ with $X^\iota$ of codimension 1. Then
$$\langle w_2(_\R E),[X]_2\rangle =\langle w_1(E^{\tilde\iota})+\frac{r(r-1)}{2} w_1(N_{X^\iota/X}),  [X^\iota]_2 \rangle  \ .
$$ 
\end{co}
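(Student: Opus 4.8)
The plan is to read off the Corollary as the specialization of the general localization Theorem to $n=2$, $k=1$ and Stiefel--Whitney monomial $\wg=w_2$ (whose associated Chern monomial is $\cg=c_1$, of degree $n=2$). First I would note that with $n-k=1$ the localization formula degenerates to just two terms: the $a_0$-term $a_0\,s_{1}(\eta)$, and the single summand indexed by $l=1$, i.e.\ $(i,j)=(1,1)$, which contributes $a_{1,1}\,w_1(E^{\tilde\iota})\,s_0(\eta)$.

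Next I would extract the coefficients $a_0$ and $a_{1,1}$ from the Example computation immediately preceding the statement, which expands the Whitney product as
$$\wg\bigl(\pi^*(E^{\tilde\iota})\oplus[\pi^*(E^{\tilde\iota})\otimes\chi]\bigr)=w_1(\pi^*(E^{\tilde\iota}))\,w_1(\chi)+\frac{r(r-1)}{2}\,w_1(\chi)^2\ .$$
Matching this against the generic decomposition $a_0\,w_1(\chi)^n+\sum_{0<ij\le n}a_{ij}\,w_i^j(\pi^*(E^{\tilde\iota}))\,w_1(\chi)^{n-ij}$ identifies $a_0=\frac{r(r-1)}{2}$, while the unique contributing coefficient subject to the degree bound $ij\le n-k=1$ is $a_{1,1}=1$; every higher term is suppressed.

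The final step is the Gysin pushforward $\pi_*$ along $\pi\colon\P_\R(N_{X^\iota/X}\oplus\underline\R)\to X^\iota$. Writing $\eta:=N_{X^\iota/X}\oplus\underline\R$, a rank $k+1=2$ bundle with $w(\eta)=w(N_{X^\iota/X})$, one has $s(\eta)=w(\eta)^{-1}$, hence $s_0(\eta)=1$ and, modulo $2$, $s_1(\eta)=w_1(\eta)=w_1(N_{X^\iota/X})$. Feeding $\pi_*(w_1(\chi)^{k+l})=s_l(\eta)$ together with the coefficients above into the localization formula yields
$$\langle w_2(_\R E),[X]_2\rangle=\Bigl\langle w_1(E^{\tilde\iota})+\frac{r(r-1)}{2}\,w_1(N_{X^\iota/X}),\,[X^\iota]_2\Bigr\rangle\ ,$$
as claimed. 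All the genuine content resides in the general localization Theorem; given that, the only places that will demand care are the mod-$2$ evaluation $s_1(\eta)=w_1(N_{X^\iota/X})$ and the bookkeeping ensuring that $\wg=w_2$ is paired with $\cg=c_1$ so that both sides carry cohomological degree $n=2$. I therefore expect no serious obstacle in this corollary beyond faithful substitution into the already-established formula.
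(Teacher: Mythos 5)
Your proposal is correct and is essentially the paper's own argument: the paper proves the Corollary precisely by the Example computation of $\wg\bigl(\pi^*(E^{\tilde\iota})\oplus[\pi^*(E^{\tilde\iota})\otimes\chi]\bigr)$ for $\wg=w_2$ (using the tensor-product formula for $w_2(F\otimes\chi)$ and the vanishing of $w_1(\pi^*(E^{\tilde\iota}))^2$, which your degree bound handles equivalently), followed by the pushforward $\pi_*(w_1(\chi)^{k+l})=s_l(\eta)$ with $s_0=1$, $s_1(\eta)=w_1(N_{X^\iota/X})$ in the localization Theorem. No gaps; the substitution is exactly as you describe.
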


\vspace{3mm}

\end{document}